\newtheorem{thm}{Theorem}[section]
\newtheorem{lem}[thm]{Lemma}
\newtheorem{cor}[thm]{Corollary}
\newtheorem{pro}[thm]{Proposition}
\theoremstyle{definition}
\newtheorem{ex}[thm]{Example}
\newtheorem{rmk}[thm]{Remark}
\newtheorem{defi}[thm]{Definition}
\newcommand{\nc}{\newcommand}
\newcommand{\delete}[1]{}
\newcommand{\CYB}{\operatorname{CYB}}
\newcommand{\Alt}{\operatorname{Alt}}
\nc{\mlabel}[1]{\label{#1}}  
\nc{\mcite}[1]{\cite{#1}}  
\nc{\mref}[1]{\ref{#1}}  
\nc{\meqref}[1]{\eqref{#1}}  
\nc{\mbibitem}[1]{\bibitem{#1}} 
\nc{\mlabel}[1]{\label{#1}{\hfill \hspace{1cm}{\bf{{\ }\hfill(#1)}}}}
\nc{\mcite}[1]{\cite{#1}{{\bf{{\ }(#1)}}}}  
\nc{\mref}[1]{\ref{#1}{{\bf{{\ }(#1)}}}}  
\nc{\meqref}[1]{\eqref{#1}{{\bf{{\ }(#1)}}}}  
\nc{\mbibitem}[1]{\bibitem[\bf #1]{#1}} 
\DeclareMathOperator{\im}{Im}
\newcommand {\emptycomment}[1]{}
\nc{\oprn}{\theta}
\newcommand{\emptycomment}[1]{}
\nc{\calo}{\mathcal{O}}
\nc{\oop}{$\mathcal{O}$-operator\xspace}
\nc{\oops}{$\mathcal{O}$-operators\xspace}
\nc{\mrho}{{\bm{\varrho}}}
\nc{\bfk}{\mathbf{K}}
\nc{\invlim}{\displaystyle{\lim_{\longleftarrow}}\,}
\nc{\ot}{\otimes}
\nc{\eval}[1]{\Big|_{#1}}
\newcommand{\be }{\begin{equation}}
\newcommand{\ee }{\end{equation}}
\nc{\RR}{\mathbb{R}}
\nc{\CC}{\mathbb{C}}
\newcommand{\Ad}{\operatorname{Ad}}
\newcommand{\Gr}{\operatorname{Gr}}
\newcommand{\frkg}{\mathfrak g}
\newcommand{\frkh}{\mathfrak h}
\newcommand{\otp}{\hat{\otimes}_{\pi}}
\newcommand{\br}[1]{   [ \cdot,    \cdot  ]   }
\newcommand{\id}{\mathsf{id}}
\newcommand{\Ker}{\mathrm{Ker}}
\newcommand{\ad}{\mathrm{ad}}
\nc{\CV}{\mathbf{C}}
\NewDocumentEnvironment{Thm}{O{thm} D(){} m}
  {\addtocounter{#1}{-1}%
   \expandafter\renewcommand\csname the#1\endcsname{\ref{#3}}%
   \begin{#1}[#2]}
  {\end{#1}}
\begin{document}

\title[Projective Banach Lie bialgebras, the PYB equation and projective Poisson-Lie groups]{Projective Banach Lie bialgebras, the projective Yang-Baxter equation and projective Poisson-Lie groups }

\author{Zhonghua Li}
\address{School of Mathematical Sciences, Key Laboratory of Intelligent Computing and Applications (Ministry of Education), Tongji University, Shanghai 200092, China}
\email{zhonghua$\_$li@tongji.edu.cn}

\author{Shukun Wang}
\address{School of Mathematics and Big Data, Anhui University of Science and Technology, Huainan 232001, China}
\email{2024093@aust.edu.cn}


\begin{abstract}
In this paper, we first introduce the notion of projective Banach Lie bialgebras as the projective tensor product analogue of Banach Lie bialgebras. Then we consider the completion of the classical Yang-Baxter equation and classical r-matrices,  and propose the notions of the projective Yang-Baxter equation and projective r-matrices. As in the finite-dimensional case, we prove that every quasi-triangular projective r-matrix gives rise to a projective Banach Lie bialgebra. Next adapting Poisson Banach-Lie groups to the projective tensor product setting, we propose the notion of projective Banach Poisson-Lie groups and show that the differentiation of a projective Banach Poisson-Lie group has the projective Banach Lie bialgebra structure. Finally considering bounded $\mathcal{O}$-operators on Banach Lie algebras, we give an equivalent description of triangular projective r-matrices.
\end{abstract}


\keywords{projective Banach Lie bialgebras, the projective Yang-Baxter equation, projective r-matrices, projective Banach Poisson-Lie groups, $\mathcal{O}$-operators. \\
\quad  2020 \emph{Mathematics Subject Classification.} 17B38,  17B62,   22E60, 53D17.\\
Corresponding author: Shukun Wang}

\maketitle

\tableofcontents

\section{Introduction}
\subsection{The motivation}
\subsubsection{Finite-dimensional Lie bialgebras and Poisson Lie groups}

Poisson Lie groups and Lie bialgebras were introduced by V. Drinfeld in \cite{VGD} in the middle of the 1980s. Lie bialgebras are the infinitesimal counterparts of Poisson-Lie groups and are closely related to quantum groups. More precisely, in \cite{D1}, Lie bialgebras were described as the classical limit of quantum groups. Furthermore, the investigation of Lie bialgebras is important in the advancement of integrable systems \cite{CR,F,GO,SK,S2}.

A quasi-triangular Hopf algebra is a Hopf algebra with an invertible element $R$ satisfying some conditions \cite{D1}. The element $R$ is called a quantum R-matrix, which provides a solution of the quantum Yang-Baxter equation. The semi-classical limit of a quantum R-matrix is called a classical r-matrix. It provides an important class of Lie bialgebras, called quasi-triangular Lie bialgebras.

The notion of 
$\mathcal{O}$-operators were introduced and its relationship with classical r-matrices was investigated in \cite{Ku}. More precisely, it was proven that every skew-symmetric classical r-matrix is induced by an $\mathcal{O}$-operator. For more information on the relationship between $\mathcal{O}$-operators and classical r-matrices, we refer the readers to \cite{Bai, B2024}.

\subsubsection{Infinite-dimensional Lie bialgebras}

Unlike the finite-dimensional cases, there is no general relationship between the structures of infinite-dimensional Lie bialgebras and Poisson–Lie groups. However, infinite-dimensional Lie bialgebras are also quantizable \cite{EK,EK1} and are hence closely related to quantum groups. Moreover, there are numerous examples on such structures, for example, the infinite-dimensional Lie bialgebras on Witt algebras and Virasoro algebras \cite{N}.

\subsubsection{Banach Lie bialgebras and Banach Poisson-Lie groups}

An important generalization of infinite-dimensional Lie bialgebras is Banach Lie bialgebras. In \cite{TP}, by adapting the notion of 
Poisson Lie groups and Lie bialgebras to the Banach context, Banach Poisson-Lie groups and Banach Lie bialgebras were introduced. It was demonstrated that the differentiation of a Banach Lie group gives rise to the Banach Lie algebra structure. Furthermore, it was established that every restricted Grassmannian inherits a generalized
Banach Poisson structure from the unitary Banach Lie group, called the Bruhat-Poisson structure.

\subsubsection{Topological Lie bialgebras}

Another important generalization of infinite-dimensional Lie bialgebras is topological Lie bialgebras. In \cite{MX2024}, by considering Lie bialgebras and Main triples in the topological setting, the notion of topological Lie bialgebras was introduced. A topological Lie bialgebra is composed of a topological Lie algebra and a topological Lie coalgebra that satisfies the cocycle condition. Given a Lie algebra $\mathfrak{g}$, the completion of the classical Yang-Baxter equation on $\mathfrak{g}[\![x]\!]$ with respect to the projective topology is called the formal classical Yang-Baxter equation. A solution of the formal classical Yang-Baxter equation on $\mathfrak{g}[\![x]\!]$ is called a formal r-matrix, which gives rise to a topological Lie bialgebra structure on $\mathfrak{g}[\![x]\!]$.

\subsubsection{The goal of this paper}

Building on the aforementioned works \cite{TP,MX2024}, this paper focuses on the study of Banach Lie bialgebras and Banach Poisson-Lie groups in the projective tensor product setting and their connections, aims to establish the relationship between such Banach Lie bialgebras and the completion of the classical Yang-Baxter equation with respect to the projective tensor norm.

\subsection{Main results}
\subsubsection{Projective Banach Lie bialgebras and the projective Yang-Baxter equation}

Considering Banach Lie bialgebras in the projective tensor product setting, we introduce the notion of projective Banach Lie bialgebras in Section \ref{2}. The completion of the classical Yang-Baxter equation with respect to the projective tensor norm given in Section \ref {3} leads to the notion of the projective Yang-Baxter equation. A solution of the projective Yang-Baxter equation is called a projective r-matrix. We establish the relationship between projective r-matrices and projective Banach Lie bialgebras in Section \ref{3}. Below is a summary of some of our main results.

\begin{thm}[Theorem \ref{T1} and Corollary \ref{QP}]
Let $\mathfrak{g}$ be a Banach Lie algebra. 
\begin{itemize}
\item[{\rm(a)}]  If there is a sequence of symmetric elements $(s_n\in \mathfrak{g}\otimes\mathfrak{g})_{n=0}^{\infty}$ such that $\lim_{n\to +\infty} \ad^{(2)}_{x} s_n=0$ in the projective norm for any $x\in \mathfrak{g}$ and  $(s_n\in \mathfrak{g}\otimes\mathfrak{g})_{n=0}^{\infty}$ is bounded in the projective norm, then $\lim_{n\to +\infty} \ad^{(3)}_{x}\langle s_n,s_n\rangle=0$ in the projective norm.
\item[{\rm(b)}] Let $r=a+s\in \mathfrak{g}\hat{\otimes}_{\pi} \mathfrak{g},$ where $a$ is skew-symmetric and s is symmetric and $\ad$-invariant. Let $(r_n\in \mathfrak{g}\otimes \mathfrak{g})_{n=0}^{\infty}$ be a sequence such that
$\lim_{n\to +\infty} r_n=r$ in the projective norm and $a_n,s_n$ be the skew-symmetric and the symmetric parts of $r_n$ for any $n\geq 0$ respectively. If $\lim_{n\to +\infty}\langle r_n,r_n\rangle=0,$ then $\lim_{n\to+\infty}\ad^{(3)}_{x}[\![a_n,a_n]\!]=0$ in the projective norm for any $x\in \mathfrak{g}$.
\item[{\rm(c)}] Let $r\in \frkg\otp \frkg$ be a projective {\rm r}-matrix, $a$ and $s$ be the skew-symmetric and symmetric part respectively. If $s$ is $\ad$-invariant, then $(\frkg,dr)$ is a projective Banach Lie bialgebra.
\end{itemize}
\end{thm}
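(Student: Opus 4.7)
The plan is to work through the three parts in order, exploiting the cross-norm property of the projective tensor norm $\|\cdot\|_\pi$: the Lie bracket, and hence the trilinear expressions $\langle\cdot,\cdot\rangle$ and $[\![\cdot,\cdot]\!]$ built from it, extend to bounded bilinear maps on $\frkg\otp\frkg$, while $\ad_x^{(2)}$ and $\ad_x^{(3)}$ extend to bounded linear operators on the corresponding completed tensor powers. For part~(a), I would first establish the Leibniz-type identity
\[
\ad_x^{(3)}\langle u,v\rangle=\langle \ad_x^{(2)}u,v\rangle+\langle u,\ad_x^{(2)}v\rangle,
\]
which follows because $\ad_x^{(3)}$ acts as a derivation on each of the three Lie brackets $[u^{ij},v^{kl}]$ defining $\langle u,v\rangle$. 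Setting $u=v=s_n$ and using the cross-norm bound $\|\langle u,v\rangle\|_\pi\le C_x\|u\|_\pi\|v\|_\pi$ with $C_x$ depending only on $\|\ad_x\|$, the right-hand side is dominated by $2C_x\|\ad_x^{(2)}s_n\|_\pi\sup_n\|s_n\|_\pi$, which tends to zero.

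For part~(b), I would first use continuity of $\langle\cdot,\cdot\rangle$ on $\frkg\otp\frkg$ together with $r_n\to r$ to conclude $\langle r,r\rangle=\lim_n\langle r_n,r_n\rangle=0$. Polarising the quadratic form, $\langle r,r\rangle=[\![a,a]\!]+(\langle a,s\rangle+\langle s,a\rangle)+\langle s,s\rangle$. A direct algebraic computation in $\frkg\ot\frkg\ot\frkg$, using the six summands of the polarised $\langle\cdot,\cdot\rangle$ together with $(\ad_x\ot\id+\id\ot\ad_x)s=0$ in each pair of tensor positions, yields the Drinfeld-type identity $\langle a',s\rangle+\langle s,a'\rangle=0$ for every skew-symmetric $a'\in\frkg\ot\frkg$ when $s$ is symmetric and $\ad$-invariant. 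Since the skew part $a$ is the $\|\cdot\|_\pi$-limit of the skew tensors $a_n$, continuity extends the vanishing: $\langle a,s\rangle+\langle s,a\rangle=0$. Part~(a) applied to the constant sequence $s$ gives $\ad_x^{(3)}\langle s,s\rangle=0$. Combining,
\[
0=\ad_x^{(3)}\langle r,r\rangle=\ad_x^{(3)}[\![a,a]\!].
\]
Continuity of $[\![\cdot,\cdot]\!]$ gives $[\![a_n,a_n]\!]\to[\![a,a]\!]$, and since $\ad_x^{(3)}$ is bounded, $\ad_x^{(3)}[\![a_n,a_n]\!]\to 0$ in $\|\cdot\|_\pi$.

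For part~(c), the definition of projective r-matrix supplies an approximating sequence $(r_n)\subset\frkg\ot\frkg$ with $r_n\to r$ in $\|\cdot\|_\pi$ and $\langle r_n,r_n\rangle\to 0$; the continuous skew and symmetric projections then yield $a_n\to a$ and $s_n\to s$. Under the hypothesis that $s$ is $\ad$-invariant, part~(b) applies and delivers $\ad_x^{(3)}[\![a,a]\!]=0$ for every $x\in\frkg$. This is the modified classical Yang-Baxter condition; as in the finite-dimensional theory it is precisely what is needed for the coboundary cobracket $dr$ to satisfy the co-Jacobi identity (the $\ad$-invariance of $s$ ensures that the symmetric part contributes nothing to the coboundary), while the 1-cocycle condition for $dr$ holds automatically for any coboundary, so $(\frkg,dr)$ is a projective Banach Lie bialgebra. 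The main obstacle I anticipate is the Drinfeld identity used in (b), namely the vanishing of $\langle a,s\rangle+\langle s,a\rangle$ for $a$ skew and $s$ symmetric $\ad$-invariant: although a finite algebraic computation, it requires careful bookkeeping across six tensor-bracket terms and three distinct applications of $\ad$-invariance; transferring it from $\frkg\ot\frkg$ to $\frkg\otp\frkg$ then rests on the boundedness in $\|\cdot\|_\pi$ of the bilinear map $(u,v)\mapsto\langle u,v\rangle+\langle v,u\rangle$, which follows from the cross-norm inequality.
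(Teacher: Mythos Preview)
Your proposal is correct, and in places cleaner than the paper's argument, though the overall architecture differs.

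For part~(a), the paper expands $\ad_x\langle s_n,s_n\rangle$ term by term, uses the Jacobi identity on the bracketed factor, and bounds each of the three resulting pieces by $2C^2\|s_n\|_\pi\|ds_n(x)\|_\pi$. Your derivation identity $\ad_x^{(3)}\langle u,v\rangle=\langle\ad_x^{(2)}u,v\rangle+\langle u,\ad_x^{(2)}v\rangle$ packages exactly the same Jacobi rearrangement but yields the conclusion in one line once the boundedness of the bilinear map $\langle\cdot,\cdot\rangle$ is established; the paper's computation is in effect a proof of your Leibniz identity written out in coordinates. (Minor slip: your constant should not depend on $x$; it depends only on the Lie-bracket constant $C$.)

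For part~(b), the paper stays entirely inside $\frkg\otimes\frkg$: it expands $\langle r_n,r_n\rangle$, regroups the six cross terms into three pairs each of the form ``something tensor $\ad_{x_{n,i}}(s_n)$'', and shows these pairs tend to zero because $\|\ad_{x_{n,i}}(s_n-s)\|_\pi\le 2C\|x_{n,i}\|\,\|s_n-s\|_\pi$ and $\ad_{x_{n,i}}s=0$; then it invokes part~(a) for the $\langle s_n,s_n\rangle$ piece. You instead pass to the completion, use the exact Drinfeld-type identity $\langle a,s\rangle+\langle s,a\rangle=0$ for $s$ $\ad$-invariant, and then pull the conclusion back via continuity of $[\![\cdot,\cdot]\!]$ and boundedness of $\ad_x^{(3)}$. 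Both routes rest on the same three-pair regrouping (this is the ``Drinfeld identity'' you flag as the main obstacle), but the paper never states it as an exact vanishing, only as an asymptotic one. Your passage through the completion is legitimate because $\langle\cdot,\cdot\rangle$ is $\pi$-bounded bilinear. Two small points: your decomposition should read $\langle r,r\rangle=\langle a,a\rangle+\cdots$ with $\langle a,a\rangle=-\tfrac12[\![a,a]\!]$, not $[\![a,a]\!]$ directly; and ``part~(a) applied to the constant sequence $s$'' is not literally available since $s\notin\frkg\otimes\frkg$, but your own Leibniz identity gives $\ad_x^{(3)}\langle s,s\rangle=2\langle\ad_x^{(2)}s,s\rangle=0$ immediately, so the gap is only notational.

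For part~(c) both arguments coincide: the paper invokes Proposition~\ref{JK} (which is exactly your ``modified classical Yang--Baxter condition implies co-Jacobi'' step) together with~(b).
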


\subsubsection{Projective Banach Lie groups}

As the projective tensor version of Banach Poisson groups, we introduce projective Poisson-Lie groups in Section \ref{4}. It is natural to consider the differentiation of such structures. The following is our main result in Section \ref{4}.

\begin{thm}[Theorem \ref{T2}]
 Let $(G,P)$ be a projective Banach Poisson Lie group and $\Delta:G\to \barwedge^{2} TM$ be the operator given by 
 $$ \Delta(a)=R_{a^{-1}}P_{a},\quad \forall a\in G.$$
 Then $(\mathfrak{g},T_{e} \Delta)$ is a projective Banach Lie algebra, where $\mathfrak{g}$ is the Banach Lie algebra of $G$, $T_{e}\Delta$ is the differential of $\Delta$ at the unit element $e$ and $\barwedge^{2} TM$ is the vector bundle over $M$ whose fiber over $a\in M$ lies in the Banach space of skew-symmetric elements of $T_{a} M\otp T_{a} M$.
\end{thm}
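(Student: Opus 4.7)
The plan is to adapt the classical finite-dimensional argument of Drinfeld (right-translate the Poisson bivector, differentiate at $e$, extract cocycle and co-Jacobi) to the projective Banach setting, the delicate point being that every map produced must be checked to land in $\frkg\otp\frkg$ with bounded norm and that all finite-dimensional identities must be reinterpreted as identities in the projective tensor product completion.

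First, I would unpack the definition of projective Banach Poisson-Lie group from Section \ref{4} to recover the multiplicativity condition on $P$: namely, $P_{ab}=L_{a*}P_{b}+R_{b*}P_{a}$ interpreted as elements of the fiber of $\barwedge^{2}TG$ over $ab$. Right-translating by $(ab)^{-1}$ and using $R_{(ab)^{-1}*}\circ L_{a*}=\Ad_{a}\circ R_{b^{-1}*}$ (with $\Ad_{a}$ acting diagonally) this becomes the group $1$-cocycle identity
\begin{equation*}
\Delta(ab)=\Ad_{a}^{(2)}\Delta(b)+\Delta(a)
\end{equation*}
for $\Delta:G\to\frkg\otp\frkg$, valued in the closed subspace of skew-symmetric elements. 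Smoothness of $\Delta$ in the operator norm follows from the smoothness of $P$ combined with the boundedness of right translation on $T G$, which in turn induces a bounded operation on the projective tensor product by the universal property.

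Next, differentiating the cocycle identity in $b$ at $b=e$ and then in $a$ at $a=e$, and using the standard computation $T_{e}\Ad=\ad$, I obtain
\begin{equation*}
\delta([x,y])=\ad_{x}^{(2)}\delta(y)-\ad_{y}^{(2)}\delta(x),\qquad x,y\in\frkg,
\end{equation*}
where $\delta:=T_{e}\Delta$. Boundedness of $\delta$ is automatic from differentiability at $e$, and skew-symmetry is inherited from $\Delta$. This establishes the $1$-cocycle axiom in the projective norm.

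It remains to derive co-Jacobi for $\delta$ from $[P,P]=0$, and here lies the principal obstacle. In the finite-dimensional setting, one expands $[P,P]$ at $e$ in terms of a second-order jet of $\Delta$ and identifies the vanishing of $[P,P]_{e}$ with $\Alt\bigl((\delta\otimes\id)\circ\delta\bigr)=0$. To reproduce this in the projective Banach setting I would first verify that $\delta\otp\id:\frkg\otp\frkg\to\frkg\otp\frkg\otp\frkg$ extends boundedly by the universal property of $\otp$, so that $(\delta\otp\id)\circ\delta$ is a well-defined bounded map $\frkg\to\frkg\otp\frkg\otp\frkg$. The classical algebraic manipulation is then carried out first on the algebraic tensor product, where every bracket is a finite sum and makes literal sense, and then extended by continuity to $\frkg\otp\frkg$ using that $\frkg\otimes\frkg$ is dense and that all maps involved are bounded. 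A parallel density/continuity argument, applied to the Schouten bracket of invariant multivector fields on $G$ expressed through right translation, identifies $[P,P]_{e}$ with $\Alt\bigl((\delta\otp\id)\circ\delta\bigr)(x)$ for every $x\in\frkg$.

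The main technical hurdle is not the algebraic manipulation, which mirrors the finite-dimensional case, but the continuity upgrades: verifying that the Schouten bracket of a smooth bivector on a Banach manifold behaves compatibly with right trivialization into the projective tensor completion, and that all limits taken to pass from $\frkg\otimes\frkg\otimes\frkg$ to $\frkg\otp\frkg\otp\frkg$ are legitimate. Once these analytic facts are in hand, the cocycle and co-Jacobi identities for $\delta$ hold in the projective norm, giving $(\frkg,\delta)$ the structure of a projective Banach Lie bialgebra in the sense of Section \ref{2}.
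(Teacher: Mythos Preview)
Your treatment of the $1$-cocycle condition is essentially the paper's: both use the group cocycle identity $\Delta(ab)=\Ad_{a}\Delta(b)+\Delta(a)$ (Proposition~\ref{DE}) and differentiate. The divergence is in the Jacobi step.

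The paper does \emph{not} prove the co-Jacobi identity $\hat{\Alt}((\delta\otp\id)\circ\delta)=0$ in $\frkg\otp\frkg\otp\frkg$. Recall that a projective Banach Lie bialgebra only requires that the dual map $\delta^{*}|_{\frkg^{*}\otimes\frkg^{*}}$ be a Lie bracket on $\frkg^{*}$, which is strictly weaker than co-Jacobi (see the remark after Proposition~\ref{Prop:concrete-projective}). Accordingly, the paper works directly with the defining condition \eqref{PJ} of a projective Poisson bivector, which is an identity tested against closed local $1$-forms $\alpha,\beta,\gamma$, not a Schouten-bracket vanishing. Using Lemma~\ref{DEL} and Corollary~\ref{DEE} to compute $d_{e}((\alpha\otp\beta)(P))=(\alpha(e)\otp\beta(e))(\delta)$, one differentiates \eqref{PJ} at $e$ and reads off the Jacobi identity for $[\cdot,\cdot]_{\frkg^{*}}$ immediately.

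Your route via $[P,P]=0$ has two issues. First, the hypothesis is only \eqref{PJ}; the paper never defines a Schouten bracket of sections of $\barwedge^{2}TG$, and you would have to construct one and show that \eqref{PJ} is equivalent to its vanishing in the projective completion---nontrivial analytic work that the paper's argument sidesteps. Second, the statement that ``$[P,P]_{e}$ is identified with $\Alt((\delta\otp\id)\circ\delta)(x)$'' is not right as written: since $P_{e}=0$, the Schouten bracket vanishes at $e$ automatically, and what actually encodes co-Jacobi is the \emph{first derivative} of $[P,P]$ at $e$. Even granting that correction, you would be proving a concrete Banach Lie bialgebra structure, which is more than the theorem claims and may not follow from \eqref{PJ} alone. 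The paper's approach is both shorter and better matched to the projective definition.
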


\subsubsection{Bounded $\mathcal{O}$-operators on Banach Lie algebras and projective Banach Lie bialgebras}

Using bounded $\mathcal{O}$-operators on Banach Lie algebras, we give an equivalent characterization of the triangular projective r-matrices in Section \ref{5}.

\subsection{Structure of this paper}

The paper is organized as follows. In Section \ref{2}, we first recall the projective tensor products and nuclear bilinear forms on Banach spaces. We also recall the duality pairings between Banach spaces and Banach Poisson spaces in Section \ref{22}. Then considering Banach Poisson Lie bialgebras in the projective tensor product setting, we introduce the notion of projective Banach Lie bialgebras and concrete Banach Lie bialgebras, and establish their connections with Banach Lie bialgebras in Section \ref{23}. Moreover, we investigate the relationship between Banach Manin triples and projective Banach Lie bialgebras in Section \ref{24}.

In Section \ref{3}, as the completion of the classical Yang-Baxter equation and classical r-matrices, we first introduce the notion of the projective Yang-Baxter equation and projective r-matrices respectively. We give some basic examples and prove that every quasi-triangular r-matrix gives rise to the projective Banach Lie bialgebra structures in Section \ref{3.2}. We adapt the notion of coboundary Lie bialgebras to the Banach setting and propose the notion of coboundary Banach Lie bialgebras in Section \ref{33}.

In Section \ref{4}, as the projective analogue of Banach Poisson manifolds, we give the notion of projective Banach Poisson manifolds and investigate their connections with Banach Poisson manifolds. Then in Section \ref{42}, we introduce the notion of projective Banach Poisson-Lie groups as the projective tensor product analogue of Banach Poisson Lie groups. We prove that the Lie algebra of a projective Banach Lie group carries the projective Banach Lie bialgebra structure.

Finally in Section \ref{5}, considering $\mathcal{O}$-operators in the Banach setting, we introduce the notion of bounded $\mathcal{O}$-operators on Banach spaces. We give an equivalent description of triangular projective r-matrices using bounded $\mathcal{O}$-operators. We also consider bounded $\mathcal{O}$-operators of the projective tensor form and give an equivalent characterization of such operators.

In this paper, we take the base field to be $\mathbb{K},$ where $\mathbb{K}\in \{\mathbb{R},\mathbb{C}\}.$

\section{Projective Banach Lie bialgebras}\label{2}

In this section, we recall some concepts that we will need for our construction and introduce the notion of projective Banach Lie bialgebras. First we recall projective tensors and nuclear bilinear forms on Banach spaces in Section \ref{21}. We then introduce the duality pairings and Banach Poisson spaces in Section \ref{22}. In Section \ref{23}, we consider the notion of Banach Lie bialgebras and its analogy in the context of projective tensor products, leading to the concept of projective Banach Lie bialgebras. Finally in Section \ref{24}, we establish the relationship between Banach Manin triples and projective Banach Lie bialgebras.

\subsection{Projective tensor products and nuclear bilinear forms on Banach spaces}\label{21}

\subsubsection{Continuous multilinear maps and bounded operators}

First let us recall the definition of continuous maps on Banach spaces.

Let $\mathfrak{g}_1,\ldots,\mathfrak{g}_{n}$ and $\mathfrak{h}$ be Banach spaces. Denote by $\mathcal{L}(\mathfrak{g}_1,\ldots,\mathfrak{g}_{n};\mathfrak{h})$ the space of continuous multilinear maps from $\mathfrak{g}_1\times\cdots\times\mathfrak{g}_{n}$ to $\mathfrak{h}$. The norm of an element $S\in\mathcal{L}(\mathfrak{g}_1,\ldots,\mathfrak{g}_{n};\mathfrak{h})$ is given by 
$$\|S\|=\sup\left\{\|S(x_1,x_2,\ldots,x_n)\|,\ x_i \in B_{\mathfrak{g}_i   }\right\},$$ 
where $B_{\mathfrak{g}_i}$ is the unit ball of $\mathfrak{g}_i.$

Denote by $\mathcal{B}(\mathfrak{g}_1,\mathfrak{g}_2)$ the Banach space of bounded linear operators from $\mathfrak{g}_1$ to $\mathfrak{g}_2$.

\subsubsection{The projective tensor product on Banach spaces }

Then let us recall the definition of projective tensor product on Banach spaces.

Let $\mathfrak{g}_1$ and $\mathfrak{g}_2$ be Banach spaces. The projective tensor norm $||\cdot||_{\pi}$ on $\mathfrak{g}_1\otimes \mathfrak{g}_2$ is defined as
$$||u||_{\pi}= \operatorname{inf}\left\{\sum_{i=1}^{m} ||x_i||\;||y_i||:u=\sum_{i=1}^{m} x_{i}\otimes y_{i}\right\}.$$
The tensor product endowed with the projective tensor norm is denoted by $\mathfrak{g}_1\otimes_{\pi} \mathfrak{g}_2$. The completion of $\mathfrak{g}_1\otimes_{\pi} \mathfrak{g}_2$ is denoted by $\mathfrak{g}_1\hat{\otimes}_{\pi} \mathfrak{g}_2,$ which is called the projective tensor product of $\mathfrak{g}_1$ and $\mathfrak{g}_2.$ 

The definition of projective tensor norm can be generalized to the case for $n>2$. For example, let $\mathfrak{g}_1,\mathfrak{g}_2$ and $\mathfrak{g}_3$ be Banach spaces. For $u\in\mathfrak{g}_1\otimes\mathfrak{g}_2\otimes\mathfrak{g}_3$, define  
$$||u||_{\pi}^{\otimes 3}= \operatorname{inf}\left\{\sum_{i=1}^{m} ||x_i||\;||y_i||\;||z_i||:u=\sum_{i=1}^{m} x_{i}\otimes y_{i}\otimes z_{i}\right\}.$$ 
Then we get the projective tensor product of $\mathfrak{g}_1,\mathfrak{g}_2$ and $\mathfrak{g}_3$, which is denoted by $\mathfrak{g}_1\hat{\otimes}_{\pi} \mathfrak{g}_2\hat{\otimes}_{\pi} \mathfrak{g}_3$ similarly. It follows from \cite{GR} that as Banach spaces,
 $\mathfrak{g}_1\hat{\otimes}_{\pi} \mathfrak{g}_2\hat{\otimes}_{\pi} \mathfrak{g}_3,$ 
 $(\mathfrak{g}_1\hat{\otimes}_{\pi} \mathfrak{g}_2)\hat{\otimes}_{\pi} \mathfrak{g}_3$ and $\mathfrak{g}_1\hat{\otimes}_{\pi} (\mathfrak{g}_2\hat{\otimes}_{\pi} \mathfrak{g}_3)$ are isometric isomorphic.
 
\begin{rmk}
By \cite[Proposition 2.8]{RYAN}, every element $a\in \frkg_1\otp \frkg_2$ can be written as $\sum_{i=1}^{\infty}u_{i}\otimes v_{i}.$ This property plays a crucial role in subsequent proofs, while other tensor norms on Banach spaces, such as the injective tensor product norm, may not possess such property. Hence in this paper, we mainly focus on the projective tensor norm.
\end{rmk}

The following proposition is a well-known result on projective tensor products.

\begin{pro}[\cite{RYAN}]\label{UPTP}
Let $\mathfrak{g}_1,\mathfrak{g}_2$ and $\mathfrak{g}_3$ be Banach spaces. Then for any continuous bilinear map $B$ from $\mathfrak{g}_1\times \mathfrak{g}_2$ to $\mathfrak{g}_3$, there exists a unique bounded linear operator $\hat{B}:\mathfrak{g}_1\hat{\otimes}_{\pi} \mathfrak{g}_2\to \mathfrak{g}_3$ such that for any $x\in \mathfrak{g}_1$ and $y\in \mathfrak{g}_2$, 
$\hat{B}(x\otimes y)=B(x,y).$ Furthermore, the correspondence $B \leftrightarrow \hat{B}$ is an isometric isomorphism between Banach spaces $\mathcal{L}(\mathfrak{g}_1, \mathfrak{g}_2;\mathfrak{g}_3)$ and $\mathcal{B}(\mathfrak{g}_1\hat{\otimes}_{\pi}\mathfrak{g}_2,\mathfrak{g}_3)$.
\end{pro}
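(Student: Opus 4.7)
The plan is to follow the standard construction of the universal property of the projective tensor product, in four steps: (i) build $\hat B$ on the algebraic tensor product via the universal property of $\otimes$; (ii) show this linear map is bounded in the projective norm; (iii) extend by continuity to the completion; (iv) check that the assignment $B\mapsto\hat B$ is an isometric isomorphism.

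For step (i), given $B\in\mathcal{L}(\mathfrak{g}_1,\mathfrak{g}_2;\mathfrak{g}_3)$, the universal property of the algebraic tensor product produces a unique linear map $\tilde B:\mathfrak{g}_1\otimes\mathfrak{g}_2\to\mathfrak{g}_3$ with $\tilde B(x\otimes y)=B(x,y)$. For step (ii), take any $u\in\mathfrak{g}_1\otimes\mathfrak{g}_2$ and any representation $u=\sum_{i=1}^m x_i\otimes y_i$; then
\[
\|\tilde B(u)\|\;\le\;\sum_{i=1}^m\|B(x_i,y_i)\|\;\le\;\|B\|\sum_{i=1}^m\|x_i\|\,\|y_i\|.
\]
Taking the infimum over all representations gives $\|\tilde B(u)\|\le\|B\|\,\|u\|_\pi$, so $\tilde B$ is bounded with norm $\le\|B\|$. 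For step (iii), since $\mathfrak{g}_1\otimes_\pi\mathfrak{g}_2$ is dense in its completion $\mathfrak{g}_1\hat\otimes_\pi\mathfrak{g}_2$ and $\mathfrak{g}_3$ is complete, $\tilde B$ extends uniquely to a bounded operator $\hat B\in\mathcal{B}(\mathfrak{g}_1\hat\otimes_\pi\mathfrak{g}_2,\mathfrak{g}_3)$ with $\|\hat B\|\le\|B\|$, and it still satisfies $\hat B(x\otimes y)=B(x,y)$.

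For step (iv), define $\Phi:\mathcal{L}(\mathfrak{g}_1,\mathfrak{g}_2;\mathfrak{g}_3)\to\mathcal{B}(\mathfrak{g}_1\hat\otimes_\pi\mathfrak{g}_2,\mathfrak{g}_3)$ by $\Phi(B)=\hat B$; linearity of $\Phi$ is immediate from the construction and the uniqueness clause. To see $\Phi$ is surjective, given $T\in\mathcal{B}(\mathfrak{g}_1\hat\otimes_\pi\mathfrak{g}_2,\mathfrak{g}_3)$ set $B_T(x,y):=T(x\otimes y)$; bilinearity is clear, and $\|B_T(x,y)\|\le\|T\|\,\|x\otimes y\|_\pi\le\|T\|\,\|x\|\,\|y\|$ gives continuity with $\|B_T\|\le\|T\|$, and $\Phi(B_T)=T$ by uniqueness on the dense subspace. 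Injectivity follows by the same density. Finally, for the isometry, step (ii) gives $\|\hat B\|\le\|B\|$, and the reverse inequality $\|B(x,y)\|=\|\hat B(x\otimes y)\|\le\|\hat B\|\,\|x\|\,\|y\|$ yields $\|B\|\le\|\hat B\|$.

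There is no serious obstacle here; the only point that needs care is the extension step, where one must verify that the algebraic map is uniformly continuous on the dense subspace in the projective norm (which is exactly what step (ii) delivers), and the symmetric bookkeeping of the two norm inequalities giving the isometry. The key technical input is simply the definition of $\|\cdot\|_\pi$ as an infimum over representations, which is what makes the boundedness estimate work for every representative at once.
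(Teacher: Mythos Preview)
Your proof is correct and is the standard argument for the universal property of the projective tensor product. Note, however, that the paper does not supply its own proof of this proposition: it is stated as a quotation from Ryan's book \cite{RYAN}, so there is no in-paper proof to compare against; your argument is precisely the one found in that reference.
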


\subsubsection{Nuclear bilinear forms}

Let $\mathfrak{g}_1$ and $\mathfrak{g}_2$ be Banach spaces. By \cite{RYAN}, we know that there is a natural algebraic embedding from $\mathfrak{g}^{\ast}_1\otimes \mathfrak{g}^{\ast}_2$ to $\mathcal{L}(\mathfrak{g}_1, \mathfrak{g}_2;\mathbb{K})$. Then the extension of this embedding to the completions gives an operator $$J:\mathfrak{g}^{\ast}_1\hat{\otimes}_{\pi} \mathfrak{g}^{\ast}_2\to \mathcal{L}(\mathfrak{g}_1,\mathfrak{g}_2;\mathbb{K}).$$

A map in $\mathcal{L}(\mathfrak{g}_1,\mathfrak{g}_2;\mathbb{K})$ is called nuclear if it lies in the image of $J$. Equivalently, a bilinear form $S$ is nuclear if and only if there exists bounded sequences $\left(\alpha_n\right)$ and $\left(\beta_n\right)$ in $\mathfrak{g}_1^{\ast}$ and $\mathfrak{g}_2^{\ast}$ respectively such that $\sum_{n=1}^{\infty} \| \alpha_n\|\  \|\beta_n \|<\infty$, and for any $x\in\mathfrak{g}_1, y\in\mathfrak{g}_2$, 
$$S(x,y)=\sum_{n=1}^{\infty} \alpha_{n}(x) \beta_{n}(y).$$ 
An expression of this form $\sum_{n=1}^{\infty} \alpha_n\otimes \beta_n$ is called a nuclear expression of $S$. The nuclear norm of $S$ is defined to be 
$$\|S\|_{N}=\operatorname{inf}\left\{\sum_{n=1}^{\infty} 
\|\alpha_n\|\ \|\beta_n\|:\sum_{n=1}^{\infty} \alpha_n\otimes \beta_n\text{\;is a nuclear expression of\;}S\right\}.$$ 
It is easy to check that 
\begin{equation}\label{CP}
\|S\|\le \|S\|_{N},
\end{equation}
where $\|S\|$ is the norm of $S$ as a bilinear form.

Denote by $\mathcal{N}(\mathfrak{g}_1,\mathfrak{g}_2;\mathbb{K})$ the space of nuclear bilinear forms with the nuclear norm. It was shown in \cite{RYAN} that $\mathcal{N}(\mathfrak{g}_1,\mathfrak{g}_2;\mathbb{K})$ becomes a Banach space, and  
\begin{equation*}
\mathcal{N}(\mathfrak{g}_1,\mathfrak{g}_2;\mathbb{K})\simeq \mathfrak{g}^{\ast}_{1}\hat{\otimes}_{\pi} \mathfrak{g}^{\ast}_2/ \Ker(J)
\end{equation*}
as Banach spaces.

Similarly, using the natural algebraic embedding from $\mathfrak{g}_1 \otimes \mathfrak{g}_2$ to $\mathcal{L}(\mathfrak{g}_1^{\ast}, \mathfrak{g}_2^{\ast};\mathbb{K})$, we get an operator
\begin{equation*}
P:\mathfrak{g}_1 \hat{\otimes}_{\pi} \mathfrak{g}_2 
 \to \mathcal{L}(\mathfrak{g}^{\ast}_1 , \mathfrak{g}^{\ast}_2; \mathbb{K}).
\end{equation*}
 As $\mathfrak{g}_1 \subseteq\mathfrak{g}^{\ast\ast}_1$ and $\mathfrak{g}_2 \subseteq\mathfrak{g}^{\ast\ast}_2,$ one can check that $\im (P)\subseteq \mathcal{N}(\mathfrak{g}^{\ast}_1 , \mathfrak{g}^{\ast}_2; \mathbb{K})$ as linear spaces. Equipped with the nuclear norm, $\im(P)$ becomes a norm space which is denoted by $\overline{\mathcal{N}}(\mathfrak{g}^{\ast}_{1},\mathfrak{g}^{\ast}_{2};\mathbb{K}).$ It is not difficult to verify that $\overline{\mathcal{N}}(\mathfrak{g}^{\ast}_{1},\mathfrak{g}^{\ast}_{2};\mathbb{K})$ is also a Banach space. As Banach spaces, we have 
 \begin{equation}\label{BN}
\overline{\mathcal{N}}(\mathfrak{g}^{\ast}_{1},\mathfrak{g}^{\ast}_{2};\mathbb{K})\simeq \mathfrak{g}_{1}\hat{\otimes}_{\pi} \mathfrak{g}_{2}/\Ker(P).
 \end{equation}
 Therefore, there exists a natural surjective continuous map 
 \begin{equation}\label{GA}
p:\mathfrak{g}_{1}\hat{\otimes}_{\pi} \mathfrak{g}_{2}\to \overline{\mathcal{N}}(\mathfrak{g}^{\ast}_{1},\mathfrak{g}^{\ast}_{2};\mathbb{K}).
 \end{equation}
As $\overline{\mathcal{N}}(\mathfrak{g}^{\ast}_{1},\mathfrak{g}^{\ast}_{2};\mathbb{K})\subseteq \mathcal{L}(\mathfrak{g}^{\ast}_{1},\mathfrak{g}^{\ast}_{2};\mathbb{K})$ as linear spaces, then by \eqref{CP}, $p$ induces a continuous map from $\frkg_{1}\otp\frkg_{2}$ to $\mathcal{L}(\mathfrak{g}^{\ast}_{1},\mathfrak{g}^{\ast}_{2};\mathbb{K})$. We still denote this map by $p.$

\subsubsection{Skew-symmetric and symmetric projective tensor products}

Let $\mathfrak{g}$ be a Banach space. Define the map $\tau:\mathfrak{g}\otimes\mathfrak{g}\to \mathfrak{g}\otimes \mathfrak{g}$ by 
$$\tau(x\otimes y)=y\otimes x,\quad  \forall x,y\in \mathfrak{g}.$$ 
And let $\hat{\tau}:\mathfrak{g}\hat{\otimes}_{\pi} \mathfrak{g}\to \mathfrak{g}\hat{\otimes}_{\pi}\mathfrak{g}$ be the unique extension of $\tau$. An element $r\in \mathfrak{g}\hat{\otimes}_{\pi}\mathfrak{g}$ is called skew-symmetric if $\hat{\tau}(r)=-r$ and it is called symmetric if $\hat{\tau}(r)=r.$



\subsection{Duality pairings between Banach spaces and Banach Poisson spaces}\label{22}

\subsubsection{Banach Lie algebras}

Let $\mathfrak{g}$ be a Banach space and a Lie algebra with the Lie bracket $[\cdot,\cdot]$. Then $\mathfrak{g}$ is called a Banach Lie algebra if the Lie bracket $[\cdot,\cdot]$ is continuous, that is, there exists a constant $C>0$ such that 
\begin{equation}\label{BLA}
\big{\|}[x,y]\big{\|}\le C\|x\|\ \|y\| 
\end{equation}
for any $x,y\in \mathfrak{g}$.

\begin{rmk}
In some references on Banach Lie algebra, the condition $\eqref{BLA}$ is replaced by $\big{\|}[x,y]\big{\|}\le 2\|x\|\ \|y\|$ or $\big{\|}[x,y]\big{\|}\le \|x\|\ \|y\|$.
\end{rmk}

\subsubsection{Adjoint and coadjoint actions on Banach Lie algebras}

Here we use the terminology given in \cite{TP}. Let $\mathfrak{g}$ be a Banach Lie algebra. Recall that $\mathfrak{g}$ acts on itself and its continuous dual $\mathfrak{g}^{\ast}$:
$$
\begin{array}{llll}
\ad~: &\mathfrak{g}\times\mathfrak{g}&\longrightarrow &\mathfrak{g}\\
& (x, y) & \longmapsto & \ad_x y = [x , y ],
\end{array}
$$
$$
\begin{array}{llll}
-\ad^*~: &\mathfrak{g}\times\mathfrak{g}^*&\longrightarrow &\mathfrak{g}^*\\
& (x, \alpha) & \longmapsto & -\ad^*_x \alpha = -\alpha\circ \ad_x,
\end{array}
$$
where $\ad^{\ast}$ denotes the dual map of $\ad.$

\subsubsection{Duality pairings}

Let $\mathfrak{g}_1$ and $\mathfrak{g}_2$ be two normed spaces over the field $\mathbb{K},$ and let 
\begin{equation*}\label{DP}
	\langle \cdot,\cdot \rangle_{\mathfrak{g}_1,\mathfrak{g}_2}: \mathfrak{g}_1\times \mathfrak{g}_2 \to \mathbb{K}
\end{equation*}
be a continuous map. Then the map $\langle \cdot,\cdot \rangle_{\mathfrak{g}_1,\mathfrak{g}_2}$ is called a duality pairing between $\mathfrak{g}_1$ and $\mathfrak{g}_2$ if it is non-degenerate, that is, if the following two conditions hold:
\begin{itemize}
	\item[(a)]  $\langle x,y \rangle_{\mathfrak{g}_1, \mathfrak{g}_2}=0$ for any $y\in \mathfrak{g}_2$ implies $x=0$;
	\item[(b)] $\langle x,y \rangle_{\mathfrak{g}_1, \mathfrak{g}_2}=0$ for any $x\in \mathfrak{g}_1$ implies $y=0$.
\end{itemize}

Moreover, if the following two maps 
\begin{equation}\label{duality_pairing_strong}
	\begin{array}{lrl} \mathfrak{g}_1& \longrightarrow & \mathfrak{g}_2^*\\ x & \longmapsto &  \langle x,\cdot\rangle_{\mathfrak{g}_1, \mathfrak{g}_2}\end{array}
	\quad
	\textrm{and}\quad
	\begin{array}{lrl} \mathfrak{g}_2& \longrightarrow & \mathfrak{g}_1^*\\ y & \longmapsto &  \langle \cdot,y\rangle_{\mathfrak{g}_1, \mathfrak{g}_2}\end{array}
\end{equation} are isomorphic, then $\langle \cdot,\cdot\rangle_{\mathfrak{g}_1,\mathfrak{g}_2}$ is called a strong duality pairing between $\mathfrak{g}_1$ and $\mathfrak{g}_2.$

\subsubsection{Banach Poisson-Lie spaces}

Let $(\mathfrak{g}_+,\mathfrak{g}_-)$ be a duality pairing between Banach Lie algebras. The space $\mathfrak{g}_-$ is called a Banach Poisson-Lie space with respect to $\mathfrak{g}_+$ if $\mathfrak{g}_+$ acts continuously on $\mathfrak{g}_-$ by the coadjoint action, that is,
$\mathfrak{g}_-$ is preserved by the coadjoint action and the action map $$\ad^{\ast}:\mathfrak{g}_+\times \mathfrak{g}_- \to \mathfrak{g}_-$$ is continuous.

\subsubsection{Adjoint action on the space of continuous multilinear maps and the projective tensor product Banach spaces}

Here we use the terminology introduced in \cite{TP}. Let $\mathfrak{g}_{+}$ and $\mathfrak{g}_{-}$ be Banach Lie algebras with duality pairing $\langle\cdot ,\cdot\rangle$, such that $\mathfrak{g}_-$ is a Banach Poisson-Lie space with respect to $\mathfrak{g}_+$. In the paper, we write $\ad^{\ast}$ for $\ad^{\ast}_{\mathfrak{g}_{-}}$. To save space, we denote $\mathcal{L}(\underbrace{\mathfrak{g}_{-},\ldots,\mathfrak{g}_{-}}_{k};\mathbb{K})$ by $L^{k}(\mathfrak{g}_{-},\mathbb{K})$, which is the Banach spaces of continuous multilinear maps from $\mathfrak{g}_{-}\times\cdots \times \mathfrak{g}_{-}$ to $\mathbb{K}$, where $\mathfrak{g}_{-}$ are repeated $k$ times. Then we define the continuous action of $\mathfrak{g}_{+}$ on $L^{k}(\mathfrak{g}_{-};\mathbb{K})$, called the adjoint action, by 
\begin{align*}
	\ad_x f (y_1, \dots, y_k)= \sum_{i =1}^{k} f(y_1, \dots, \ad^*_x y_i, \dots, y_k),
\end{align*}
where $f\in L^{k}(\mathfrak{g}_{-};\mathbb{K})$ and $y_1, \dots, y_k\in \mathfrak{g}_{-}$.

Similarly, we can define the adjoint action on projective tensor product Banach spaces. Let $\frkg$ be a Banach space, and denote the space $\frkg\otimes\cdots\otimes \frkg$ with $\frkg$ repeated $k$ times by $\frkg^{\otimes k}.$ similarly, denote $\frkg\otp\cdots\otp\frkg$ with $\frkg$ repeated $k$ times by $\frkg^{\otp k}.$ For any $x\in \frkg$, define $\ad_{x}:\frkg^{\otimes k}\to \frkg^{\otimes k}$ by 
$$\ad_{x}(x_{1}\otimes x_{2}\cdots\otimes x_{k})=\sum_{i=1}^{k} x_{1}\otimes\cdots \otimes\ad_{x}(x_{i})\otimes \cdots\otimes x_{k},$$ 
where $x_1,\ldots,x_{k}\in \frkg.$ Extending $\ad_{x}$ with respect to the projective tensor norm, we get a unique operator from $\frkg^{\otp k}$ to $\frkg^{\otp k}.$ By abuse of
notation, we will also denote this operator by $\ad_x.$

Now we define the 1-coboundary on $\frkg^{\otimes k}.$ For any $r\in \frkg^{\otimes k}$, define $dr:\frkg\to \frkg^{\otimes k}$ by
$$dr(x)=\ad_{x} r,\quad \forall x\in \frkg.$$ 
And it is similar to define the 1-coboundary on $\frkg^{\otp k}.$

\subsubsection{Skew-symmetric and symmetric continuous bilinear maps}

For a Banach space $\mathfrak{g}$, the set of skew-symmetric continuous bilinear maps on $\mathfrak{g}$ is given by
$$\Lambda^2\mathfrak{g}^{\ast} = \left\{S\in \mathcal{L}(\mathfrak{g}, \mathfrak{g};\mathbb{K})~:~\forall x, y\in \mathfrak{g}, S(x, y) = -S(y,x)\right\}.$$
Also, an element $S\in \mathcal{L}(\mathfrak{g}, \mathfrak{g};\mathbb{K})$ is called symmetric if for all $x,y\in\mathfrak{g}$, we have $S(x,y)=S(y,x)$. For a nuclear map $S\in \mathcal{N}(\mathfrak{g}^{\ast},\mathfrak{g}^{\ast};\mathbb{K})$, $S$ is called skew-symmetric (resp. symmetric) if it is skew-symmetric (resp. symmetric) as an element of $\mathcal{L}(\mathfrak{g}^{\ast}, \mathfrak{g}^{\ast};\mathbb{K})$.

Let $\langle \cdot,\cdot \rangle_{\mathfrak{g}_+,\mathfrak{g}_-}$ be a duality pairing between Banach spaces $\mathfrak{g}_+$ and $\mathfrak{g}_
-.$  By the definition, $\mathfrak{g}_+\subseteq \mathfrak{g}^{\ast}_-$. Denote by $\Lambda^2 \mathfrak{g}_+$ the subspace of $\Lambda^2\mathfrak{g}^{\ast}_-$ consisting of $S\in\Lambda^{2} \mathfrak{g}^{\ast}_-$ such that for any $x\in \mathfrak{g}_-,$ the map $S_{x}\in \mathfrak{g}_+,$ where $S_{x}:\mathfrak{g}_-\to \mathbb{K}$ is given by 
$$S_{x}(y)=S(x,y),\quad \forall y\in \mathfrak{g}_-.$$ 
That is,
$$\Lambda^{2}\mathfrak{g}_+ =\left\{S\in \Lambda^2 \mathfrak{g}^{\ast}_-:\ \forall x \in \mathfrak{g}_-,\ S_{x}\in \mathfrak{g}_+  \right\}.$$

\subsection{Banach Lie algebras and projective Banach Lie biagebras}\label{23}

In \cite{TP}, as the differentiations of Banach Poisson Lie groups, the notion of Banach Lie bialgebra was introduced. Considering Banach Lie bialgebras in the context of projective tensor products, we give the notion of projective Banach Lie bialgebras. 

\subsubsection{Banach Lie bialgebras}

Let $\langle \cdot,\cdot\rangle_{\mathfrak{g_+},\mathfrak{g_-}}$ be a duality pairing between Banach spaces $\mathfrak{g}_+$ and $\mathfrak{g}_-$ with $\mathfrak{g}_+$ a Banach Lie algebra. Then $\mathfrak{g}_+$ is called a \textbf{Banach Lie bialgebra} with respect to $\mathfrak{g}_-$ if 
\begin{itemize}
    \item[(a)]  $\mathfrak{g}_+$ acts continuously by coadjoint action on $\mathfrak{g}_-$, that is, $\ad^{\ast}_{\mathfrak{g_{+
    }}} \mathfrak{g}_{-}\subseteq \mathfrak{g}_{-}$ and the map 
    $$
    \begin{aligned}
    &\mathfrak{g}_{+}\times \mathfrak{g}_{-}\to \mathfrak{g}_{-}\\
    &(x_{+},x_{-})\mapsto \ad^{\ast}_{x_{+}} x_{-} 
    \end{aligned} 
    $$ is continuous.
    \item[(b)]  There is a Lie bracket $[\cdot,\cdot]_{\mathfrak{g}_-}:\mathfrak{g}_-\times \mathfrak{g}_-\to\mathfrak{g}_-$ such that $\mathfrak{g}_-$ is a Banach Lie algebra and $[\cdot, \cdot]_{\mathfrak{g}_-}~:\mathfrak{g}_-\times\mathfrak{g}_-\rightarrow\mathfrak{g}_-$ restricts to
  a $1$-cocycle $\theta~: \mathfrak{g}_+\rightarrow \Lambda^2\mathfrak{g}_-^*$ with respect to the adjoint action $\ad$ of $\mathfrak{g}_+$ on  $\Lambda^2\mathfrak{g}_-^*$, that is, for any $x, y\in \mathfrak{g}_+$,
  \begin{equation*}\begin{array}{ll}
\theta\left([x, y]\right) & = \ad_x\left(\theta(y)\right) - \ad_y\left(\theta(x)\right).
\end{array}
\end{equation*}
\end{itemize}

\subsubsection{Projective Banach Lie bialgebras}

\begin{defi} 
Let $\mathfrak{g}$ be a Banach Lie algebra and $\delta:\mathfrak{g}\to\mathfrak{g}\hat{\otimes}_{\pi}\mathfrak{g}$ be a bounded operator. Then the pair
$(\mathfrak{g},\delta)$ is called a \textbf{projective Banach Lie bialgebra} if it satisfies:
\begin{itemize}
\item[(a)] the operator $\delta^{\ast}|_{\mathfrak{g}^{\ast}\otimes \mathfrak{g}^{\ast}}:\frkg^{\ast}\times\frkg^{\ast}\to \frkg^{\ast}$ given by 
\begin{equation}\label{PBL}
\delta^{\ast}|_{\mathfrak{g}^{\ast}\otimes \mathfrak{g}^{\ast}}(\alpha,\beta)(x)=(\alpha\hat{\otimes}_{\pi} \beta)\delta(x)\quad\forall \alpha,\beta\in \mathfrak{g}^{\ast},\quad\forall x\in \mathfrak{g}
\end{equation}
is a Lie bracket, where $\delta^{\ast}$ and $\alpha \otp \beta$ denote the transpose of $\delta$ and the unique extension of $\alpha\otimes \beta$ respectively;
\item[(b)] $\delta$ is a 1-cocycle, that is, for any $x,y\in \frkg$ 
\begin{equation*}
\delta([x,y])=\ad_{x}(\delta(y))-\ad_{y}(\delta(x)).
\end{equation*}
\end{itemize}
\end{defi}

\begin{rmk}
\begin{itemize}
\item[(a)] By $\eqref{GA},$ for any $x\in\frkg,$ $\delta(x)$ induces a unique continuous bilinear map lies in $\overline{\mathcal{N}}(\mathfrak{g}^{\ast},\mathfrak{g}^{\ast};\mathbb{K})$. Hence $\eqref{PBL}$ also means that 
$$\delta^{\ast}|_{\frkg^{\ast}\otimes\frkg^{\ast}}(\alpha,\beta)(x)=p(\delta(x))(\alpha,\beta),$$ 
where $p$ is given in $\eqref{GA}.$ 
\item[(b)] In the above definition, as $\delta$ is bounded, we find that $$\|\delta^{\ast}|_{\mathfrak{g}^{\ast}\otimes \mathfrak{g}^{\ast}}(\alpha,\beta)\|\le  \|\delta\|\ \|\alpha\|\ \|\beta\|.$$ 
Hence condition (a) is equivalent to say that $\frkg^{\ast}$ is a Banach Lie algebra with repsect to $\delta^{\ast}|_{\mathfrak{g}^{\ast}\otimes \mathfrak{g}^{\ast}}.$
\end{itemize}
\end{rmk}

In the next proposition, we prove that every projective Banach Lie bialgebra has the Banach Lie algebra structure.

\begin{pro}
Let $(\mathfrak{g},\delta)$ be a projective  Banach Lie bialgebra and $[\cdot,\cdot]_{\mathfrak{g}^{\ast}}$
be the restriction of $\delta^{\ast}$ on $\mathfrak{g}^{\ast}\otimes \mathfrak{g}^{\ast}$. Then $\frkg^{\ast}$ is a Lie algebra with 
Lie bracket $[\cdot,\cdot]_{\mathfrak{g}^{\ast}}$ and $\mathfrak{g}$ is a Banach Lie bialgebra with respect to $\mathfrak{g}^{\ast}.$
\end{pro}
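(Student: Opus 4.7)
The plan is to verify the two assertions separately. The first assertion, that $\mathfrak{g}^*$ is a Lie algebra with bracket $[\cdot,\cdot]_{\mathfrak{g}^*}:=\delta^*|_{\mathfrak{g}^*\otimes\mathfrak{g}^*}$, is essentially built into condition (a) of the definition of a projective Banach Lie bialgebra, which already demands $\delta^*|_{\mathfrak{g}^*\otimes\mathfrak{g}^*}$ be a Lie bracket. The estimate recorded in the preceding remark, $\|[\alpha,\beta]_{\mathfrak{g}^*}\|\le\|\delta\|\,\|\alpha\|\,\|\beta\|$, will then upgrade this to a Banach Lie algebra structure on $\mathfrak{g}^*$.

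For the second assertion, I would equip the pair $(\mathfrak{g},\mathfrak{g}^*)$ with the canonical evaluation pairing $\langle x,\alpha\rangle=\alpha(x)$, which is non-degenerate by Hahn--Banach. Then I verify the two axioms of a Banach Lie bialgebra with $\mathfrak{g}_+=\mathfrak{g}$ and $\mathfrak{g}_-=\mathfrak{g}^*$. For axiom (a), the adjoint map $\ad:\mathfrak{g}\times\mathfrak{g}\to\mathfrak{g}$ is bounded because $\mathfrak{g}$ is a Banach Lie algebra, and the continuity of $\ad^*:\mathfrak{g}\times\mathfrak{g}^*\to\mathfrak{g}^*$ follows immediately by dualization.

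For axiom (b), the natural candidate for the cocycle is
\[
\theta(x)(\alpha,\beta):=\langle x,[\alpha,\beta]_{\mathfrak{g}^*}\rangle=(\alpha\,\hat{\otimes}_\pi\,\beta)(\delta(x)),\qquad x\in\mathfrak{g},\ \alpha,\beta\in\mathfrak{g}^*,
\]
which is exactly the restriction of $[\cdot,\cdot]_{\mathfrak{g}^*}$ through the pairing. Bilinearity and continuity of $\theta(x)$ are immediate from the boundedness of $\delta$, while skew-symmetry of $\theta(x)$ is inherited from the skew-symmetry of $[\cdot,\cdot]_{\mathfrak{g}^*}$, so $\theta(x)\in\Lambda^{2}\mathfrak{g}^{**}$. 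The cocycle condition then amounts to checking that, for all $x,y\in\mathfrak{g}$ and $\alpha,\beta\in\mathfrak{g}^*$,
\[
(\alpha\,\hat{\otimes}_\pi\,\beta)\bigl(\delta([x,y])\bigr)=(\alpha\,\hat{\otimes}_\pi\,\beta)\bigl(\ad_x\delta(y)-\ad_y\delta(x)\bigr),
\]
which is immediate from the $1$-cocycle property of $\delta$ built into the definition.

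To translate the right-hand side into the prescribed action of $\mathfrak{g}$ on $\Lambda^{2}\mathfrak{g}^{**}$, I would prove the compatibility identity
\[
(\alpha\,\hat{\otimes}_\pi\,\beta)(\ad_x u)=(\ad^*_x\alpha\,\hat{\otimes}_\pi\,\beta)(u)+(\alpha\,\hat{\otimes}_\pi\,\ad^*_x\beta)(u),\qquad u\in\mathfrak{g}\,\hat{\otimes}_\pi\,\mathfrak{g},
\]
first on elementary tensors $u=v\otimes w$ by direct expansion, and then extend it to all of $\mathfrak{g}\,\hat{\otimes}_\pi\,\mathfrak{g}$ using continuity of both sides together with density of $\mathfrak{g}\otimes\mathfrak{g}$ in $\mathfrak{g}\,\hat{\otimes}_\pi\,\mathfrak{g}$. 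Applying this identity to $\delta(y)$ and $\delta(x)$ turns the displayed equality into precisely $\theta([x,y])=\ad_x\theta(y)-\ad_y\theta(x)$. There is no genuinely hard step here; the only place that requires a little care is the density/continuity extension of the compatibility identity above, which is routine once the formula is verified on simple tensors.
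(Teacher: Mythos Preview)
Your proposal is correct and follows essentially the same route as the paper's proof: both verify the continuity of $\ad^*$, define $\theta$ as the restriction of the dual of $[\cdot,\cdot]_{\mathfrak{g}^*}$ to $\mathfrak{g}$, and derive the cocycle identity for $\theta$ from the cocycle identity for $\delta$ via the compatibility identity $(\alpha\hat{\otimes}_\pi\beta)(\ad_x u)=(\ad_x^*\alpha\hat{\otimes}_\pi\beta)(u)+(\alpha\hat{\otimes}_\pi\ad_x^*\beta)(u)$. The only cosmetic difference is that where you spell out the density/continuity extension of this identity explicitly, the paper simply invokes Proposition~\ref{UPTP} (the universal property of the projective tensor product), which amounts to the same thing.
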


\begin{proof}
 By the definition, it is easy to see that the bilinear map $\ad^{\ast}:\frkg \times\frkg^{\ast}\to \frkg^{\ast}$ is continuous.  Denote by $\theta$ the restriction of $[\cdot,\cdot]_{\frkg^{\ast}}^{\ast}$ on $\frkg.$ It is enough to show that $\theta$ is a 1-cocycle.
By Proposition \ref{UPTP}, for any $x,y\in\mathfrak{g}$ and $\alpha,\beta\in \mathfrak{g}^{\ast}$, we have 
$$\begin{aligned}
\langle [\alpha, \beta]_{\mathfrak{g}^{\ast}},[x,y]\rangle- \langle [\ad_{x}^{\ast} \alpha,\beta]_{\mathfrak{g}^{\ast}},y\rangle-\langle [\alpha,\ad_{x}^{\ast} \beta ]_{\mathfrak{g}^{\ast}},y\rangle+\langle [\ad_{y}^{\ast} \alpha,\beta]_{\mathfrak{g}^{\ast}},x\rangle+\langle [\alpha,\ad_{y}^{\ast} \beta]_{\mathfrak{g}^{\ast}},x\rangle=0.
\end{aligned}
$$
Then by the definition we have 
$$\begin{aligned}
\langle \alpha\otimes\beta,\theta([x,y])\rangle- \langle \ad_{x}^{\ast} \alpha\otimes\beta,\theta(y)\rangle-\langle \alpha\otimes\ad_{x}^{\ast} \beta ,\theta(y)\rangle+\langle \ad_{y}^{\ast} \alpha\otimes\beta,\theta(x)\rangle+\langle \alpha\otimes\ad_{y}^{\ast} \beta,\theta(x)\rangle=0.
\end{aligned}
$$
Finally, we get 
\begin{equation*}\begin{array}{ll}
\theta\left([x, y]\right) -\ad_x\left(\theta(y)\right) +\ad_y\left(\theta(x)\right)=0,
\end{array}
\end{equation*}
which means that $\theta$ is a 1-cocycle.
\end{proof}

\subsubsection{Projective Banach Lie bialgebras and concrete Banach Lie bialgebras}

Here we propose the notion of projective Lie coalgebras, which coincides with topological Lie coalgebras given in \cite{MX2024} and completed Lie coalgebras proposed in \cite{B2024}.

\begin{defi}\label{LC}
A projective Banach Lie coalgebra is a pair $(\frkg,\delta)$ where $\mathfrak{g}$ is a Banach space and $\delta:\frkg\to \frkg\otp\frkg$ is a bounded linear map satisfying
\begin{itemize}
\item[(a)] $\delta(x)+\hat{\tau}\delta(x)=0$;
\item[(b)] $\hat{\Alt}((\delta\otp\id)(\delta(x)))=0$
\end{itemize}
for any $x\in \frkg.$ Here $\Alt:\frkg\otimes \frkg \otimes \frkg\to \frkg\otimes \frkg\otimes \frkg$ is given by 
\begin{equation*}
\Alt(x\otimes y\otimes z)=x\otimes y\otimes z+y\otimes z\otimes x+z\otimes x\otimes y,\quad \forall x,y,z\in \frkg,
\end{equation*}
and $\hat{\tau},\hat{\Alt},\delta\otp \id$ stand for the unique continuous extension of $\tau$, $\Alt$ and $\delta\otp\id$ respectively.
\end{defi}

Next we introduce the notion of concrete Banach Lie bialgebras.

\begin{defi}
 A pair $(\frkg,\delta)$ is called a \textbf{concrete Banach Lie bialgebra} if it satisfies the following conditions:
\begin{itemize}
\item[(a)] $\frkg$ is a Banach Lie algebra;
\item[(b)] $(\frkg,\delta)$ is a projective Banach Lie coalgebra;
\item[(c)]
$\delta$ is a $1$-cocycle, that is, 
\begin{equation*}
\delta([x,y])=\ad_{x}(\delta(y))-\ad_{y}(\delta(x)),\quad \forall x,y\in \frkg.
\end{equation*}
\end{itemize}
\end{defi}

\begin{pro}\label{Prop:concrete-projective}
Let $(\frkg,\delta)$ be a concrete Banach Lie bialgebra. Then $(\frkg,\delta)$ is a projective Banach Lie bialgebra.
\end{pro}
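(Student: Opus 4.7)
The plan is to verify the two conditions in the definition of a projective Banach Lie bialgebra directly from the concrete structure. Since condition (b), the $1$-cocycle property, is literally part of the hypothesis, the entire content of the proposition is condition (a): namely that the transpose bracket
\[
[\alpha,\beta]_{\frkg^{\ast}}(x) := (\alpha\otp\beta)\bigl(\delta(x)\bigr),\qquad \alpha,\beta\in\frkg^{\ast},\ x\in\frkg,
\]
defines a Lie bracket on $\frkg^{\ast}$. Here I use Proposition \ref{UPTP} to view $\alpha\otp\beta$ as the unique bounded linear functional on $\frkg\otp\frkg$ extending the algebraic tensor; boundedness of $\delta$ together with $\|\alpha\otp\beta\|\leq \|\alpha\|\,\|\beta\|$ then guarantees that $[\alpha,\beta]_{\frkg^{\ast}}$ lies in $\frkg^{\ast}$ and that the bracket is continuous, so the remaining content is skew-symmetry and the Jacobi identity.

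First I would establish skew-symmetry. Given $\alpha,\beta\in\frkg^{\ast}$ and $x\in\frkg$, the map $\alpha\otp\beta+\beta\otp\alpha$ factors as $(\alpha\otp\beta)\circ(\id+\hat{\tau})$, so condition (a) of Definition \ref{LC}, namely $\delta(x)+\hat{\tau}\delta(x)=0$, immediately gives $[\alpha,\beta]_{\frkg^{\ast}}(x)+[\beta,\alpha]_{\frkg^{\ast}}(x)=0$.

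Next, for the Jacobi identity, the key observation is that for any $\alpha,\beta,\gamma\in\frkg^{\ast}$ and any $x\in\frkg$,
\[
[[\alpha,\beta]_{\frkg^{\ast}},\gamma]_{\frkg^{\ast}}(x) = (\alpha\otp\beta\otp\gamma)\bigl((\delta\otp\id)(\delta(x))\bigr).
\]
To see this I would pick, via Proposition 2.8 of \cite{RYAN} (quoted in the remark after the projective tensor definition), an expansion $\delta(x)=\sum_{i}u_{i}\otimes v_{i}$ converging in the projective norm, compute $[\alpha,\beta]_{\frkg^{\ast}}(u_{i})=(\alpha\otp\beta)\delta(u_{i})$, and use continuity of $\gamma$ together with the fact that $\delta\otp\id$ is bounded to pass the sum through. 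Once this identity is in hand, the cyclic sum
\[
[[\alpha,\beta],\gamma](x)+[[\beta,\gamma],\alpha](x)+[[\gamma,\alpha],\beta](x)
\]
reorganizes, by relabelling tensor slots, into $(\alpha\otp\beta\otp\gamma)\bigl(\hat{\Alt}((\delta\otp\id)\delta(x))\bigr)$, which vanishes by condition (b) of Definition \ref{LC}. Combined with the cocycle condition inherited from the concrete structure, this proves that $(\frkg,\delta)$ is a projective Banach Lie bialgebra.

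The main subtlety, and the only place where one must be careful rather than formal, is the identity $[[\alpha,\beta],\gamma]_{\frkg^{\ast}}(x)=(\alpha\otp\beta\otp\gamma)((\delta\otp\id)\delta(x))$: in the finite-dimensional case it is a one-line calculation, but in the Banach setting one must justify interchanging the continuous linear functional $\gamma$ with an absolutely convergent series in the projective norm, and identify $\alpha\otp\beta\otp\gamma$ as a bounded functional on the triple projective tensor product $\frkg\otp\frkg\otp\frkg$, using the isometric isomorphism $\frkg\otp(\frkg\otp\frkg)\simeq\frkg\otp\frkg\otp\frkg$ mentioned after the definition of the triple projective product. Once these continuity/associativity points are dispatched by the universal property Proposition \ref{UPTP}, the proof reduces to dualizing the Lie coalgebra axioms slot-by-slot.
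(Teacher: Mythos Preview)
Your proposal is correct and follows exactly the approach the paper intends: the paper's own proof is a one-liner (``By Definition~\ref{LC}, one can readily verify that $\delta^{\ast}|_{\frkg^{\ast}\otimes \frkg^{\ast}}$ is a Lie bracket on $\frkg^{\ast}$''), and what you have written is precisely the verification it leaves to the reader. Your handling of the only nontrivial point---passing from the coalgebra axioms to the dual Lie algebra axioms via continuity and the universal property of the projective tensor product---is accurate, including the identification $(\alpha\otp\beta\otp\gamma)\circ\hat{\Alt}=\alpha\otp\beta\otp\gamma+\gamma\otp\alpha\otp\beta+\beta\otp\gamma\otp\alpha$ that makes the Jacobi computation work.
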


\begin{proof}
By Definition \ref{LC}, one can readily verify that $\delta^{\ast}|_{\frkg^{\ast}\otimes \frkg^{\ast}}$ is a Lie bracket on $\frkg^{\ast}$. Hence $(\frkg,\delta)$
is a projective Banach Lie bialgebra.
\end{proof}

\begin{rmk}
By \cite{RYAN}, if $\frkg$ is of finite dimension, then $\frkg\otimes_{\pi} \frkg$ is complete. It is well-known that projective Banach Lie bialgebras and concrete Banach Lie bialgebras are equivalent structures in the finite-dimensional setting. However, this equivalence breaks down in the infinite-dimensional case. As demonstrated by Proposition \ref{Prop:concrete-projective}, every concrete Banach Lie bialgebra has the structure of projective Banach Lie bialgebras. While the converse is not true. This is primarily due to the inequivalence of spaces $\overline{\mathcal{N}}(\mathfrak{g}^{\ast},\mathfrak{g}^{\ast};\mathbb{K})$ and $\frkg\otp \frkg$  in the infinite-dimensional context. By \eqref{BN}, we know that the two spaces are not necessarily isomorphic . 

Though concrete Banach Lie bialgebras appear to more naturally align with the classical definition of Lie bialgebras, our investigation in this paper centers primarily on projective Banach Lie bialgebras, motivated by two key considerations. Firstly, projective Banach Lie bialgebras are better suited for investigating the relationship with the projective Yang-Baxter equation given in Section \ref{3}. Secondly, projective Banach Lie bialgebras provide a more favorable framework for studying their corresponding Lie groups structure.  
\end{rmk}

\subsection{Banach Manin triples and projective Banach Lie bialgebras}\label{24}

Finally, we recall the concepts on Banach Manin triples and investigate their connection with projective Banach Lie bialgebras.

Let $\mathfrak{g}$ be a Banach Lie algebra and $S:\mathfrak{g}\times\mathfrak{g}\to \mathfrak{g}$ be a bilinear form. Then $S$ is called continuous if it is continuous as a bilinear map, and $S$ is called invariant if it satisfies 
$$S(x,[y,z])=S([x,y],z),\quad\forall x,y,z\in \mathfrak{g}.$$ 
The bilinear form $S$ is called non-degenerate if $S(x,y)=0$ for any $y\in\mathfrak{g}$ implies $x=0$ and $S(x,y)=0$ for any $x\in \mathfrak{g}$ implies $y=0$. 

Let $\mathfrak{g}$ be a vector space with a bilinear form $S$ and $\mathfrak{g}_1$ be a subspace of $\mathfrak{g}$. Define 
$$\mathfrak{g}_1^{\bot}=\big\{x\in\mathfrak{g}\ :\ S(x,\mathfrak{g})=0\big\}.$$
Recall that the subspace $\mathfrak{g}_1$ is called \textbf{isotropic} if $\mathfrak{g}_1\subseteq \mathfrak{g}_1^{\bot}.$ And $\mathfrak{g}_1$ is called \textbf{Lagrangian} if $\mathfrak{g}_1=\mathfrak{g}_1^{\bot}.$ 

\begin{defi}
Let $\mathfrak{g},\mathfrak{g}_+$ and $\mathfrak{g}_-$ be Banach Lie algebras and $S$ be a continuous bilinear form on $\mathfrak{g}$. The triple $((\mathfrak{g},S),\mathfrak{g}_+,\mathfrak{g}_-)$ is called a Banach Manin triple if it satisfies:

\begin{itemize}
\item[(a)] $S$ is non-degenerate and invariant;
\item[(b)] $\mathfrak{g}=\mathfrak{g}_{+}\oplus\mathfrak{g}_{-}$ as Banach spaces;
\item[(c)] both $\mathfrak{g}_+$ and $\mathfrak{g}_-$ are Banach Lie subalgebras of $\mathfrak{g}$;
\item[(d)] $\mathfrak{g}_+$ and $\mathfrak{g}_-$ are isotropic with respect to $S$.
\end{itemize}
\end{defi}

Let $(\mathfrak{g},\delta)$ be a projective Banach Lie bialgebra. Denote by $\mathfrak{g}\dot{+}\mathfrak{g}^{\ast}$ the direct sum of $\mathfrak{g}$ and $\mathfrak{g}^{\ast}$ as Banach spaces but not necessarily as Lie algebras. Equip $\mathfrak{g}\dot{+}\mathfrak{g}^{\ast}$ with a bilinear form 
$$S(x+\alpha,y+\beta)=\alpha(y)+\beta(x).$$
Then one can verify  
\begin{equation}\label{PD}
[x,\alpha]=-\alpha\circ \ad_{x}+(\alpha\hat{\otimes}_{\pi} \id)\delta(x)
\end{equation}
is the unique Lie bracket on $\mathfrak{g}\dot{+}\mathfrak{g}^{\ast}$ making $((\mathfrak{g},S),\mathfrak{g},\mathfrak{g}^{\ast})$ into a Banach Manin triple. 

 The converse is not true. Not every Banach Manin triple $((\mathfrak{g},S),\mathfrak{g},\mathfrak{g}^{\ast})$ on Banach Lie algebras $\frkg$ and $\frkg^{\ast}$ gives rise to a projective Banach Lie bialgebra structure by the definition. However, if the dual map of the Lie bracket of $\mathfrak{g}^{\ast}$
 $$[\cdot,\cdot]^{\ast}:\frkg^{\ast\ast}\to (\mathfrak{g}^{\ast}\otimes\mathfrak{g}^{\ast})^{\ast}$$
 restricts to the map $\delta:\mathfrak{g}\to \mathfrak{g}\hat{\otimes}_{\pi}\mathfrak{g}$, here we use the inclusion $\mathfrak{g}^{\ast}\otimes\mathfrak{g}^{\ast}\subseteq\mathcal{L}(\mathfrak{g},\mathfrak{g};\mathbb{K})$ and Proposition \ref{UPTP}, then $(\mathfrak{g},\delta)$ is a Banach Lie bialgebra. In other words, if there is a bounded linear map $\delta:\mathfrak{g}\to \mathfrak{g}\hat{\otimes}_{\pi} \mathfrak{g}$ satisfying the condition 
 $$S(\alpha\otimes\beta,\delta(x))=S([\alpha,\beta],x),\quad \forall x\in \mathfrak{g},\ \forall \alpha,\beta\in \mathfrak{g}^{\ast}, $$ then $(\mathfrak{g},\delta)$ is a Banach Lie bialgebra.

\section{Quasi-triangular projective Banach Lie bialgebras}\label{3}

In Section \ref{31}, we recall the definition of the completion of the classical Yang-Baxter equation in the projective tensor norm, called the projective Yang-Baxter equation. This kind of equation first appeared in \cite{MX2024}. A solution of the projective Yang-Baxter equation is called a projective r-matrix. Then in Section \ref{3.2}, we prove that every projective r-matrix gives rise to a projective Banach Lie bialgebra structure.

\subsection{1-coboundaries of projective Banach Lie algebras and the completion of the classical Yang-Baxter equation}\label{31}

First, we introduce the notion of 1-coboundaries on projective Banach Lie algebras as the projective Banach Lie algebra analogue of 1-coboundaries on Lie algebras.

\begin{defi}\label{1C}
Let $\mathfrak{g}$ be a Banach Lie algebra. A projective 1-coboundary with respect to the adjoint action $\ad$ of $\mathfrak{g}$ on $\frkg\otp\frkg$ is a map $\delta:\mathfrak{g}\to \mathfrak{g}\hat{\otimes}_{\pi} \mathfrak{g}$ such that there is an element $r\in \mathfrak{g}\hat{\otimes}_{\pi}\mathfrak{g}$ satisfying 
\begin{equation}\label{DLT}
\delta(x)=dr(x)=\ad_{x} r=(\ad_{x}\hat{\otimes}_{\pi}\id+\id\hat{\otimes}_{\pi} \ad_{x})r,\quad  \forall x\in\mathfrak{g}.
\end{equation}
\end{defi}

Let $\mathfrak{g}$ be a Banach algebra and $\delta=dr$ be a projective 1-coboundary on $\mathfrak{g}$. Let $\delta^{\ast}$ be the transpose of $\delta$. To obtain a projective  Banach Lie algebra structure on $\mathfrak{g}^{\ast}$ such that the Lie bracket is given by $\delta^{\ast}$, it remains to verify:
\begin{itemize}
    \item[(a)] For any $\alpha,\beta\in \mathfrak{g}^{\ast}$, there is a $C>0$ such that
    \begin{equation}\label{TOD1}
    ||\delta^{\ast}(\alpha,\beta)||\le C||\alpha||\;||\beta||;
    \end{equation}
    \item[(b)] $dr(x)$ is skew-symmetric for any $x\in\mathfrak{g}$;
    \item[(c)] The restriction of ${\delta}^{\ast}$ on $\mathfrak{g}^{\ast}\otimes\mathfrak{g}^{\ast}$ satisfies the Jacobi identity. 
\end{itemize}
It follows from \eqref{PBL} that condition (a) holds for any projective 1-coboundary. 

In \cite{MX2024}, the topological classical Yang-Baxter equation was introduced. As the topological classical Yang-Baxter equation is endowed with the projective tensor norm, next we define the completion of the classical Yang-Baxter equation.

Let $\mathfrak{g}$ be a Banach Lie algebra and $r=\sum_{i} u_i\otimes v_i\in \mathfrak{g}\otimes \mathfrak{g}.$ In $\mathfrak{g}\otimes \mathfrak{g}\otimes\mathfrak{g}$, let $r_{12},$ $r_{13}$ and $r_{23}$ be elements in the third tensor power of the enveloping algebra of $\mathfrak{g},$ defined respectively by
\begin{equation}\label{r1}
r_{12}=\sum_{i} u_i\otimes v_i\otimes 1,
\end{equation}
\begin{equation}\label{r2}
r_{13}=\sum_{i} u_i\otimes 1\otimes v_i
\end{equation}
and 
\begin{equation}\label{r3}
r_{23}=\sum_{i}   1\otimes u_i\otimes v_i.
\end{equation}
Define
\begin{equation*}
\begin{aligned}
&[r_{12},r_{13}]=\left[\sum_{i} u_i\otimes v_i\otimes 1,\sum_{j} u_j\otimes 1\otimes v_j\right]=\sum_{i,j}[u_i,u_j]\otimes v_i\otimes v_j,\\
&[r_{12},r_{23}]=\left[\sum_{i} u_i\otimes v_i\otimes 1,\sum_{j} 1\otimes u_j\otimes v_j\right]=\sum_{i,j}u_i\otimes [v_i,u_j]\otimes v_j,\\
&[r_{13},r_{23}]=\left[\sum_{i} u_i\otimes 1\otimes v_i,\sum_{j} 1\otimes u_j\otimes v_j\right]=\sum_{i,j} u_i\otimes
u_j\otimes [v_i,v_j].
\end{aligned}
\end{equation*}
Denote $[r_{12},r_{13}]+[r_{13},r_{23}]+[r_{12},r_{23}]$ by $\CYB (r)$ or $\langle r,r\rangle.$
Then $r$ is called a solution of \textbf{the classical Yang-Baxter equation} if it satisfies
$$\CYB(r)=\langle r,r \rangle=0.$$ And $\CYB(r)=0$ is called the classical Yang-Baxter equation.

Now we show that $\CYB$ is a continuous map from $\mathfrak{g}\otimes \mathfrak{g}$ to $\mathfrak{g}\otimes \mathfrak{g}\otimes \mathfrak{g}$ in the projective norm.

\begin{lem}\label{CYBC}
Let $\mathfrak{g}$ be a Banach Lie algebra. Then the operator $\CYB:\mathfrak{g}\otimes \mathfrak{g}\to \mathfrak{g}\otimes\mathfrak{g}\otimes\mathfrak{g}$ is continuous in the projective norm.
\end{lem}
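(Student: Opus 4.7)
The plan is to recognize that $\CYB$ is a quadratic polynomial map in $r$, isolate the underlying bilinear structure, and reduce continuity to the Banach bound on the Lie bracket together with subadditivity of the projective tensor norm. Concretely, I would introduce three auxiliary bilinear maps on $\mathfrak{g}\otimes\mathfrak{g}$ by
\[
B_1(r,s)=[r_{12},s_{13}],\qquad B_2(r,s)=[r_{13},s_{23}],\qquad B_3(r,s)=[r_{12},s_{23}],
\]
where the subscripts are understood in the sense of \eqref{r1}--\eqref{r3}, so that $\CYB(r)=B_1(r,r)+B_2(r,r)+B_3(r,r)$. Each $B_i$ is visibly bilinear, so once each is bounded in the 3-fold projective norm, continuity of $\CYB$ will follow.

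Next I would bound each $B_i$ in the projective norm. Taking generic finite expansions $r=\sum_i u_i\otimes v_i$ and $s=\sum_j u'_j\otimes v'_j$, the formula
\[
B_1(r,s)=\sum_{i,j}[u_i,u'_j]\otimes v_i\otimes v'_j
\]
is an explicit finite sum of elementary tensors. Applying the Banach Lie algebra inequality $\|[x,y]\|\le C\|x\|\,\|y\|$ termwise and using the definition of $\|\cdot\|_\pi^{\otimes 3}$ yields
\[
\|B_1(r,s)\|_\pi^{\otimes 3}\le C\Big(\sum_i\|u_i\|\,\|v_i\|\Big)\Big(\sum_j\|u'_j\|\,\|v'_j\|\Big).
\]
Taking the infimum over representations of $r$ and $s$ gives $\|B_1(r,s)\|_\pi^{\otimes 3}\le C\|r\|_\pi\|s\|_\pi$, and an identical argument (permuting the roles of the three tensor slots) handles $B_2$ and $B_3$.

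Finally, I would conclude continuity of $\CYB$ by the standard polarization trick for quadratic maps. Writing
\[
\CYB(r)-\CYB(r_0)=\sum_{i=1}^{3}\bigl(B_i(r-r_0,r)+B_i(r_0,r-r_0)\bigr)
\]
and applying the bilinear bounds produces
\[
\|\CYB(r)-\CYB(r_0)\|_\pi^{\otimes 3}\le 3C\,\|r-r_0\|_\pi\bigl(\|r\|_\pi+\|r_0\|_\pi\bigr),
\]
so $\CYB$ is Lipschitz on bounded sets, in particular continuous. I do not anticipate any real obstacle: the only conceptual step is separating the two occurrences of $r$ into two formal arguments so that the Banach Lie bracket estimate can be combined with the factorization property of the projective tensor norm.
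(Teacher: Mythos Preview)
Your proposal is correct and follows essentially the same approach as the paper: bound each of the three bracket terms by $C\|r\|_\pi\|s\|_\pi$ using the Banach Lie estimate and the definition of the projective norm, then conclude continuity. The paper simply records the resulting inequality $\|\CYB(r)\|_\pi\le 3C\|r\|_\pi^2$ and declares continuity, whereas you make the underlying bilinear decomposition and polarization step explicit, which is arguably a cleaner justification of continuity away from the origin.
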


\begin{proof}
As $\mathfrak{g}$ is a Banach Lie algebra, there exists a $C>0$ such that 
$$\|[x,y]\|\le C\|x\|\ \|y\|,\quad \forall x,y\in\mathfrak{g}.$$ 
For any $r\in\mathfrak{g}\otimes \mathfrak{g}$ with representation $r=\sum_{i} u_i\otimes v_i$, using the above notations, we have 
$$
\begin{aligned}
\|\CYB(r)\|_{\pi}=&\left\|\sum_{i,j}\left( [u_i,u_j]\otimes v_i\otimes v_j+u_i\otimes [v_i,u_j]\otimes v_j+u_i\otimes u_j\otimes [v_i,v_j]\right)\right\|_{\pi}\\
\le& 3C\sum_{i,j} \left(\|u_i\|\ \|v_i\|\right)\left(\|u_j\|\ \|v_j\|\right)=3C\left(\sum_{i} \|u_i\|\ \|v_i\|\right)\left(\sum_{j}\|u_j\| \ \|v_j\|\right),
\end{aligned}
$$
which implies 
$$\|\CYB(r)\|_{\pi}\le 3C\|r\|^{2}_{\pi}.$$
Hence $\CYB$ is continuous in the projective norm. 
\end{proof}

Denote the unique extension of $\CYB$ from $\mathfrak{g}\hat{\otimes}_{\pi} \mathfrak{g}$ to $\mathfrak{g}\hat{\otimes}_{\pi}\mathfrak{g}\hat{\otimes}_{\pi} \mathfrak{g}$ by $\hat{\CYB}.$ An element $r\in \mathfrak{g}\hat{\otimes}_{\pi} \mathfrak{g}$ is called a solution of \textbf{the projective Yang-Baxter equation} if it satisfies 
$\hat{\CYB}(r)=0.$

Then we define the projective solutions of the classical Yang-Baxter equation.

\begin{defi}
Let $\mathfrak{g}$ be a Banach Lie algebra. With the above notations, an element $r\in \mathfrak{g}\hat{\otimes}_{\pi} \mathfrak{g}$ is called a \textbf{projective solution} of the classical Yang-Baxter equation if there exists a sequence $(r_n\in \mathfrak{g}\otimes_{\pi} \mathfrak{g})_{n=0}^{\infty}$, such that 
\begin{equation*}
\begin{array}{lrl}
{\lim\limits_{n\to +\infty}}r_n=r
\end{array}
\quad
\textrm{and}\quad
\begin{array}{lrl}
{\lim\limits_{n\to +\infty}\langle r_{n},r_{n}\rangle}=0
\end{array}
\end{equation*}
in the projective norm.
\end{defi}

Next, we give equivalent characterizations of the solutions of the projective Yang-Baxter equation.

\begin{pro}
Let $\mathfrak{g}$ be a Banach Lie algebra and $r$ be an element of $\mathfrak{g}\hat{\otimes}_{\pi} \mathfrak{g}$. Then the following assertions are equivalent:
\begin{itemize}
\item[{\rm(a)}] $r$ is a solution of the projective Yang-Baxter equation.
\item[{\rm(b)}] $r$ is a projective solution of the classical Yang-Baxter equation.
\item[{\rm(c)}] There exists bounded sequences $(x_k\in \frkg)$ and $(y_k\in \frkg)$ such that $\sum_{k=0}^{\infty} x_k\otimes y_k$ converges to $r$ 
and $\lim\limits_{n\to +\infty} \left\langle \sum_{i=0}^{n} x_{i}\otimes y_{i},\sum_{j=0}^{n} x_{j}\otimes y_{j}\right\rangle=0$ in the projective norm.
\end{itemize}
\end{pro}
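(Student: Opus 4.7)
The plan is to verify the cycle (c) $\Rightarrow$ (b) $\Rightarrow$ (a) $\Rightarrow$ (c), with (a) $\Leftrightarrow$ (b) being essentially a restatement of the continuity of $\hat{\CYB}$. The tools needed are exactly Lemma \ref{CYBC} (which produces the unique continuous extension $\hat{\CYB}$ on $\mathfrak{g}\otp\mathfrak{g}$), the density of $\mathfrak{g}\otimes\mathfrak{g}$ in $\mathfrak{g}\otp\mathfrak{g}$, and the series representation theorem for projective tensor products cited in the remark after Proposition \ref{UPTP}.

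For (a) $\Leftrightarrow$ (b): if $r$ satisfies the projective Yang-Baxter equation, i.e.\ $\hat{\CYB}(r)=0$, pick any $r_n\in \mathfrak{g}\otimes\mathfrak{g}$ with $r_n\to r$ by density; continuity gives $\langle r_n,r_n\rangle = \CYB(r_n)=\hat{\CYB}(r_n)\to \hat{\CYB}(r)=0$, so (b) holds. Conversely, for any sequence as in (b), $\hat{\CYB}(r)=\lim \hat{\CYB}(r_n)=\lim \langle r_n,r_n\rangle = 0$.

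The implication (c) $\Rightarrow$ (b) is immediate: setting $r_n=\sum_{i=0}^n x_i\otimes y_i$, the two hypotheses of (c) are precisely that $r_n\to r$ and $\langle r_n,r_n\rangle\to 0$ in the projective norm.

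The remaining step, (a) $\Rightarrow$ (c), uses the representation theorem to write $r=\sum_{k=0}^\infty u_k\otimes v_k$ with $\sum_k \|u_k\|\,\|v_k\|<\infty$. After discarding any zero terms, I would rescale by $x_k=\sqrt{\|v_k\|/\|u_k\|}\,u_k$ and $y_k=\sqrt{\|u_k\|/\|v_k\|}\,v_k$, so that $x_k\otimes y_k=u_k\otimes v_k$ while $\|x_k\|=\|y_k\|=\sqrt{\|u_k\|\,\|v_k\|}\to 0$; thus both sequences are bounded and $\sum x_k\otimes y_k$ still converges to $r$ in the projective norm. Applying $\hat{\CYB}$ to the partial sums $s_n=\sum_{i=0}^n x_i\otimes y_i\to r$ and invoking continuity together with $\hat{\CYB}(r)=0$ produces $\langle s_n,s_n\rangle=\hat{\CYB}(s_n)\to 0$, which is exactly the condition in (c). No step presents a genuine obstacle once Lemma \ref{CYBC} is in hand; the only mildly delicate move is the scalar rescaling needed to bound both factor sequences simultaneously, which is a standard device in the theory of projective tensor products.
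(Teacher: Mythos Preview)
Your proposal is correct and follows essentially the same route as the paper: the paper proves (a)~$\Leftrightarrow$~(b) by appealing to the definition together with Lemma~\ref{CYBC}, and proves (b)~$\Leftrightarrow$~(c) by citing \cite[Proposition~2.8]{RYAN}, which is exactly what you do in expanded form. Your rescaling step for (a)~$\Rightarrow$~(c) is a standard device and is in fact already built into the full statement of Ryan's Proposition~2.8 (which provides bounded---indeed null---factor sequences directly), so that part could be shortened to a direct citation.
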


\begin{proof}
By the definition and Lemma \ref{CYBC}, one can readily verify that (a) is equivalent to (b). It follows from \cite[Proposition 2.8]{RYAN} that (b) is equivalent to (c).
\end{proof}

Then let us give some examples of solutions of the projective Yang-Baxter equation.

\begin{ex}\label{E1}
Let $\mathcal{H}$ be a separable Hilbert space and $(e_k)_{k=0}^{\infty}$ be a orthonormal basis of $\mathcal{H}.$ Denote the set of bounded operators on $\mathcal{H}$ by $\mathcal{L}(\mathcal{H})$, then $\mathcal{L}(\mathcal{H})$ is a Banach Lie algebra with the Lie bracket given by the commutators.

And for any $i,j\in \mathbb{N},$ define the operator $E_{ij}:\mathcal{H}\to \mathcal{H}$ by 
$$E_{ij}(x)=\langle e_{i},x\rangle e_j,\quad\forall x\in \mathcal{H}.$$
Then for any $(a_i)_{i=0}^{\infty}\in \ell^{1},$
the operator $\sum_{i=0}^{\infty}  a_{i} (E_{ii}\otimes E_{(i+1)(i+1) }-E_{(i+1)(i+1)}\otimes E_{ii})$ is a projective solution of the classical Yang-Baxter equation.
\end{ex}

\begin{ex}\label{E2}
Let $\mathcal{H}$ be a separable Hilbert space and $(e_k)_{k=0}^{\infty}$ be a orthonormal basis of $\mathcal{H}.$ 
If the sequence $(a_{ij}\in \mathbb{K })_{0\le i<j}^{\infty}$ satisfies
\begin{itemize}
\item[(a)] $\sum_{0\le i<j}^{\infty} |a_{ij}|<\infty$;
\item[(b)] $a_{ij}a_{jk}=a_{ik}a_{ij} + a_{jk}a_{ik},$
\end{itemize}
then the operator $\sum_{0\le i<j}^{\infty}a_{ij} (E_{ij}\otimes E_{ji}-E_{ji}\otimes E_{ji})$ lies in $\mathcal{L}(\mathcal{H})$, and it is a skew-symmetric solution of the projective Yang-Baxter equation. For example, we can take $a_{ij}=\frac{1}{2^{j}-2^{i}}$.  
\end{ex}

\begin{ex}\label{E3}
Let $(\frkg_{i})_{i=1}^{\infty}$
be a family of Banach Lie algebras of finite dimension. And let $\frkg=\bigoplus_{i} \frkg_{i}$ be the set of all $(x_{i})\in \prod_{i} \frkg_{i}$ such that
\begin{equation}\label{IPN}
 \|(x_{i})\|=\sup_{i} \|x_{i}\|<\infty.
\end{equation}
Then $\frkg$ is a Banach Lie algebra such that as a Banach space, the norm is given by \eqref{IPN}, and as a Lie algebra, it is the direct sum of $(\frkg_{i}).$

Let $(\delta_{i}\in \frkg_{i}\otimes \frkg_{i})$ be a family of solutions of the projective Yang-Baxter equation and $(a_{i})_{i=1}^{\infty}\in \ell^{1}$. Then $\delta=\sum_{i} a_{i}\delta_{i}$ lies in $\frkg\otp \frkg$ and it is a solution of the projective Yang-Baxter equation. Moreover, if for any $i,$ $\delta_{i}$ is skew-symmetric, then $\delta$ is also skew-symmetric.
\end{ex}


\subsection{Quasi-triangular projective Banach Lie bialgebra}\label{3.2}

Let $\mathfrak{g}$ be a projective Banach Lie algebra and $a\in \mathfrak{g}\otimes\mathfrak{g}$ be skew-symmetric. Let us recall the definition of \textbf{the algebraic Schouten bracket} of $a$ given in \cite{K} and \cite{MX}. Here we use the Einstein notation, that is, for any $a\in \frkg\otimes\frkg,$ we write $a=\sum_{i}^{n} x_i\otimes y^{i}$ simply as $a=x_i\otimes y^{i}$. And denote the set of skew-symmetric elements of $\mathfrak{g}\otimes \mathfrak{g}$ by $\mathfrak{g}\land\mathfrak{g}$. The algebraic Schouten bracket of $a$ is the element
$$[\![a, a]\!]=-2\left([y^{i},y^{j}]\otimes x_i\otimes x_j+x_{j}\otimes [y^{i},y^{j}]\otimes x_{j}+x_{i}\otimes x_{j}\otimes [y^{i}, y^{j}]\right)\in \mathfrak{g}\land\mathfrak{g}\land\mathfrak{g}.$$ And for any $r\in \mathfrak{g}\otimes \mathfrak{g}$, define $\underline{r}:\mathfrak{g}^{\ast}\to \mathfrak{g}$ by $\underline{r}(\alpha)=(\alpha\otimes\id)(r)$ for any $\alpha\in\mathfrak{g}^{\ast}.$

One can readily verify that $[\![a,a]\!]$ is the unique element satisfying the following identity:
$$
\begin{aligned}
\langle\alpha\otimes \beta\otimes\gamma,[\![a,a]\!]\rangle&=-2\left(\langle \alpha,[\underline{a}(\beta),\underline{a}(\gamma)]\rangle+\langle \beta,[\underline{a}(\gamma),\underline{a}(\alpha)]\rangle+\langle \gamma,[\underline{a}(\alpha),\underline{a}(\beta)]\rangle\right)\\
&=-2\circlearrowleft \langle \alpha,[\underline{a}(\beta),\underline{a}(\gamma)]\rangle,
\end{aligned}
$$
for any $\alpha,\beta,\gamma\in \mathfrak{g}^{\ast}$, where $\circlearrowleft$ denotes the summation over the circular permutations of $\alpha,\beta,\gamma.$

It was shown in \cite{K} and \cite{MX} that 
\begin{equation}\label{JCB}
\langle \alpha\otimes\beta \otimes \gamma,d[\![a,a]\!](x)\rangle
=2\circlearrowleft\langle da^{\ast} (da^{\ast}(\alpha\otimes\beta)\otimes \gamma),x\rangle
\end{equation}
for any $\alpha,\beta,\gamma\in \mathfrak{g}^{\ast}$ and $x\in\mathfrak{g}.$

Moreover, by \cite{K,MX}, we have 
\begin{equation}\label{STCY}
\CYB(a)=\langle a,a \rangle=-\frac{1}{2}[\![a,a]\!].
\end{equation}

In \cite{K} and \cite[Proposition 1.1.5]{MX}, it was shown that the restriction of the linear map $da^{\ast}$ to $\mathfrak{g}\otimes \mathfrak{g}$ satisfies the Jacobi identity for any skew-symmetric element $a\in \mathfrak{g}\otimes \mathfrak{g}$. Similarly, we have the following result.

\begin{pro}\label{JK}
Let $\mathfrak{g}$ be a Banach Lie algebra and $a$ be a skew-symmetric element of $\mathfrak{g}\hat{\otimes}_{\pi}\mathfrak{g}.$ Then $da^{\ast}$ satisfies the Jacobi identity if and only if there exists a sequence $(a_n\in \mathfrak{g}\otimes \mathfrak{g})$ of skew-symmetric elements such that $\lim_{n\to +\infty} a_n=a$ and $\lim_{n\to +\infty} \ad_{x}[\![a_n,a_n]\!]=0$ for any $x\in \mathfrak{g}$ in the projective norm.
\end{pro}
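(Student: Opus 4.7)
The plan is to use identity \eqref{JCB} as the bridge between the Jacobi identity for $da^{\ast}$ and the norm behavior of $\ad_x[\![a_n,a_n]\!]$, and to extend the relevant operations continuously from $\mathfrak{g}\otimes\mathfrak{g}$ to the projective completion. The key continuity facts I would invoke are: the Schouten square is essentially $-2\CYB$ on skew-symmetric elements, hence extends continuously by Lemma~\ref{CYBC}; and $\ad_x$ extends continuously to both $\mathfrak{g}\hat{\otimes}_\pi\mathfrak{g}$ and $\mathfrak{g}\hat{\otimes}_\pi\mathfrak{g}\hat{\otimes}_\pi\mathfrak{g}$.

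For the sufficiency direction, I would fix a sequence $(a_n)$ as in the hypothesis and apply \eqref{JCB} to each $a_n\in\mathfrak{g}\otimes\mathfrak{g}$ to get
\begin{equation*}
\langle\alpha\otimes\beta\otimes\gamma,\ad_x[\![a_n,a_n]\!]\rangle
=2\circlearrowleft\langle da_n^{\ast}(da_n^{\ast}(\alpha\otimes\beta)\otimes\gamma),x\rangle
\end{equation*}
for any $\alpha,\beta,\gamma\in\mathfrak{g}^{\ast}$ and $x\in\mathfrak{g}$. The left-hand side tends to $0$ by hypothesis. For the right-hand side, the estimate $\|da_n^{\ast}(\alpha\otimes\beta)-da^{\ast}(\alpha\otimes\beta)\|_{\mathfrak{g}^{\ast}}\le C\,\|\alpha\|\,\|\beta\|\,\|a_n-a\|_\pi$ (coming from the boundedness of $\ad_x$ on $\mathfrak{g}\hat{\otimes}_\pi\mathfrak{g}$) upgrades pointwise convergence to norm convergence in $\mathfrak{g}^{\ast}$, and iterating the same estimate yields convergence of the nested expression $da_n^{\ast}(da_n^{\ast}(\cdot)\otimes\cdot)$ to the corresponding expression for $da^{\ast}$. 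Since $\mathfrak{g}$ separates $\mathfrak{g}^{\ast}$, the resulting cyclic vanishing is exactly the Jacobi identity for $da^{\ast}$.

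For the necessity direction, I would first pick a nuclear representation $a=\sum_{i=1}^{\infty}u_i\otimes v_i$ via \cite[Proposition~2.8]{RYAN}, form $\tilde{a}_n=\sum_{i=1}^{n}u_i\otimes v_i$, and set $a_n=\frac{1}{2}(\tilde{a}_n-\hat{\tau}\tilde{a}_n)$, which is skew-symmetric and converges to $a$ in projective norm since $a$ is skew. By the continuity of $[\![\cdot,\cdot]\!]$ and $\ad_x$, identity \eqref{JCB} extends to $a$, and the Jacobi hypothesis forces $\langle\alpha\otimes\beta\otimes\gamma,\ad_x[\![a,a]\!]\rangle=0$ for every $\alpha,\beta,\gamma\in\mathfrak{g}^{\ast}$ and $x\in\mathfrak{g}$. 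The remaining task is to convert this cotensorial vanishing into the norm-vanishing $\|\ad_x[\![a_n,a_n]\!]\|_\pi\to 0$.

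The principal obstacle is precisely this last step: in the infinite-dimensional setting, elementary tensors $\alpha\otimes\beta\otimes\gamma$ need not separate points of $\mathfrak{g}\hat{\otimes}_\pi\mathfrak{g}\hat{\otimes}_\pi\mathfrak{g}$, so cotensorial vanishing is a priori weaker than norm vanishing. I would handle this by working directly on the partial sums: expand $\ad_x[\![a_n,a_n]\!]$ explicitly in terms of $(u_i,v_i)_{i\le n}$, invoke the Jacobi identity for $da^{\ast}$ applied to functionals dual to $(u_i,v_i)$ to cancel the leading contributions block by block, and estimate the residual terms by tails $\sum_{i>n}\|u_i\|\,\|v_i\|$ of the nuclear representation. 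This should give a quantitative bound on $\|\ad_x[\![a_n,a_n]\!]\|_\pi$ forcing it to zero without passing through any abstract separation argument.
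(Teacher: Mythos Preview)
Your sufficiency argument is correct and is essentially the paper's: apply \eqref{JCB} to each $a_n$, use continuity of $x\mapsto da_n$ (hence of $da_n^{\ast}$) to pass to the limit on the Jacobiator side, and conclude. The paper writes out the same telescoping estimate you describe.

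For the necessity direction you have correctly isolated the only nontrivial point, but your proposed workaround does not work, and it is worth noting that the paper's own proof does not close this gap either. The paper simply picks any skew-symmetric sequence $a_n\to a$, uses \eqref{JCB} and the same telescoping identity to obtain
\[
\lim_{n\to\infty}\langle\alpha\otimes\beta\otimes\gamma,\,\ad_x[\![a_n,a_n]\!]\rangle=2\circlearrowleft\langle da^{\ast}(da^{\ast}(\alpha\otimes\beta)\otimes\gamma),x\rangle=0
\]
for all $\alpha,\beta,\gamma\in\mathfrak{g}^{\ast}$, and then writes ``which finishes the proof.'' In other words, the paper is content with showing that the pairing against elementary cotensors tends to zero; it never upgrades this to $\|\ad_x[\![a_n,a_n]\!]\|_\pi\to 0$. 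By Lemma~\ref{CYBC} and \eqref{STCY} one already knows that $\ad_x[\![a_n,a_n]\!]$ converges in norm to some element $L\in\mathfrak{g}^{\hat\otimes_\pi 3}$, and the displayed computation shows $L$ lies in the kernel of the canonical map $p$ of \eqref{GA} (extended to triple tensors). Concluding $L=0$ is exactly the separation issue you flagged, and without an approximation-property assumption on $\mathfrak{g}$ it is a genuine gap.

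Your attempt to circumvent this by ``functionals dual to $(u_i,v_i)$'' and block-by-block cancellation is not viable: in a general Banach space a nuclear representation $a=\sum u_i\otimes v_i$ has no associated dual system, and the Jacobi identity for $da^{\ast}$ is a statement about arbitrary $\alpha,\beta,\gamma\in\mathfrak{g}^{\ast}$, not one that can be localized to finitely many indices of a chosen representation. There is no mechanism by which plugging special functionals into the Jacobi identity produces cancellation \emph{inside} $\mathfrak{g}^{\hat\otimes_\pi 3}$ rather than merely in its image under $p$; the projective norm of $\ad_x[\![a_n,a_n]\!]$ involves an infimum over all representations, and the residual-tail estimate you sketch controls only one particular representation. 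So the obstacle you identified is real, your fix does not remove it, and the paper's proof simply does not address it.
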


\begin{proof}
Assume  that there exists a sequence of skew-symmetric elements $(a_n\in \mathfrak{g}\otimes \mathfrak{g})$ such that $\lim_{n\to +\infty} a_n=a$ and $\lim_{n\to +\infty} d[\![a_n,a_n]\!]=0$ in the projective norm. As for any $x\in \mathfrak{g}$, the operator $\ad_{x}$ is bounded on $\mathfrak{g}\hat{\otimes}_{\pi}\mathfrak{g}$, we have $\lim_{n\to\infty} da_n=da$ in the projective norm. It follows from Proposition \ref{UPTP}
that $\lim_{n\to \infty} da_{n}^{\ast}=da^{\ast}$ in the norm of $\mathcal{B}(\mathfrak{g}^{\ast},\mathfrak{g}^{\ast};\mathbb{K})$.

Then for any $\alpha,\beta,\gamma\in\mathfrak{g}^{\ast}$ and $x\in\mathfrak{g}$, we have
\begin{equation}\label{JCB1}
\begin{aligned}
&\langle da^{\ast} (da^{\ast}(\alpha\otimes\beta)\otimes \gamma),x\rangle-\langle da_n^{\ast} (da_n^{\ast}(\alpha\otimes\beta)\otimes \gamma),x\rangle\\
=&\left(\langle da^{\ast} (da^{\ast}(\alpha\otimes\beta)\otimes \gamma),x\rangle-\langle da_n^{\ast} (da^{\ast}(\alpha\otimes\beta)\otimes \gamma),x\rangle\right)\\&+\left(\langle da_n^{\ast} (da^{\ast}(\alpha\otimes\beta)\otimes \gamma),x\rangle-\langle da_n^{\ast} (da_n^{\ast}(\alpha\otimes\beta)\otimes \gamma),x\rangle\right)\\
=&\langle (da^{\ast}-da_{n}^{\ast}) (da^{\ast}(\alpha\otimes\beta)\otimes \gamma),x\rangle+\langle da_n^{\ast} ((da^{\ast}-da_n^{\ast})(\alpha\otimes\beta)\otimes \gamma),x\rangle. 
\end{aligned}
\end{equation}
Taking the limit and by \eqref{JCB}, we have 
\begin{align*}
\circlearrowleft\langle da^{\ast} (da^{\ast}(\alpha\otimes\beta)\otimes \gamma),x\rangle=&\lim_{n\to\infty} \circlearrowleft\langle  da_n^{\ast} (da_n^{\ast}(\alpha\otimes\beta)\otimes \gamma),x\rangle\\
=&\lim_{n\to\infty}\frac{1}{2}\langle \alpha\otimes\beta \otimes \gamma,d[\![a_n,a_n]\!](x)\rangle=0.
\end{align*}

Conversely, if $da^{\ast}$ satisfies the Jacobi identity, 
by definition, there exists a sequence of skew-symmetric elements $(a_n\in \mathfrak{g}\otimes\mathfrak{g})$ such that $\lim_{n\to+\infty} a_n=a$ in the projective norm. Then it follows from \eqref{JCB} and \eqref{JCB1} that 
\begin{align*}
\lim_{n\to\infty}\langle \alpha\otimes\beta \otimes \gamma,d[\![a_n,a_n]\!](x)\rangle=&2\lim_{n\to\infty} \circlearrowleft\langle  da_n^{\ast} (da_n^{\ast}(\alpha\otimes\beta)\otimes \gamma),x\rangle\\
=&2\circlearrowleft\langle  da^{\ast} (da^{\ast}(\alpha\otimes\beta)\otimes \gamma),x\rangle=0,
\end{align*}
which finishes the proof.
\end{proof}

Let $\mathfrak{g}$ be a Banach algebra and $r\in\mathfrak{g}\hat{\otimes}_{\pi}\mathfrak{g}$. The element $r$ is called $\ad$-invariant if $dr=\ad_{x}^{(2)}r=0$ for any $x\in\mathfrak{g}$. The following is the main result of this section. 

\begin{thm}\label{T1}
Let $\mathfrak{g}$ be a Banach Lie algebra.
\begin{itemize}
\item[{\rm(a)}]  If there is a sequence of symmetric elements $(s_n\in \mathfrak{g}\otimes\mathfrak{g})_{n=0}^{\infty}$ such that $\lim_{n\to +\infty} \ad_{x} s_n=0$ in the projective norm for any $x\in \mathfrak{g}$ and  $(s_n\in \mathfrak{g}\otimes\mathfrak{g})_{n=0}^{\infty}$ is bounded in the projective norm, then $\lim_{n\to +\infty} \ad_{x}\langle s_n,s_n\rangle=0$ in the projective norm.
\item[{\rm(b)}] Let $r=a+s\in \mathfrak{g}\hat{\otimes}_{\pi} \mathfrak{g},$ where $a$ is skew-symmetric and s is symmetric and $\ad$-invariant. Let $(r_n\in \mathfrak{g}\otimes \mathfrak{g})_{n=0}^{\infty}$ be a sequence such that
$\lim_{n\to +\infty} r_n=r$ in the projective norm and $a_n,s_n$ be the skew-symmetric and the symmetric parts of $r_n$ for any $n\geq 0$ respectively. If $\lim_{n\to +\infty}\langle r_n,r_n\rangle=0,$ then $\lim_{n\to+\infty}\ad_{x}[\![a_n,a_n]\!]=0$ in the projective norm for any $x\in \mathfrak{g}$.
\end{itemize}
\end{thm}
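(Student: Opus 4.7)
The overall strategy is to exploit a derivation identity for a bilinear extension of $\CYB$. Define
\begin{equation*}
\langle u, v\rangle = [u_{12}, v_{13}] + [u_{12}, v_{23}] + [u_{13}, v_{23}]
\end{equation*}
by the same recipe as in \eqref{r1}--\eqref{r3}, so that $\CYB(r) = \langle r, r\rangle$. The argument in Lemma \ref{CYBC} immediately yields $\|\langle u, v\rangle\|_{\pi} \le 3C\|u\|_{\pi}\|v\|_{\pi}$, so $\langle \cdot, \cdot\rangle$ extends to a continuous bilinear map $\frkg\otp\frkg \times \frkg\otp\frkg \to \frkg\otp\frkg\otp\frkg$. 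Because $\ad_x$ is a derivation of the Lie bracket, a direct expansion in any representations $u = \sum u_i \otimes u^i$, $v = \sum v_j \otimes v^j$ gives the Leibniz-type identity
\begin{equation*}
\ad^{(3)}_{x}\langle u, v\rangle = \langle \ad^{(2)}_{x} u, v\rangle + \langle u, \ad^{(2)}_{x} v\rangle.
\end{equation*}

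Part (a) is then immediate: with $u = v = s_n$ the Leibniz identity yields
\begin{equation*}
\ad^{(3)}_{x}\langle s_n, s_n\rangle = \langle \ad^{(2)}_{x} s_n, s_n\rangle + \langle s_n, \ad^{(2)}_{x} s_n\rangle,
\end{equation*}
and each right-hand summand is bounded in projective norm by $3C\|\ad^{(2)}_{x} s_n\|_{\pi}\|s_n\|_{\pi}$, which tends to zero since $(s_n)$ is bounded and $\ad^{(2)}_{x} s_n \to 0$.

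For part (b), I would decompose $\langle r_n, r_n\rangle = \langle a_n, a_n\rangle + \langle a_n, s_n\rangle + \langle s_n, a_n\rangle + \langle s_n, s_n\rangle$ by bilinearity and apply $\ad^{(3)}_{x}$; using $\langle a_n, a_n\rangle = -\tfrac{1}{2}[\![a_n, a_n]\!]$ from \eqref{STCY}, this rearranges as
\begin{equation*}
-\tfrac{1}{2}\ad^{(3)}_{x}[\![a_n, a_n]\!] = \ad^{(3)}_{x}\langle r_n, r_n\rangle - \ad^{(3)}_{x}\langle s_n, s_n\rangle - \ad^{(3)}_{x}\bigl(\langle a_n, s_n\rangle + \langle s_n, a_n\rangle\bigr).
\end{equation*}
The first right-hand term tends to zero by hypothesis and boundedness of $\ad^{(3)}_{x}$; the second tends to zero by part (a), whose assumptions hold because $s_n \to s$ is bounded and $\ad^{(2)}_{x} s_n \to \ad^{(2)}_{x} s = 0$ by $\ad$-invariance of $s$. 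Applying the Leibniz identity to the mixed term and using that $\ad^{(2)}_{x} s_n \to 0$ together with boundedness of $(a_n)$ and $(s_n)$, the mixed term reduces, modulo sequences converging to zero, to $\langle b_n, s\rangle + \langle s, b_n\rangle$ with $b_n := \ad^{(2)}_{x} a_n$.

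The main obstacle is the algebraic identity $\langle b, s\rangle + \langle s, b\rangle = 0$ for any skew-symmetric $b \in \frkg\otp\frkg$ and any symmetric, $\ad$-invariant $s \in \frkg\otp\frkg$; note that $b_n$ is skew-symmetric because $\ad^{(2)}_{x}$ commutes with $\hat\tau$. By continuity it suffices to verify the identity on $\frkg\otimes\frkg$. Writing $b = b_k \otimes b^k$ and $s = s_j \otimes s^j$, the $\ad$-invariance relation $\sum_j [y, s_j]\otimes s^j = -\sum_j s_j \otimes [y, s^j]$, applied in the two slots occupied by $s_j, s^j$, cancels two of the three summands in each of $\langle b, s\rangle$ and $\langle s, b\rangle$, leaving $\sum [b_k, s_j]\otimes b^k\otimes s^j$ and $\sum s_j\otimes b_k\otimes [s^j, b^k]$ respectively. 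Reapplying $\ad$-invariance to the slot pair $(1,3)$ of the first expression, and using the consequence $\sum_k b^k\otimes [b_k, y] + \sum_k b_k\otimes [b^k, y] = 0$ of skew-symmetry of $b$ (valid for any $y \in \frkg$), the two remaining expressions cancel. The delicate bookkeeping lies in consistently transporting the $\ad$-invariance identity, which lives in $\frkg\otp\frkg$, into the correct slot configuration inside $\frkg^{\otp 3}$.
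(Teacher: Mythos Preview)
Your argument is correct and takes a genuinely different route from the paper. The paper proves (a) by brute-force expansion: it writes out $\ad_x\langle s_n,s_n\rangle$ term by term, applies the Jacobi identity inside each bracket, and regroups to exhibit a factor $\|ds_n(x)\|_\pi$ against a bounded factor. You instead isolate the single structural fact underlying all of this, the derivation identity $\ad_x^{(3)}\langle u,v\rangle=\langle\ad_x^{(2)}u,v\rangle+\langle u,\ad_x^{(2)}v\rangle$, which makes (a) a one-line estimate. For (b), the paper never applies $\ad_x$ to the mixed terms: it pairs the six cross terms of $\langle r_n,r_n\rangle$ into three expressions of the shape $y^{n,i}\otimes(\ad_{x_{n,i}}s_n)$ (up to slot permutation), rewrites $\ad_{x_{n,i}}s_n=\ad_{x_{n,i}}(s_n-s)$ using $\ad$-invariance of $s$, and bounds by $\|a_n\|_\pi\|s_n-s\|_\pi$; thus the mixed terms themselves, not merely their image under $\ad_x$, tend to zero. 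Your approach trades this slightly stronger intermediate conclusion for the cleaner algebraic identity $\langle b,s\rangle+\langle s,b\rangle=0$ for skew $b$ and $\ad$-invariant $s$. One small remark: your phrase ``by continuity it suffices to verify the identity on $\frkg\otimes\frkg$'' is not quite the right justification, since approximants to $s$ need not be $\ad$-invariant; but your actual computation goes through verbatim with $b=b_n\in\frkg\otimes\frkg$ a finite sum and $s\in\frkg\otp\frkg$, because each use of $\ad$-invariance has the form $b_k\otimes(\ad_{b^k}^{(2)}s)=0$ with finitely many $b_k$, so no limiting argument is needed.
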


\begin{proof}
As $\mathfrak{g}$ is a Banach Lie algebra, there is a $C>0$ such that $\|[x,y]\|\le C\|x\|\ \|y\|$ for any $x,y\in \mathfrak{g}$.

(a)  Let $(s_n\in \mathfrak{g}\otimes\mathfrak{g})_{n=0}^{\infty}$ be a sequence of symmetric elements such that $\lim_{n\to +\infty} \ad_{x}s_n=0$ for any $x\in \frkg$ in the projective norm. Using the Einstein notation, we can write $s_n=\sum_{i} u_{n,i}\otimes v^{n,i}$ as $s_n=u_{n,i}\otimes v^{n,i}$ simply for any $n\ge 0$. By the definition and the symmetry of $s_n$, we have 
\begin{equation}\label{AD3SN}
\begin{aligned}
&d\langle s_n,s_n\rangle(x)=\ad_{x}\langle s_n,s_n\rangle\\
=&\ad_{x}\left([u_{n,i},u_{n,j}]\otimes v^{n,i}\otimes v^{n,j}+u_{n,i}\otimes[v^{n,i}, u_{n,j}]\otimes v^{n,j}+u_{n,i}\otimes u_{n,j}\otimes [v^{n,i}, v^{n,j}]\right)\\
=&\ad_{x}\left([u_{n,i},u_{n,j}]\otimes v^{n,i}\otimes v^{n,j}+v^{n,i}\otimes[u_{n,i}, u_{n,j}]\otimes v^{n,j}+v^{n,i}\otimes v^{n,j}\otimes [u_{n,i}, u_{n,j}]\right).
\end{aligned}
\end{equation}
By the Jacobi identity and the associativity of projective tensor products, we have 
\begin{equation*}
\begin{aligned}
&\ad_{x}\left([u_{n,i},u_{n,j}]\otimes v^{n,i}\otimes v^{n,j}\right)\\
=&[x,[u_{n,i},u_{n,j}]]\otimes v^{n,i}\otimes v^{n,j}+[u_{n,i},u_{n,j}]\otimes [x,v^{n,i}]\otimes v^{n,j}+[u_{n,i},u_{n,j}]\otimes v^{n,i}\otimes [x,v^{n,j}]\\
=&[u_{n,i},[x,u_{n,j}]]\otimes v^{n,i}\otimes v^{n,j}-[u_{n,j},[x,u_{n,i}]]\otimes v^{n,i}\otimes v^{n,j}+[u_{n,i},u_{n,j}]\otimes [x,v^{n,i}]\otimes v^{n,j}\\&+[u_{n,i},u_{n,j}]\otimes v^{n,i}\otimes [x,v^{n,j}].
\end{aligned}
\end{equation*}
It follows that 
\begin{equation*}
\begin{aligned}
&\big{\|}\ad_{x}\left([u_{n,i},u_{n,j}]\otimes v^{n,i}\otimes v^{n,j}\right)\big{\|}_{\pi}\\
\le & \big{\|}[u_{n,i},[x,u_{n,j}]]\otimes v^{n,i}\otimes v^{n,j}+[u_{n,i},u_{n,j}]\otimes v^{n,i}\otimes [x,v^{n,j}] \big{\|}_{\pi}+\big{\|}[u_{n,j},[x,u_{n,i}]]\otimes v^{n,i}\otimes v^{n,j}\\&+[u_{n,j},u_{n,i}]\otimes [x,v^{n,i}]\otimes v^{n,j}\big{\|}_{\pi}\\
\le &\big{\|}\left(\ad_{u_{n,i}}\otimes\id\otimes \id\right)\left(\ad_{x}(u_{n,j}\otimes v^{n,j})\otimes v^{n,i}\right)\big{\|}_{\pi}+\big{\|}\left(\ad_{u_{n,j}}\otimes\id\otimes \id\right)\left(\ad_{x}(u_{n,i}\otimes v^{n,i})\otimes v^{n,j}\right)  \big{\|}_{\pi}\\
\le& 2C^{2}\ \|u_{n,i}\|\;\|v^{n,i}\|\;\|\ad_{x} (u_{n,j}\otimes v^{n,j})\|_{\pi} \le 2C^{2}\ \|s_{n}\|\;\|ds_{n}(x)\|_{\pi}.
\end{aligned}
\end{equation*}
Then by the definition of projective tensor norms, we have 
$$\big{\|}\ad_{x}\left([u_{n,i},u_{n,j}]\otimes v^{n,i}\otimes v^{n,j}\right)\big{\|}_{\pi} \le 2C^{2}\ \|s_{n}\|\;\|ds_{n}(x)\|_{\pi}.$$
Similarly, we have 
$$\left\|\ad_{x} \left(v^{n,i}\otimes[u_{n,i}, u_{n,j}]\otimes v^{n,j}\right)\right\|_{\pi}\le 2C^{2}\ \|s_{n}\|\;\|ds_{n}(x)\|_{\pi}$$ 
and 
$$\left\|\ad_{x}\left(v^{n,i}\otimes v^{n,j}\otimes [u_{n,i}, u_{n,j}]\right)\right\|_{\pi}\le 2C^{2}\ \|s_{n}\|\;\|ds_{n}(x)\|_{\pi}.$$
As $(s_n\in \mathfrak{g}\otimes\mathfrak{g})$ is bounded and $\lim_{n\to +\infty}d s_n=0$ in the projective norm, by \eqref{AD3SN} and the definition of the projective norm, we get $\lim_{n\to +\infty}d\langle s_n,s_n\rangle=0$ in the projective norm.

(b) It is easy to see that $\lim_{n\to \infty}a_n=a$ and $\lim_{n\to\infty}s_n=s$. Using the Einstein notation, for any $n\ge 0,$ denote $a_n=x_{n,i}\otimes y^{n,i}$ and $s_n=u_{n,j}\otimes v^{n,j}$. By the skew-symmetry of $a_n$ and the symmetry of $s_n$, we have 
\begin{equation}\label{3<>}
\begin{aligned}
&\langle r_n,r_n\rangle\\
=&\langle a_n+s_n,a_n+s_n\rangle=\langle a_n,a_n\rangle+\langle s_n,s_n \rangle+[x_{n,i},u_{n,j}]\otimes y^{n,i}\otimes v^{n,j}+[u_{n,j},x_{n,i}]\otimes v^{n,j}\otimes y^{n,i}\\
&+x_{n,i}\otimes [y^{n,i},u_{n,j}]\otimes v^{n,j}+u_{n,j}\otimes [v^{n,j},x_{n,i}]\otimes y^{n,i}+x_{n,i}\otimes u_{n,j}\otimes [y^{n,i},v^{n,j}]+u_{n,j}\otimes x_{n,i}\otimes [v^{n,j},y^{n,i}]\\
=&\langle a_n,a_n\rangle+\langle s_n,s_n \rangle+\left([x_{n,i},u_{n,j}]\otimes y^{n,i}\otimes v^{n,j}+u_{n,j}\otimes y^{n,i}\otimes [x_{n,i},v^{n,j}]\right)-\left([x_{n,i}, u_{n,j}]\otimes v^{n,j}\otimes y^{n,i}\right.\\
&\left.+u_{n,j}\otimes [x_{n,i},v^{n,j}]\otimes y^{n,i}\right)+\left(x_{n,i}\otimes [y^{n,i},u_{n,j}]\otimes v^{n,j}+x_{n,i}\otimes u_{n,j}\otimes [y^{n,i},v^{n,j}]\right)\\
=&\langle a_n,a_n\rangle+\langle s_n,s_n \rangle+\left([x_{n,i},u_{n,j}]\otimes y^{n,i}\otimes v^{n,j}+u_{n,j}\otimes y^{n,i}\otimes [x_{n,i},v^{n,j}]\right)-\left([x_{n,i}, u_{n,j}]\otimes v^{n,j}\otimes y^{n,i}\right.\\
&\left.+u_{n,i}\otimes [x_{n,j},v^{n,i}]\otimes y^{n,j}\right)+\left(x_{n,i}\otimes [y^{n,i},u_{n,j}]\otimes v^{n,j}+x_{n,i}\otimes u_{n,j}\otimes [y^{n,i},v^{n,j}]\right).
\end{aligned}
\end{equation}
Then by the definition and the $\ad$-invariance of $s$, we have
$$
\begin{aligned}
&\big{\|}[x_{n,i},u_{n,j}]\otimes y^{n,i}\otimes v^{n,j}+u_{n,j}\otimes y^{n,i}\otimes [x_{n,i},v^{n,j}]\big{\|}_{\pi}\\
\le& \|y^{n,i}\|\big{\|} \ad_{x_{n,i}} u_{n,j}\otimes v^{n,j}  \big{\|}_{\pi}=\|y^{n,i}\|\big{\|} \ad_{x_{n,i}} s_n  \big{\|}_{\pi} \\
\le&  \|y^{n,i}\|\big{\|} \ad_{x_{n,i}} (s_n-s)  \big{\|}_{\pi}+ \|y^{n,i}\|\big{\|} \ad_{x_{n,i}} s  \big{\|}_{\pi}= \|y^{n,i}\|\big{\|} \ad_{x_{n,i}} (s_n-s)\|_{\pi}\\
\le& 2C\|x_{n,i}\|\ \|y^{n,i}\|\ \|s_n-s\|\le 2C\|s_n\|_{\pi} \ \|s_n-s\|_{\pi}.
\end{aligned}
$$
As $(s_n\in\mathfrak{g}\otimes \mathfrak{g})_{n=0}^{\infty}$ is a Cauchy sequence in the projective norm, it is bounded. Therefore we have 
$$\lim_{n\to +\infty} \big{\|}[x_{n,i},u_{n,j}]\otimes_{\pi} y^{n,i}\otimes_{\pi} v^{n,j}+u_{n,j}\otimes_{\pi} y^{n,i}\otimes_{\pi} [x_{n,i},v^{n,j}]\big{\|}_{\pi}=0.$$ It is similar to verify 
$$\lim_{n\to+\infty}\big{\|}[x_{n,j}, u_{n,i}]\otimes v^{n,i}\otimes y^{n,j}+u_{n,i}\otimes [x_{n,j},v^{n,i}]\otimes y^{n,j}\big{\|}_{\pi}=0$$ and 
$$\lim_{n\to+\infty}\big{\|}x_{n,i}\otimes [y^{n,i},u_{n,j}]\otimes v^{n,j}+x_{n,i}\otimes u_{n,j}\otimes [y^{n,i},v^{n,j}]\big{\|}_{\pi}=0.$$

 Finally as $\lim_{n\to +\infty}\langle r_n,r_n\rangle=0$, by (a), \eqref{STCY} and \eqref{3<>}, we have 
$\lim_{n\to+\infty}\ad_{x}[\![a_n,a_n]\!]=0$ in the projective norm for any $x\in \mathfrak{g}$. 
\end{proof}

By the above theorem, every projective r-matrix $r\in \mathfrak{g}\hat{\otimes}_{\pi}\mathfrak{g}$ with $\ad$-invariant symmetric part gives rise to a projective Lie bialgebra structure on $\mathfrak{g}.$ Furthermore, we have the following result.

\begin{cor}\label{QP}
Let $\mathfrak{g}$ be a Banach Lie algebra and $r=a+s\in \mathfrak{g}\hat{\otimes}_{\pi} \mathfrak{g}$ such that $a$ is the skew-symmetric and  $s$ is the symmetric part of $r$ respectively. If $r$ is a projective {\rm r}-matrix, then $(\mathfrak{g},dr)$ is a projective Banach Lie bialgebra.
\end{cor}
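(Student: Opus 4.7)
The plan is to verify each of the three defining conditions for $(\mathfrak{g}, dr)$ to be a projective Banach Lie bialgebra, with the bulk of the work concentrated in the Lie bracket axiom. First, boundedness of $dr$ is immediate: since $\mathfrak{g}$ is a Banach Lie algebra with continuity constant $C$, the adjoint operator $\ad_x$ extends to $\mathfrak{g}\hat{\otimes}_{\pi}\mathfrak{g}$ with operator norm at most $2C\|x\|$, so $\|dr(x)\|_{\pi}\le 2C\|x\|\,\|r\|_{\pi}$. Second, the 1-cocycle identity follows from Jacobi on $\mathfrak{g}$: one verifies $\ad_{[x,y]}=\ad_x\ad_y-\ad_y\ad_x$ on elementary tensors in $\mathfrak{g}\otimes\mathfrak{g}$ and extends by continuity to $\mathfrak{g}\hat{\otimes}_{\pi}\mathfrak{g}$, which gives $dr([x,y]) = \ad_x(dr(y)) - \ad_y(dr(x))$ directly.

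The main obstacle is verifying that $(dr)^{\ast}|_{\mathfrak{g}^{\ast}\otimes\mathfrak{g}^{\ast}}$ is a Lie bracket on $\mathfrak{g}^{\ast}$. I would split this into skew-symmetry and the Jacobi identity. Under the hypothesis (from the discussion preceding the corollary) that the symmetric part $s$ is $\ad$-invariant, one has $ds = 0$ and hence $dr = da$. Skew-symmetry of the associated dual bracket then reduces to checking that $\ad_x$ commutes with the flip $\hat{\tau}$ — a direct computation on elementary tensors $y\otimes z$ that extends by continuity — so that $\ad_x a$ remains skew-symmetric and therefore $(da)^{\ast}|_{\mathfrak{g}^{\ast}\otimes\mathfrak{g}^{\ast}}$ is skew-symmetric.

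For the Jacobi identity I would invoke Proposition \ref{JK} in tandem with Theorem \ref{T1}(b). By the definition of a projective r-matrix, pick a sequence $r_n\in\mathfrak{g}\otimes\mathfrak{g}$ with $r_n\to r$ and $\langle r_n,r_n\rangle\to 0$ in the projective norm. Decomposing $r_n = a_n + s_n$ into skew-symmetric and symmetric parts, continuity of $\hat{\tau}$ yields $a_n\to a$. Theorem \ref{T1}(b) then furnishes $\ad_x[\![a_n,a_n]\!]\to 0$ in the projective norm for every $x\in\mathfrak{g}$, and Proposition \ref{JK} converts this approximation statement into the Jacobi identity for $da^{\ast}$ on $\mathfrak{g}^{\ast}\otimes\mathfrak{g}^{\ast}$, completing the verification.

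The hard part is precisely this Jacobi step. The classical identity $\CYB(a) = -\tfrac{1}{2}[\![a,a]\!]$ holds cleanly in the purely skew-symmetric finite-dimensional setting, but here two complications overlap: the presence of a nontrivial symmetric part $s$ in $r$, and the need to transfer norm-level convergence from $\mathfrak{g}\hat{\otimes}_{\pi}\mathfrak{g}$ to statements about the dual bracket. Theorem \ref{T1} is engineered exactly to show that, under $\ad$-invariance of $s$, the cross-terms between $a_n$ and $s_n$ arising in $\langle r_n,r_n\rangle$ become negligible after applying $\ad_x$ and passing to the limit in the projective norm, so that $\ad_x[\![a_n,a_n]\!]\to 0$ can be extracted from $\langle r_n,r_n\rangle\to 0$. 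Once this is in hand, the Jacobi identity for the induced bracket on $\mathfrak{g}^{\ast}$ is a formal consequence via Proposition \ref{JK}, and the corollary follows.
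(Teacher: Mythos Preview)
Your proposal is correct and follows essentially the same route as the paper: the paper's proof also reduces skew-symmetry of $dr$ to $dr=da$ (via $\ad$-invariance of $s$), invokes Theorem~\ref{T1}(b) together with Proposition~\ref{JK} for the Jacobi identity, and appeals to Definition~\ref{1C} for the remaining structure. You have simply made explicit the boundedness and 1-cocycle verifications that the paper leaves implicit.
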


\begin{proof}
By the definition, we know that $dr$ is skew-symmetric. Then by Proposition \ref{JK} and  Theorem \ref{T1}, we find that $dr$ satisfies the Jacobi identity. Finally by Definition \ref{1C}, we obtain that $(\frkg,dr)$ is a projective Banach Lie bialgebra.
\end{proof}

\begin{defi}
Let $\mathfrak{g}$ be a Banach Lie algebra. An element $r\in \mathfrak{g}\hat{\otimes}_{\pi}\mathfrak{g}$ is called a \textbf{quasi-triangular projective r-matrix} if the symmetric part of $r$ is $\ad$-invariant and $r$ is a complete solution of the classical Yang-Baxter equation. Moreover, $r$ is called a \textbf{triangular projective r-matrix} if $r$ is skew-symmetric and $r$ is a projective solution of the classical Yang-Baxter equation.

If $r\in\mathfrak{g}\hat{\otimes}_{\pi} \mathfrak{g}$ is a quasi-triangular r-matrix, then the projective Banach Lie bialgebra $(\mathfrak{g},dr)$ is called \textbf{quasi-triangular}. If $r\in\mathfrak{g}\hat{\otimes}_{\pi} \mathfrak{g}$ is a triangular r-matrix, then the projective Lie bialgebra $(\mathfrak{g},dr)$ is called \textbf{triangular}.   
\end{defi}

Now we give some examples of quasi-triangular projective Lie bialgebras.
\begin{ex}
Let $\mathcal{H}$ be a separable Hilbert space and  $\mathcal{L}(\mathcal{H})$ be the Lie algebra of bounded operators from $\mathcal{H}$ to $\mathcal{H}.$ Then by Example \ref{E1}, 
the operator $\sum_{i=0}^{\infty}  a_{i} (E_{ii}\otimes E_{(i+1)(i+1) }-E_{(i+1)(i+1)}\otimes E_{ii})$ is a projective solution of the classical Yang-Baxter equation. By Corollary \ref{QP}, it induces a triangular projective Banach Lie bialgebra structure on $\mathcal{L}(\mathcal{H}).$
\end{ex}

\begin{ex}
Let $\mathcal{H}$ be a separable Hilbert space.
By Example \ref{E2}, if the sequence $(a_{ij}\in \mathbb{K })_{0\le i<j}^{\infty}$ satisfies
\begin{itemize}
\item[(a)] $\sum_{0\le i<j}^{\infty} |a_{ij}|<\infty$;
\item[(b)] $a_{ij}a_{jk}=a_{ik}a_{ij} + a_{jk}a_{ik},$
\end{itemize}
then the operator $\sum_{0\le i<j}^{\infty}a_{ij} (E_{ij}\otimes E_{ji}-E_{ji}\otimes E_{ji})$ is a projective triangular r-matrix on $\mathcal{L}(\mathcal{H}).$ By Corollary \ref{QP}, the operator induces a projective Banach Lie bialgebra structure on $\mathcal{L}(\mathcal{H}).$  
\end{ex}

\begin{ex}
Let $(\frkg_{i})_{i=1}^{\infty}$
be a family of Banach Lie algebras of finite dimension. Let $(\delta_{i}\in \frkg_{i}\otimes \frkg_{i})$ be a family of skew-symmetric solutions of the projective Yang-Baxter equation and $(a_{i})_{i=1}^{\infty}\in \ell^{1}$. Then by Example \ref{E3}, $\delta=\sum_{i} a_{i}\delta_{i}\in \frkg\otp\frkg$ is a projective triangular r-matrix. It induces a triangular projective Banach Lie bialgebra on $\frkg.$ 
\end{ex}

\subsection{Coboundary Banach Lie bialgebras}
Next, we consider the coboundary analogue of Banach Lie bialgebras, called the coboundary Banach Lie bialgebras.\label{33}
\begin{defi}
	Let $\mathfrak{g}_+$ be a Banach Lie bialgebra with respect to $\mathfrak{g}_-.$ Let $[\cdot,\cdot]_{\mathfrak{g}_-}$ be the Lie bracket of $\mathfrak{g}_-$. Then $\mathfrak{g}_+$ is called a \textbf{coboundary Banach Lie algebra} with respect to $\mathfrak{g}_-$ if $[\cdot,\cdot]_{\mathfrak{g}_-}:\mathfrak{g}_-\times\mathfrak{g}_-\to \mathfrak{g}_-$ restricts to a 1-coboundary $\theta:\mathfrak{g}_+\to \Lambda^{2}\mathfrak{g}_-^{\ast}$ with respect to adjoint action $\ad$ of $\mathfrak{g}_+,$ that is, there is a $S\in\Lambda^{2}\mathfrak{g}_-^{\ast}$ such that 
	$$\theta(x)=\ad_x S,\quad \forall x\in \mathfrak{g}_+.$$
\end{defi}

Next, we establish the relationship between quasi-triangular projective r-matrices and coboundary Banach Lie bialgebras.

\begin{pro}
Let $\frkg$ be a Banach Lie algebra and $r:\frkg\to \frkg\otp\frkg$ be a quasi-triangular {\rm r}-matrix. Then $(\frkg,\frkg^{\ast},p\circ dr)$ is a coboundary Banach Lie bialegbra, where $p$ is the operator given in \eqref{GA}.
\end{pro}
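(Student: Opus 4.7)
The plan is to combine Corollary \ref{QP} with a direct intertwining computation. First, by Corollary \ref{QP}, $(\frkg, dr)$ is a projective Banach Lie bialgebra, and the proposition preceding Definition \ref{LC} then upgrades this to a Banach Lie bialgebra structure of $\frkg$ with respect to $\frkg^{\ast}$ whose attached $1$-cocycle $\theta : \frkg \to \Lambda^{2}\frkg^{\ast\ast}$ satisfies
\begin{equation*}
\theta(x)(\alpha,\beta) \;=\; (\alpha\otp\beta)(dr(x)) \;=\; p(dr(x))(\alpha,\beta), \qquad \forall \alpha,\beta\in \frkg^{\ast}.
\end{equation*}
In other words, $\theta$ is exactly $p\circ dr$, so it remains only to exhibit an $S\in \Lambda^{2}\frkg^{\ast\ast}$ with $\theta(x) = \ad_{x} S$ for every $x\in \frkg$.

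Write $r = a + s$ with $a$ skew-symmetric and $s$ symmetric and $\ad$-invariant. The natural candidate is $S := p(a)$, which in fact lies in the subspace $\Lambda^{2}\frkg$: writing $a = \sum_{i} x_{i}\otimes y_{i}$ as a projectively convergent sum, one checks that $p(a)_{\alpha}(\beta) = \beta(\sum_{i}\alpha(x_{i})y_{i})$ defines an element of $\frkg \subseteq \frkg^{\ast\ast}$, and skew-symmetry of $p(a)$ as a bilinear form follows from $\hat{\tau}(a)=-a$. Because $\ad_{x} s = 0$, we have $dr(x) = \ad_{x} r = \ad_{x} a$, so the assertion $\theta(x) = \ad_{x} S$ reduces to the intertwining identity
\begin{equation*}
p(\ad_{x} b) \;=\; \ad_{x}\, p(b), \qquad \forall b\in \frkg\otp\frkg,\; \forall x\in \frkg,
\end{equation*}
applied to $b = a$.

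To establish the intertwining, I would first verify it on an elementary tensor $b = u\otimes v$: pairing each side with $\alpha\otimes\beta$ and using $\ad_{x}^{\ast}\gamma = \gamma\circ\ad_{x}$, both reduce to $\alpha([x,u])\beta(v)+\alpha(u)\beta([x,v])$. Linearity extends this to $\frkg\otimes\frkg$, and continuity of $\ad_{x}$ on $\frkg\otp\frkg$ together with boundedness of $p$ (via \eqref{CP} and \eqref{GA}) promotes it to the completion. Applying this to $b=a$ yields
\begin{equation*}
\theta(x) \;=\; p(dr(x)) \;=\; p(\ad_{x} a) \;=\; \ad_{x}\,p(a) \;=\; \ad_{x} S,
\end{equation*}
which is the coboundary condition.

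The main obstacle is mostly bookkeeping rather than analysis: one has to be attentive that $S = p(a)$ lies in the correct subspace, and that the two ``$\ad_{x}$'' actions, one on $\frkg\otp\frkg$ via derivations and one on bilinear forms on $\frkg^{\ast}$, match under $p$. No new analytic input is needed beyond boundedness of $p$ and $\ad_{x}$; the key identity is essentially algebraic on elementary tensors and is then promoted to $\frkg\otp\frkg$ by density.
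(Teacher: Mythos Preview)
Your argument is correct and follows essentially the same route as the paper's own proof, which is extremely terse: the paper simply notes that from the coboundary formula $dr(x)=\ad_x r$ in \eqref{DLT} together with the map $p$ in \eqref{GA} one immediately reads off that $p\circ dr$ is a $1$-coboundary. Your proof spells out exactly the content hidden behind that sentence, namely the intertwining identity $p(\ad_x b)=\ad_x\,p(b)$ verified on elementary tensors and extended by density, and you are more careful than the paper in taking $S=p(a)$ rather than $p(r)$ so that $S$ actually lands in $\Lambda^2\frkg^{\ast\ast}$; this is the right choice since the symmetric part $s$ contributes nothing to $\ad_x r$ anyway.
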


\begin{proof}
It follows from \eqref{DLT} and \eqref{GA} that $p\circ dr$ is a 1-coboundary. This implies $(\frkg,\frkg^{\ast},p\circ dr)$ is a cobuoundary Banach Lie bialgebra.
\end{proof}

\section{Projective Banach Poisson-Lie groups}\label{4}

 In this section, we first introduce the notion of projective Poisson bivectors on Banach manifolds, which leads to the definition of projective Banach Poisson manifolds. Then as the projective corresponding of Banach Poisson-Lie groups, we proposed the notion of projective Banach Poisson Lie groups. As in the case of Banach Poisson Lie groups introduced in \cite{TP}, we prove that the differential of a projective Banach Poisson-Lie group gives rise to a projective Banach Lie bialgebra structure. 

In this section, we shall work in the framework of Banach Poisson manifolds and Banach Lie groups. We do not recall their definitions here. Instead, we refer the readers to \cite{DO} for more details. Unless otherwise stated, we assume all Banach manifolds are smooth.

Let $M$ be a Banach manifold. We will denote by $\barwedge^{2} TM$ the vector bundle over $M$ whose fiber over $a$ lies in the Banach space of skew-symmetric elements of $T_{a} M\otp T_{a} M$. And denote by $\wedge^{2} TM$ the vector bundle over $M$ whose fiber over $a$ lies in the Banach space of skew-symmetric continuous bilinear forms on $M.$

\subsection{Projective Banach Poisson manifolds}

First, considering the Poisson bivectors in the projective tensor setting, we propose the notion of projective Poisson bivectors.

\begin{defi}
Let $M$ be a Banach manifold and $TM,T^{\ast}M$ be the tangent and the cotangent bundles of $M$ respectively. A smooth section $P$ of $\barwedge^{2} TM$ is called a projective Poisson bivector on $M$ if for any closed local sections $\alpha,\beta,\gamma\in T^{\ast}M$,
\begin{equation}\label{PJ}
		\big{(}\alpha \otp d((\beta\otp \gamma\big{)}(P) ))(P)+\big{(}\beta \otp d((\gamma\otp \alpha\big{)}(P) ))(P)+\big{(}\gamma \otp d((\alpha\otp \beta\big{)}(P) ))(P)=0,
	\end{equation}
where $\beta\otp \gamma$ lies in $\wedge^{2} TM$ such that $(\beta\otp \gamma)_{a}(P_{a})=(\beta_{a}\otp\gamma_{a})(P_{a})$ with $\beta_{a}\otp\gamma_{a}$ the unique extension of $\beta\otp\gamma$, and $d((\beta\otp \gamma)(P) ):TM\to \mathbb{K}$ is the differential of $(\beta\otp \gamma)(P).$
\end{defi}

\begin{rmk}
\begin{itemize}
\item[(a)] For a cotangent bundle $T^{\ast}M$, its fibre over a point $a\in M$ is the topological dual space of the tangent space. Different from the case of finite dimensional manifolds, not every element of the cotangent space $T^{\ast}_{a} M$ can be induced by a smooth map $f: M \to \mathbb{R}$ (through its differential $df_{a}: T_{a} M\to \mathbb{R}$). This means the differential of smooth functions only generates a proper subspace of  
$T_{a}^{\ast}M$ in general. Hence the identity \eqref{PJ} implies that the operator $\{\cdot,\cdot\}:\mathcal{C}^{\infty} (M)\times \mathcal{C}^{\infty} (M) \to \mathcal{C}^{\infty} (M)$ given by 
$\{f,g\}=(df\otp dg)(P)$ for any $f,g\in \mathcal{C}^{\infty} (M)$ 
satisfies the Jacobi identity, but the converse is not necessarily true.
\item[(b)]  By \cite[Remark 3.2]{TP}, a covector $\alpha$ in $T^{\ast}_{a} M$ can not be extended to a smooth $1$-form on $M$ in general. While it is always possible to extend $\alpha$ to be a locally defined $1$-form. Thus, as in the case of Banach Poisson manifolds, in the above definition, we used local sections of the cotangent bundle rather than global sections.
\end{itemize}
\end{rmk}

\begin{defi}
A pair $(M, P)$ is called a projective Banach Poisson manifold if $M$ is a Banach manifold and $P$ is a Poisson bivector on $M$.
\end{defi}

Let $(M,P)$ be a projective Banach Poisson manifold. For any $a\in M,$ it follows from \eqref{GA} that $p(P_{a})\in \mathcal{L}(T_{a}M,T_{a}M;\mathbb{K})$ and $p$ is a continuous map from $T_{a}M\otp T_{a}M$ to $\mathcal{L}(T_{a}M,T_{a}M;\mathbb{K})$. Hence $p$ induces a vector bundle morphism from $\barwedge^{2} TM$ to $\wedge^{2} TM$. By abuse of notation, we write the image of $P$ in the induced map as $p\circ P.$

The notions of generalized Banach Poisson manifolds and Banach Poisson-Lie groups were proposed in \cite{TP}. We do not recall them in this note but refer the readers to \cite{TP} for more details. By the definition, it is not difficult to derive the following proposition.

\begin{pro}\label{PGM}
Let $(M, P)$ be a projective Banach Poisson manifold. Then $(M, T^{\ast} M, p\circ P)$ is a generalized Banach Poisson manifold.
\end{pro}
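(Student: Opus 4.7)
The plan is to verify that $(M, T^{\ast} M, p\circ P)$ satisfies the three defining conditions of a generalized Banach Poisson manifold in the sense of \cite{TP}: smoothness of the bivector, skew-symmetry, and a Jacobi-type identity for the induced bracket. Since the only non-formal content is translating the projective-tensor Jacobi identity \eqref{PJ} into an identity involving bilinear forms, the argument is essentially a naturality statement for the map $p$ introduced in \eqref{GA}, applied fiberwise.

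First I would show that $p\circ P$ is a smooth skew-symmetric section of $\wedge^{2}TM$. The construction of $p$ in Section \ref{21} is canonical and linear, so the fiberwise operators $p_{a}:T_{a}M\hat{\otimes}_{\pi}T_{a}M\to\mathcal{L}(T^{\ast}_{a}M,T^{\ast}_{a}M;\mathbb{K})$ assemble into a continuous vector bundle morphism $p:\barwedge^{2}TM\to\wedge^{2}TM$; composing with the smooth section $P$ gives a smooth section $p\circ P$. Skew-symmetry is preserved by $p$ because it commutes with the swap operator, so $p\circ P$ is indeed a section of $\wedge^{2}TM$.

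The main step is the Jacobi identity. For any local closed sections $\alpha,\beta,\gamma$ of $T^{\ast}M$, the defining property of $p$ gives the pointwise identity
\begin{equation*}
(p\circ P)(\alpha,\beta)=(\alpha\hat{\otimes}_{\pi}\beta)(P),
\end{equation*}
so in particular the smooth functions $(p\circ P)(\beta,\gamma)$ and $(\beta\hat{\otimes}_{\pi}\gamma)(P)$ coincide on $M$. Substituting this into the inner argument, we obtain
\begin{equation*}
(p\circ P)\bigl(\alpha,d((p\circ P)(\beta,\gamma))\bigr)=\bigl(\alpha\hat{\otimes}_{\pi}d((\beta\hat{\otimes}_{\pi}\gamma)(P))\bigr)(P),
\end{equation*}
and summing over cyclic permutations of $(\alpha,\beta,\gamma)$ reduces the desired Jacobi identity for $p\circ P$ to the hypothesis \eqref{PJ} on $P$. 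This identifies $(M,T^{\ast}M,p\circ P)$ as a generalized Banach Poisson manifold.

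The anticipated obstacle is not analytical but purely a matter of bookkeeping: one must align our convention for $\wedge^{2}TM$ (skew-symmetric continuous bilinear forms on $T^{\ast}_{a}M$) with the data $(M,F,\pi)$ used to define a generalized Banach Poisson manifold in \cite{TP}, and verify that taking $F=T^{\ast}M$ as the duality subbundle is legitimate, i.e.\ that the continuity and closedness assumptions on local sections used in \eqref{PJ} match those required for the Jacobi condition in \cite{TP}. Once this dictionary is set up, the verification above is immediate.
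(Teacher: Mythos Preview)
Your proposal is correct and is in fact a fully spelled-out version of what the paper does: the paper simply states that the result follows ``by the definition'' without further argument, and your unpacking of the definition of $p$ to translate \eqref{PJ} into the Jacobi identity for $p\circ P$ is exactly the verification that is being left implicit.
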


\begin{rmk}
Different from the case of generalized Banach Poisson manifolds, the notion of projective Banach Poisson manifolds coincides with Banach Poisson manifolds introduced in \cite{O}. In fact, let $(M,P)$ be a projective Banach manifold, define $\{\cdot,\cdot\}:\mathcal{C}^{\infty} (M)\times \mathcal{C}^{\infty} (M) \to \mathcal{C}^{\infty} (M)$  by 
$\{f,g\}=(df\otp dg)(P)$ for any $f,g\in \mathcal{C}^{\infty} (M),$ then by \eqref{PJ}, one can check that $X_f=\{f,\cdot\}$ is a smooth vector field on $M$, it is call the \textbf{Hamiltion vector field}.
\end{rmk}

\subsection{Projective Banach Poisson-Lie groups}\label{42}

To define projective Banach Poisson–Lie groups, it is essential to figure out how a projective Poisson structure is constructed on the product of two Poisson manifolds. The following proposition is straightforward.

\begin{pro}\label{PM1}
Let $(M_{1},P')$ and $(M_{2},P'')$ be two projective Banach Poisson manifolds. Then $(M_{1}\times M_{2},P)$ is a projective Banach Poisson manifold, where 
\begin{itemize}
 \item[{\rm(a)}] $M_{1}\times M_{2}$ is the product Banach manifold of $M_{1}$ and $M_{2}.$ Moreover, the tangent bundle of $M_{1}\times M_{2}$ is isomorphic to the direct sum of the vector bundles $TM_{1}$ and $TM_{2}$ and the cotangent bundle of $M_{1}\times M_{2}$ is isomorphic to the direct sum of the vector bundles  $T^{\ast}M_{1}$ and $T^{\ast} M_{2};$
\item[{\rm(b)}] $P$ is given on $M_1\times M_2$ by 
 		\begin{equation*}
 	P_{(a,b)}=P'_{a}+P''_{b},\quad \forall a\in M_{1},\ \forall b\in M_{2}.
 		\end{equation*}
	\end{itemize} 
\end{pro}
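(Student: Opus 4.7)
The plan is to exploit the canonical direct-sum decompositions $T(M_1 \times M_2) \cong \pi_1^{*} TM_1 \oplus \pi_2^{*} TM_2$ and $T^{*}(M_1 \times M_2) \cong \pi_1^{*} T^{*} M_1 \oplus \pi_2^{*} T^{*} M_2$ that come with the product structure, together with the orthogonality that $\pi_j^{*} T^{*} M_j$ annihilates $\pi_i^{*} TM_i$ whenever $i \ne j$ under the canonical fibrewise pairing. This decouples the two factors both in the construction of $P$ and in the verification of the Jacobi identity.

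First, I would verify that $P$ is a well-defined smooth section of $\barwedge^{2} T(M_1\times M_2)$. The tangent decomposition induces a splitting of $T_{(a,b)}(M_1\times M_2) \otp T_{(a,b)}(M_1\times M_2)$ into four projective tensor summands, and the bivectors $P'_a$ and $P''_b$ sit inside the two "diagonal" summands $T_aM_1 \otp T_aM_1$ and $T_bM_2 \otp T_bM_2$ respectively. Skew-symmetry is preserved summand-wise, and smoothness of $P$ follows from smoothness of $P'$ and $P''$ together with continuity of the natural inclusions.

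For the Jacobi identity \eqref{PJ}, I would fix a point $(a_0,b_0)$ and take closed local $1$-forms $\alpha,\beta,\gamma$ on a neighborhood of that point. Decomposing each along the cotangent splitting as $\alpha = \alpha_1 + \alpha_2$, and similarly for $\beta, \gamma$, the orthogonality kills every mixed contraction, yielding the key reduction
\[
(\beta\otp\gamma)(P) = (\beta_1 \otp \gamma_1)(P') + (\beta_2 \otp \gamma_2)(P'').
\]
Differentiating this scalar function and contracting once more with $P = P' + P''$ produces a sum of "pure" terms that feed directly into the factor-wise Jacobi identities, together with cross terms coming from the fact that $\beta_1,\gamma_1$ may still depend on the $M_2$-coordinate (and vice versa).

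The main obstacle is showing that these cross terms cancel in the cyclic sum. My approach is to invoke the Banach Poincar\'e lemma on a chart to reduce to the exact case $\alpha = df$, $\beta = dg$, $\gamma = dh$; this reduction is legitimate because \eqref{PJ} depends only on the germs of $\alpha,\beta,\gamma$, and every germ of a closed $1$-form is exact. Under this reduction \eqref{PJ} becomes the Jacobi identity for the bracket on scalar functions, and a direct expansion using the slicewise factorization $\{f,g\}(a,b) = \{f(\cdot, b), g(\cdot, b)\}'(a) + \{f(a, \cdot), g(a, \cdot)\}''(b)$ shows that the cross terms appearing in the triple bracket $\{f, \{g,h\}\}$ assemble, via equality of mixed partials $d_1 d_2 = d_2 d_1$, into a cyclic expression that cancels identically. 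What is left is precisely the cyclic sum of the Jacobi identities for $P'$ applied slicewise on $M_1$ and for $P''$ applied slicewise on $M_2$, both of which vanish by the projective-Poisson hypothesis on the factors.
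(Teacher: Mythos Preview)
Your argument is correct. The paper does not actually supply a proof of this proposition: it simply declares the result ``straightforward'' and moves on. What you have written fills in precisely the details the paper omits, and your strategy---using the orthogonality of the cotangent splitting to decouple the contraction $(\beta\otp\gamma)(P)$ into factor-wise pieces, then invoking the local Poincar\'e lemma to reduce \eqref{PJ} to the Jacobi identity for the bracket on smooth functions---is the standard route and works without change in the Banach setting. The one subtlety you correctly flagged, that the components $\alpha_1,\alpha_2$ of a closed $1$-form need not themselves be closed along the individual factors, is exactly what the reduction to exact forms circumvents; once $\alpha=df$, $\beta=dg$, $\gamma=dh$, the slicewise factorization $\{f,g\}(a,b)=\{f(\cdot,b),g(\cdot,b)\}'(a)+\{f(a,\cdot),g(a,\cdot)\}''(b)$ and Schwarz' theorem on mixed partials handle the cross terms as you describe.
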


\begin{defi}
Let $(M_{1},P_{1})$ and $(M_{2},P_{2})$ be two projective Banach Poisson manifolds and $F:M_{1}\to M_{2}$ be a smooth map. The map $F$ is called a \textbf{projective Poisson map} at $a\in M_{1}$ if 
\begin{equation}\label{PM}
		( T_{a}F\otp  T_{a}F)(P_{a})=P_{F(a)}.
\end{equation}
Moreover, $F$ is called a projective Poisson map if it is a projective Poisson map at any $a\in M_{1}.$
\end{defi}

Next, we introduce the notion of projective Banach Poisson-Lie groups, which coincides with Banach Poisson-Lie groups introduced in \cite{TP}.

\begin{defi}
Let $G$ be a Banach Lie group equipped with a projective Poisson Banach manifold structure $(G,P)$. Then the pair $(G,P)$ is called a \textbf{projective Banach Poisson-Lie group} if the group multiplication map $m: G\times G\to G$ is a projective Poisson map. 
\end{defi}

\begin{ex}
Let $((G_{i},P_{i}))_{i=1}^{\infty}$ be a family of Banach Poisson-Lie groups such that for any $i,$ $G_{i}$ is of finite dimension and $(a_i)_{i=1}^{\infty}\in \ell^{1}$. Let $G=\bigoplus_i G_{i}$. Then $P=\sum_{i} a_{i}P_{i}$ lies in $TG\otp TG$ and $(G,P)$ forms a projective Banach Poisson-Lie group. 
\end{ex}

Denote by $L_{a}$ and $R_{a}$ the left and right translations by $a\in G$ respectively. We extend the notation $L_{a}$ and $R_{a}$ to represent their naturally induced maps on the tangent space $TG$, though technically these are different operations. By abuse of notation, we denote by $L_{a}$ the action of $a\in G$ on a section $P$ of $\barwedge^{2}TM$:
$L_{a}P_{a}=(TL_{a}\otp TL_{a})P,$ where $TL_{a}\otp TL_{a}$ is the unique extension of $TL_{a}\otimes TL_{a}$.
Similarly, we denote by $R_{a}$ the action of $a\in G$ on $P_{a}$:
$R_{a}P_{a}=(TR_{a}\otp TR_{a})P.$

Denote the adjoint action of $G$ on $G$ by $\Ad_{a}=L_{a}\circ R_{a}^{-1}.$ By abuse of notation, we also denote the induced maps of $\Ad_{a}$ on $TG$ and $\barwedge^{2} TG$ by $\Ad_{a}.$

Then in the next proposition, we give an equivalent characterization of the projective Banach Poisson-Lie groups.

\begin{pro}\label{P1}
Let $G$ be a Banach Lie group with an operator $P: G\times G\to G$ such that $(G, P)$ is a projective Banach Poisson manifold. Then $G$ is a projective Banach Lie group if and only if the Poisson tensor  $P$ satisfies
\begin{equation*}
		P_{ab}= L_{a}(P_{b})+ R_{b}(P_{a}),\quad \forall a,b\in G.
\end{equation*} 
\end{pro}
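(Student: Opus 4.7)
The plan is to unravel the definition of a projective Banach Poisson--Lie group as a pointwise statement and then match it with the multiplicativity identity. By Proposition~\ref{PM1}, the product projective Poisson structure on $G\times G$ at a point $(a,b)$ is given by
\[
P^{G\times G}_{(a,b)} = P_a + P_b,
\]
where $P_a\in T_aG\otp T_aG$ and $P_b\in T_bG\otp T_bG$ are viewed inside $(T_aG\oplus T_bG)\otp (T_aG\oplus T_bG) \simeq T_{(a,b)}(G\times G)\otp T_{(a,b)}(G\times G)$ via the natural inclusion. Using \eqref{PM}, $(G,P)$ being a projective Banach Poisson--Lie group is equivalent to saying that for every $(a,b)\in G\times G$,
\[
(T_{(a,b)} m \otp T_{(a,b)} m)(P_a+P_b) = P_{ab}.
\]
So the whole task is to rewrite the left-hand side in terms of $L_a$ and $R_b$.

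Next I would compute the differential of multiplication. For $(X,Y)\in T_aG\oplus T_bG$, the standard identity for Lie groups gives
\[
T_{(a,b)}m(X,Y) = T_aR_b(X) + T_bL_a(Y).
\]
Restricting to the subspace $T_aG\oplus\{0\}$, $T_{(a,b)}m$ agrees with $T_aR_b$, and restricting to $\{0\}\oplus T_bG$ it agrees with $T_bL_a$. Therefore on the algebraic tensor product $T_aG\otimes T_aG$ we have
\[
(T_{(a,b)}m\otimes T_{(a,b)}m)(X_1\otimes X_2) = T_aR_b(X_1)\otimes T_aR_b(X_2),
\]
which, after extending continuously to the projective tensor completion, yields
$(T_{(a,b)}m\otp T_{(a,b)}m)(P_a) = R_b(P_a)$.
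By the same reasoning applied to the $b$-slot, $(T_{(a,b)}m\otp T_{(a,b)}m)(P_b)=L_a(P_b)$. Finally, the cross contributions vanish because $P_a$ (respectively $P_b$) has no component in the mixed summands of $(T_aG\oplus T_bG)\otp(T_aG\oplus T_bG)$.

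Combining these pieces, the condition $(T_{(a,b)}m\otp T_{(a,b)}m)(P_a+P_b)=P_{ab}$ is equivalent to
\[
R_b(P_a) + L_a(P_b) = P_{ab},
\]
which is exactly the claimed multiplicativity identity. Both implications then follow at once, since the equivalence holds pointwise for every $(a,b)\in G\times G$. The main technical point to verify carefully, rather than a deep obstacle, is that the continuous extensions to the projective tensor products are compatible with the decomposition $T_{(a,b)}(G\times G) = T_aG\oplus T_bG$; this is where one uses the uniqueness of the extension in Proposition~\ref{UPTP} together with the fact that the inclusions $T_aG\hookrightarrow T_{(a,b)}(G\times G)$ and $T_bG\hookrightarrow T_{(a,b)}(G\times G)$ are isometric on the relevant summands, so that the projective tensor norm behaves as expected under restriction to each factor.
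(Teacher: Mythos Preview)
Your proposal is correct and follows essentially the same route as the paper: invoke Proposition~\ref{PM1} to write $P_{(a,b)}=P_a+P_b$, then apply \eqref{PM} and compute $(T_{(a,b)}m\otp T_{(a,b)}m)$ on each summand using $T_{(a,b)}m(X,Y)=T_aR_b(X)+T_bL_a(Y)$. The paper's proof records the same two steps but leaves the splitting of $T_{(a,b)}m$ and the projective-tensor bookkeeping implicit; your version simply spells these out.
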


\begin{proof}
By \eqref{PM}, we have
$$P_{ab}=(T_{(a,b)}m\otp T_{(a,b)}m)(P_{(a,b)})$$
	for any $a,b\in G$, where $T_{(a,b)}m\otp T_{(a,b)}m$ is the unique extension of $T_{(a,b)}m\otimes T_{(a,b)}m.$ And it follows from Proposition \ref{PM1} that 
	$$   
	(T_{(a,b)}m\otp T_{(a,b)}m)(P_{(a,b)})=L_{a}(P_{b})+R_{b}(P_{a}).
	$$
Hence $P$ is a projective Poisson bivector if and only if $P_{ab}= L_{a}(P_{b})+R_{b}(P_{a})$. 
\end{proof}

The coherence between the group operation and projective Poisson bivectors can be established by analyzing their induced structures on the Lie algebras. To see this, we give the following equivalent characterization of the projective Banach Poisson-Lie groups.

\begin{pro}\label{DE}
Let $(G, P)$ be a Banach Poisson Lie group equipped with a structure of projective Banach Poisson manifold $(G, P)$. Then $G$ is a projective Banach Poisson-Lie group if and only if the map $\Delta:G\to \barwedge^{2} TG$ given by $\Delta(a)=R_{a^{-1}}P_{a}$ is a 1-cocycle on $G$ with respect to the adjoint action of $G$ on $\barwedge^{2} TG$, that is, for any $a,b\in G,$
	\begin{equation*}
		\Delta(ab)=\Ad (a)\Delta(b)+\Delta(a).
\end{equation*}	
\end{pro}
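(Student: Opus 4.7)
The plan is to deduce Proposition~\ref{DE} directly from the equivalent characterization already obtained in Proposition~\ref{P1}, namely that $(G,P)$ is a projective Banach Poisson--Lie group if and only if $P_{ab} = L_{a}(P_{b}) + R_{b}(P_{a})$ for all $a,b\in G$. The bridge between the two formulations is the simple substitution $P_{a} = R_{a}\Delta(a)$, which is immediate from the defining relation $\Delta(a) = R_{a^{-1}}P_{a}$ together with the fact that $R_{a}$ and $R_{a^{-1}}$ are mutually inverse on $\barwedge^{2}TG$ (viewing $R_{a}$ with the abuse of notation introduced before Proposition~\ref{P1}).

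The computation I would carry out is as follows. Assuming $P_{ab} = L_{a}(P_{b}) + R_{b}(P_{a})$ and inserting $P_{a} = R_{a}\Delta(a)$ on both sides, one obtains
\begin{equation*}
R_{ab}\Delta(ab) \;=\; L_{a}R_{b}\Delta(b) + R_{b}R_{a}\Delta(a).
\end{equation*}
Next I would invoke the standard group-theoretic identities on $TG$, namely $R_{ab} = R_{b}\circ R_{a}$ (coming from $R_{a}(x)=xa$), the commutativity $L_{a}\circ R_{b} = R_{b}\circ L_{a}$, and $\Ad_{a} = L_{a}\circ R_{a}^{-1} = R_{a}^{-1}\circ L_{a}$, and apply $R_{b}^{-1}$ followed by $R_{a}^{-1}$ to both sides to arrive at
\begin{equation*}
\Delta(ab) \;=\; R_{a}^{-1}L_{a}\Delta(b) + \Delta(a) \;=\; \Ad(a)\Delta(b) + \Delta(a).
\end{equation*}
The reverse implication is obtained by running the same computation backwards: starting from the $1$-cocycle relation for $\Delta$, apply $R_{a}$ and $R_{b}$, use the same identities, and recover the multiplicativity relation from Proposition~\ref{P1}.

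The main technical point, and the only real obstacle, is that these manipulations take place on the projective tensor bundle $\barwedge^{2}TG$ rather than on $TG$ itself, so I need to check that the identities $R_{ab}=R_{b}\circ R_{a}$, $L_{a}\circ R_{b}=R_{b}\circ L_{a}$ and $\Ad_{a}=L_{a}\circ R_{a}^{-1}$ are still valid after applying the abuse-of-notation extensions $TL_{a}\otp TL_{a}$, $TR_{a}\otp TR_{a}$, etc. This follows from the uniqueness of the continuous extension of a bounded bilinear map to the projective tensor product (Proposition~\ref{UPTP}): any polynomial relation which holds fibrewise on $T_{a}G\otimes T_{a}G$ between continuous operators automatically passes to the completion $T_{a}G\otp T_{a}G$, and then descends to the skew-symmetric subspace $\barwedge^{2}T_{a}G$ since all operators in sight respect the symmetry $\hat{\tau}$. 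Once this functoriality is noted, the rest of the argument is a short algebraic manipulation.
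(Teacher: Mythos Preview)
Your proposal is correct and follows essentially the same route as the paper: both arguments reduce to Proposition~\ref{P1} and then perform the algebraic manipulation of applying $R_{(ab)^{-1}}=R_{a^{-1}}\circ R_{b^{-1}}$ (equivalently $R_{ab}^{-1}=R_a^{-1}\circ R_b^{-1}$) to the multiplicativity relation $P_{ab}=L_a(P_b)+R_b(P_a)$, using the commutation $L_a\circ R_b=R_b\circ L_a$ to recognise $\Ad(a)=R_{a^{-1}}L_a$. Your write-up is in fact slightly more complete than the paper's, since you explicitly treat the converse direction and justify why the group identities survive passage to the projective tensor bundle $\barwedge^{2}TG$ via Proposition~\ref{UPTP}, points the paper leaves implicit.
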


\begin{proof}
By the definition, we have $R_{(ab)^{-1}}=R_{a^{-1}}\circ R_{b^{-1}}$. Assume that $G$ is a projective Poisson-Lie group, then by Proposition \ref{P1} we see that 
$$\begin{aligned}
R_{(ab)^{-1}}P_{ab}&=R_{a^{-1}}R_{b^{-1}}L_{a}P_{b}+R_{a^{-1}}R_{b^{-1}}R_{b}P_{a}\\
&=R_{a^{-1}}R_{b^{-1}}L_{a}P_{b}+R_{a^{-1}}P_{a}\\
&=R_{a^{-1}}L_{a}R_{b^{-1}}P_{b}+R_{a^{-1}}P_{a}.
\end{aligned}$$ 
Therefore we have $\Delta(ab)=\Ad(a)\Delta(b)+\Delta(a).$
\end{proof}

By the above proposition, one can readily verify the following corollary.   

\begin{cor}\label{DEE}
Let $(G,P)$ be a Banach Poisson Lie group and $\Delta:G\to \barwedge^{2} TG$ be the operator given by 
$\Delta(a)=R_{a^{-1}}P_{a}$ for any $a\in G$. Then $\Delta(e)=0,$ where $e$ is the unit element of $G.$
\end{cor}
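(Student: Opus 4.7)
The plan is to derive the vanishing of $\Delta$ at the identity directly from the $1$-cocycle property that was just established in Proposition \ref{DE}. Since the hypothesis says $(G,P)$ is a Banach Poisson-Lie group, that proposition applies and gives
\begin{equation*}
\Delta(ab) = \Ad(a)\Delta(b) + \Delta(a), \qquad \forall a,b \in G.
\end{equation*}

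The key step is to specialize this identity to $a=b=e$. Because $e$ is the unit of $G$, we have $ee=e$, and the adjoint action at the identity is trivial, i.e.\ $\Ad(e) = \id$ on $TG$ and therefore $\Ad(e) = \id$ on $\barwedge^{2}TG$ (the induced action is just the double tensor of the tangent map of $\mathrm{Ad}_e$, which is the identity, and this identification extends uniquely to the projective tensor product by Proposition \ref{UPTP}). Substituting, the cocycle relation becomes
\begin{equation*}
\Delta(e) = \Ad(e)\Delta(e) + \Delta(e) = \Delta(e) + \Delta(e) = 2\Delta(e),
\end{equation*}
from which $\Delta(e) = 0$ follows by subtracting $\Delta(e)$ from both sides (the fiber $\barwedge^{2}T_{e}G$ is a Banach space, so this algebraic manipulation is legitimate).

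There is really no obstacle here; the statement is a one-line consequence of a pointwise evaluation of the cocycle identity, exactly mirroring the standard fact that any Lie-group $1$-cocycle vanishes at the identity. The only tiny verification worth mentioning explicitly is that $\Ad(e)$ acts as the identity on $\barwedge^{2}T_{e}G$, which follows since $\mathrm{Ad}_e = L_e \circ R_{e}^{-1} = \id_G$ and its tangent at $e$ is $\id_{T_{e}G}$, whose natural extension $\id \otp \id$ to $T_{e}G \otp T_{e}G$ restricts to the identity on the skew-symmetric subspace.
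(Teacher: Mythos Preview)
Your proof is correct and follows exactly the approach indicated by the paper, which simply states that the result follows from Proposition~\ref{DE}. Specializing the cocycle identity at $a=b=e$ and using $\Ad(e)=\id$ is precisely the intended verification.
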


In the next proposition, we prove that every projective Banach Poisson-Lie group gives rise to the Banach Poisson Lie group structure.

\begin{pro}
Let $(G,P)$ be a projective Banach Poisson-Lie group. Then $(G, T^{\ast} G, p\circ P)$ is a Banach Poisson Lie group.
\end{pro}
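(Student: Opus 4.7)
The plan is to reduce the claim to Proposition \ref{PGM} combined with a naturality argument for the map $p$ from \eqref{GA}. By Proposition \ref{PGM}, the triple $(G,T^{\ast}G,p\circ P)$ is already a generalized Banach Poisson manifold, so the only remaining content is that the multiplication $m\colon G\times G\to G$ is a Poisson map in the generalized sense of \cite{TP} with respect to $p\circ P$.

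The key technical lemma I would establish first is the naturality of $p$ under pushforward: for any bounded linear operator $T\colon V\to W$ between Banach spaces and any $u\in V\otp V$, one has
\[
p_{W}\bigl((T\otp T)(u)\bigr)(\alpha,\beta)=p_{V}(u)(T^{\ast}\alpha,T^{\ast}\beta),\qquad \forall\,\alpha,\beta\in W^{\ast}.
\]
On a simple tensor $v_1\otimes v_2$ both sides evaluate to $\alpha(Tv_1)\beta(Tv_2)$, and the general case follows from Proposition \ref{UPTP} together with the continuity of both $T\otp T$ and $p$ on the projective tensor product.

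Since $(G,P)$ is a projective Banach Poisson-Lie group, the projective Poisson map condition for $m$ at an arbitrary point $(a,b)$ reads
\[
\bigl(T_{(a,b)}m\otp T_{(a,b)}m\bigr)(P_{(a,b)})=P_{ab}.
\]
Applying the naturality lemma with $T=T_{(a,b)}m$ yields, for every $\alpha,\beta\in T^{\ast}_{ab}G$,
\[
p(P_{ab})(\alpha,\beta)=p(P_{(a,b)})\bigl(T_{(a,b)}m^{\ast}\alpha,\,T_{(a,b)}m^{\ast}\beta\bigr),
\]
which is exactly the generalized Poisson-map condition for $m$ relative to $(p\circ P)$. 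Combining this with Proposition \ref{PGM} gives the conclusion that $(G,T^{\ast}G,p\circ P)$ is a Banach Poisson-Lie group in the sense of \cite{TP}.

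The main (and still rather mild) obstacle is handling the product structure: one must identify $P_{(a,b)}$ on $G\times G$ with the decomposition $L_{a}P_{b}+R_{b}P_{a}$ supplied by Propositions \ref{PM1} and \ref{P1}, and then check that this decomposition is compatible with $p$ under the canonical splitting $T^{\ast}_{(a,b)}(G\times G)\cong T^{\ast}_{a}G\oplus T^{\ast}_{b}G$. This reduces to the linearity of $p$ together with the definition of the product projective Poisson bivector, so no new analytic input is required; the only care needed is that the subbundle $T^{\ast}(G\times G)$ appearing in the generalized Banach Poisson structure is recovered, which is immediate from the direct sum identification above.
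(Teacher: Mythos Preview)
Your proposal is correct and follows essentially the same route as the paper: the paper's proof is a one-line citation of Proposition \ref{PGM}, Proposition \ref{P1}, and \cite[Proposition 5.7]{TP}, which amounts to exactly your argument with the naturality lemma for $p$ left implicit. Your explicit verification that $p$ intertwines the pushforward $T\otp T$ with the pullback of bilinear forms is precisely the content hidden in the passage from Proposition \ref{P1} to the multiplicativity condition of \cite[Proposition 5.7]{TP}, so there is no genuine difference in strategy.
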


\begin{proof}
The result follows from Proposition \ref{PGM}, Proposition \ref{P1} and \cite[Proposition 5.7]{TP}.
\end{proof}

\begin{lem}\label{DEL}
Let $G$ be a Banach Lie group equipped with the structure of a projective Banach Poisson manifold $(G,P)$. Let $\Delta:G\to \barwedge^{2} TM$ be the operator given by 
$\Delta(a)=R_{a^{-1}}P_{a}$ and $P$ be a smooth local section of $\barwedge ^{2} TM$. Then for any $\alpha,\beta\in \barwedge T^{\ast} M$ around $a\in G$ and $X_{a}\in T_{a} M$, we have  
$$d((\alpha\otp \beta)(P))(X_{a})=(\alpha_{0}(a)\otp \beta_{0}(a))(T_{a}\Delta(X_{a}))+((T_{a}\alpha_{0}(X_{a})\otp \beta_{0}(a))(\Delta(a))+(\alpha_{0}\otp T_{a}\beta_{0}(X_{a}))(\Delta(a))$$
where $T_{a}\Delta$ is the tangent map of $\Delta$ at $a$ and $\alpha_{0}=R_{a}\alpha, \ \beta_{0}=R_{a}\beta.$
\end{lem}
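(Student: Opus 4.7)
The plan is to right-trivialize the bivector $P$ using the defining relation $\Delta(b) = R_{b^{-1}} P_b$, reduce the function $b \mapsto (\alpha_b \otp \beta_b)(P_b)$ to a product of three smooth maps, and then invoke the Leibniz rule. First I would invert the defining relation to obtain $P_b = (T_e R_b \otp T_e R_b)\Delta(b)$ for $b$ in a neighborhood of $a$, and move the tangent maps across the pairing to write
$$(\alpha_b \otp \beta_b)(P_b) = \bigl((T_e R_b)^{*}\alpha_b \otp (T_e R_b)^{*}\beta_b\bigr)(\Delta(b)) = \bigl(\alpha_0(b) \otp \beta_0(b)\bigr)(\Delta(b)),$$
where $\alpha_0(b) := (T_e R_b)^{*}\alpha_b$ and $\beta_0(b) := (T_e R_b)^{*}\beta_b$ are smooth maps from a neighborhood of $a$ into the Banach space $T_e^{*}G$. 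The identity transferring the tangent map across the pairing holds on elementary tensors and extends by continuity, which is justified by Proposition \ref{UPTP}.

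Next I would view $f(b) := (\alpha_b \otp \beta_b)(P_b)$ as the composition of the smooth map $b \mapsto (\alpha_0(b),\beta_0(b),\Delta(b))$ valued in $T_e^{*}G \times T_e^{*}G \times (T_e G \otp T_e G)$ with the trilinear evaluation pairing. This pairing is a bounded trilinear form between Banach spaces by Proposition \ref{UPTP}, and is therefore smooth. Applying the Leibniz rule at $b=a$ in the direction $X_a$ produces precisely the three terms appearing in the statement: differentiation of the first, second, and third factors yields respectively $(T_a\alpha_0(X_a)\otp\beta_0(a))(\Delta(a))$, $(\alpha_0(a)\otp T_a\beta_0(X_a))(\Delta(a))$, and $(\alpha_0(a)\otp\beta_0(a))(T_a\Delta(X_a))$.

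The main technical obstacle is verifying the smoothness of $\alpha_0$ and $\beta_0$ as $T_e^{*}G$-valued maps, since $b \mapsto (T_e R_b)^{*}\alpha_b$ combines two separate smoothness inputs: the smoothness of the local section $\alpha$ and the smoothness of $b \mapsto T_e R_b$, the latter coming from smoothness of group multiplication in the Banach Lie group sense of \cite{DO}. A secondary point is that Fréchet differentiation must be shown to interact well with the continuous extension $\otp$ of the algebraic tensor product; this is ensured by the boundedness of the pairing together with the universal property in Proposition \ref{UPTP}. Once these smoothness facts are in place, the conclusion reduces to the standard Leibniz rule for a bounded trilinear map, with no additional difficulty introduced by the projective tensor product.
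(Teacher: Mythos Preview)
Your proposal is correct and follows essentially the same approach as the paper, which simply states that the result follows from the Leibniz rule and the chain rule. You have fleshed out precisely the right-trivialization and the bounded-trilinear-pairing argument that make the one-line invocation rigorous.
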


\begin{proof}
The proof follows from the Leibniz rule and the chain law.
\end{proof}

Here is the main result of this section. In the next theorem, we prove that the differential of a projective Banach Poisson-Lie group at the unit element gives rise to a projective Banach Poisson Lie bialgebra structure.

\begin{thm}\label{T2}
 Let $(G,P)$ be a projective Banach Poisson-Lie group and $\Delta:G\to \barwedge^{2} TM$ be the operator given by 
 $$ \Delta(a)=R_{a^{-1}}P_{a},\quad \forall a\in G.$$
 Then $(\mathfrak{g},T_{e} \Delta)$ is a projective Banach Poisson Lie algebra. Here $\mathfrak{g}$ is the Banach Lie algebra of $G$ and $T_{e}\Delta$ is the differential of $\Delta$ at the unit element $e.$
\end{thm}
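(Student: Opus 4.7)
The plan is to verify the two axioms of a projective Banach Lie bialgebra for the bounded operator $\delta := T_{e}\Delta$. The right-trivialization $\Delta(a) = R_{a^{-1}}P_{a}$ realizes $\Delta$ as a smooth map from $G$ into the fixed Banach space $\barwedge^{2}\mathfrak{g} \subset \mathfrak{g}\otp\mathfrak{g}$, so $\delta : \mathfrak{g}\to \mathfrak{g}\otp\mathfrak{g}$ is a bounded linear operator whose image consists of skew-symmetric tensors (which in particular forces $\delta^{*}|_{\mathfrak{g}^{*}\otimes\mathfrak{g}^{*}}$ to be antisymmetric and bounded by $\|\delta\|$).

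For the $1$-cocycle condition, I would differentiate the group-level cocycle of Proposition~\ref{DE}, $\Delta(ab) = \Ad(a)\Delta(b) + \Delta(a)$. Writing this identity for $(a,b)$ and $(b,a)$ with $a = \exp(tx)$, $b = \exp(ty)$ and subtracting gives
\begin{equation*}
\Delta(\exp(tx)\exp(ty)) - \Delta(\exp(ty)\exp(tx)) = [\Ad(\exp(tx))-\id]\Delta(\exp(ty)) - [\Ad(\exp(ty))-\id]\Delta(\exp(tx)).
\end{equation*}
Combining the Baker--Campbell--Hausdorff identity $\exp(tx)\exp(ty)\exp(-tx)\exp(-ty) = \exp(t^{2}[x,y] + O(t^{3}))$ with Corollary~\ref{DEE} ($\Delta(e) = 0$), the left-hand side expands to $t^{2}\delta([x,y]) + O(t^{3})$, while the Taylor expansions $\Ad(\exp(tx)) = \id + t\ad_{x} + O(t^{2})$ and $\Delta(\exp(ty)) = t\delta(y) + O(t^{2})$ give a right-hand side of $t^{2}(\ad_{x}\delta(y) - \ad_{y}\delta(x)) + O(t^{3})$. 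Matching $t^{2}$ coefficients delivers $\delta([x,y]) = \ad_{x}\delta(y) - \ad_{y}\delta(x)$.

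For the Lie bracket condition, set $[\alpha,\beta]_{\mathfrak{g}^{*}} := \delta^{*}|_{\mathfrak{g}^{*}\otimes \mathfrak{g}^{*}}(\alpha\otimes\beta)$ and, for $f,g\in C^{\infty}(G)$, define $\{f,g\}(a) := (df(a)\otp dg(a))(P_{a})$. The crux is the identity
\begin{equation*}
d\{f,g\}|_{e} = [df(e),dg(e)]_{\mathfrak{g}^{*}} \qquad \text{in } \mathfrak{g}^{*},
\end{equation*}
obtained by evaluating Lemma~\ref{DEL} at $a = e$: the two terms involving $\Delta(e)$ vanish by Corollary~\ref{DEE}, leaving only $\langle df(e)\otp dg(e), \delta(X)\rangle$. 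Next, by the Remark following the definition of projective Banach Poisson manifold, \eqref{PJ} forces $\{\cdot,\cdot\}$ to obey the functional Jacobi identity on $C^{\infty}(G)$. Applying $d\cdot|_{e}$ to $\{f,\{g,h\}\} + \{g,\{h,f\}\} + \{h,\{f,g\}\} = 0$ and iterating the key formula twice produces
\begin{equation*}
[df(e),[dg(e),dh(e)]_{\mathfrak{g}^{*}}]_{\mathfrak{g}^{*}} + [dg(e),[dh(e),df(e)]_{\mathfrak{g}^{*}}]_{\mathfrak{g}^{*}} + [dh(e),[df(e),dg(e)]_{\mathfrak{g}^{*}}]_{\mathfrak{g}^{*}} = 0.
\end{equation*}
Since every $\alpha \in \mathfrak{g}^{*}$ is realizable as $df(e)$ for some smooth $f$ (take $\alpha$ composed with a smooth chart centered at $e$, which is legitimate since $T_{e}G$ is the Banach model space), the Jacobi identity propagates to all triples in $\mathfrak{g}^{*}$.

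The main obstacle is the key identity $d\{f,g\}|_{e} = [df(e),dg(e)]_{\mathfrak{g}^{*}}$. One must carefully interpret the right-trivialization notation of Lemma~\ref{DEL} and use $\Delta(e) = 0$ to kill the boundary terms $(T_{e}\alpha_{0}(X)\otp\beta(e))(\Delta(e))$ and $(\alpha(e)\otp T_{e}\beta_{0}(X))(\Delta(e))$. Without this vanishing — which is exactly what distinguishes projective Banach Poisson--Lie groups from generic projective Banach Poisson manifolds — $d\{f,g\}|_{e}$ would depend on second-order jets of $f$ and $g$, and no well-defined operation on $\mathfrak{g}^{*}$ alone could be extracted. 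Consequently, this step is the conceptual core of the theorem: it is where the compatibility of the Poisson bivector with the group structure is transmitted to the Lie-bialgebra structure on $\mathfrak{g}$.
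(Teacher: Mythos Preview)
Your proof is correct and follows essentially the same route as the paper: both use Lemma~\ref{DEL} at $a=e$ together with $\Delta(e)=0$ (Corollary~\ref{DEE}) to obtain the key identity $d_{e}\big((\alpha\otp\beta)(P)\big)=(\alpha(e)\otp\beta(e))(\delta)$, then iterate it inside the Jacobi-type relation coming from~\eqref{PJ}, and both derive the $1$-cocycle condition by differentiating the group cocycle of Proposition~\ref{DE}. The only cosmetic difference is that the paper tests with arbitrary local $1$-forms and works directly with~\eqref{PJ}, whereas you test with differentials of (locally defined) smooth functions and pass through the functional Jacobi identity for $\{\cdot,\cdot\}$; since every $\alpha\in\mathfrak{g}^{*}$ arises as $df(e)$ via a chart at $e$, the two arguments coincide.
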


\begin{proof}
Denote $T_{e} \Delta:\mathfrak{g}\to \mathfrak{g}\otp \mathfrak{g}$ by $\delta$. Let $[\cdot,\cdot]_{\frkg^{\ast}}$ be the operator given by 
\begin{equation*}
	[\alpha_{1},\beta_{1}]_{\frkg^{\ast}}=(\alpha_{1} \otp \beta_{1})(\delta),\quad \forall \alpha_{1},\beta_{1}\in \mathfrak{g}^{\ast},
\end{equation*}
where $\alpha_{1}\otp\beta_{1}$ is the unique extension of $\alpha_{1}\otimes \beta_{1}.$ By the definition, we know that $[\cdot,\cdot]_{\mathfrak{g}^{\ast}}$ is a bilinear map. As $T_{e} \Delta\in \barwedge^{2}TG$ is skew-symmetric, we have $[\cdot,\cdot]_{\mathfrak{g}^{\ast}}$ is also skew-symmetric. Next we show that $[\cdot,\cdot]_{\frkg^{\ast}}$ satisfies the Jacobi identity. By \eqref{PJ}, we get
\begin{equation}\label{JB}
d_{e}((\alpha \otp d((\beta\otp \gamma\big{)}(P) ))(P))+d_{e}((\beta \otp d((\gamma\otp \alpha\big{)}(P) ))(P))+d_{e}((\gamma \otp d((\alpha\otp \beta)(P) ))(P))=0.
\end{equation}

By Corollary \ref{DEE} and Lemma \ref{DEL}, for any $\alpha,\beta\in T^{\ast} G$ we have 
\begin{equation*}
d_{e}((\alpha\otp \beta)(P))=(\alpha(e)\otp \beta(e))(\delta).
\end{equation*}
It follows that 
$$
\begin{aligned}
d_{e}((\alpha \otp d((\beta\otp \gamma)(P) ))(P))&=(\alpha_{1}\otp d_{e}((\beta\otp \gamma)(P) ))(\delta)\\&=(\alpha_{1}\otp (\beta_{1}\otp\gamma_{1}(\delta)))(\delta)\\
&=[[\alpha_{1},\beta_{1}]_{\mathfrak{g}^{\ast}},\gamma_{1}]_{\mathfrak{g}^{\ast}},
\end{aligned}
$$
where $\alpha_{1}=\alpha(e)$ and $\beta_{1}=\beta(e).$
It is similar to obtain that
$$
\begin{aligned}
&d_{e}((\beta \otp d((\gamma\otp \alpha)(P) ))(P))=[[\beta_{1},\gamma_{1}]_{\mathfrak{g}^{\ast}},\alpha_{1}]_{\mathfrak{g}^{\ast}}\\
&d_{e}((\gamma \otp d((\alpha\otp \beta)(P) ))(P))=[[\gamma_{1},\alpha_{1}]_{\mathfrak{g}^{\ast}},\beta_{1}]_{\mathfrak{g}^{\ast}}.\\
\end{aligned}
$$
Then by \eqref{JB}, we know that the Jacobi identity holds. Finally by Proposition \ref{DE}, we have $\delta$ is a 1-cocycle on $\mathfrak{g}^{\ast}.$ Hence $(\mathfrak{g},\delta)$ is a projective Banach Poisson Lie algebra.
\end{proof}

\section{$\mathcal{O}$-operators on Banach Lie algebras}\label{5}

In this section, first we generalize the definition of $\mathcal{O}$-operators to the Banach context. Then we prove that every bounded $\mathcal{O}$-operator on a Banach Lie algebra induces a $1$-coboundary structure. We also use continuous $\mathcal{O}$-operators on Banach Lie algebras to give an equivalent characterization of triangular r-matrices. Finally, we investigate $\mathcal{O}$-operators on the semi-direct product of Banach Lie algebras, and establish their connection with the classical Yang-Baxter equation.

Now we define bounded $\mathcal{O}$-operators on Banach Lie algebras, which is the Banach version of $\mathcal{O}$-operators introduced in \cite{Ku}.

\begin{defi}\label{DO}
Let $\mathfrak{h}$ be a Banach space and $\mathfrak{g}$ be a Banach Lie algebra. Let $\rho:\mathfrak{g}\to \mathcal{B}(\mathfrak{h},\mathfrak{h})$ be a continuous representation. Then $T:\mathfrak{h}\to \mathfrak{g}$ is called a
\textbf{bounded $\mathcal{O}$-operator} on $\mathfrak{g}$ with respect to the representation $(\mathfrak{h},\rho)$ if $T$ is bounded and it satisfies the following condition:
\begin{equation}\label{EDO}
	[T(\alpha),T(\beta)]=T(\rho(T(\alpha))(\beta)-\rho(T(\beta))\alpha  ),\quad \forall \alpha,\beta\in \mathfrak{h}.  
\end{equation}	
 Furthermore, if $\frkh\subset \frkg^{\ast}$, then $T$ is called skew-symmetric if $\alpha(T(\beta))=-\beta(T(\alpha))$ for any $\alpha,\beta\in \mathfrak{h}$.
\end{defi}

We consider \cite[Proposition 2.18]{Ku} in the Banach setting.

\begin{pro}\label {PO}
Let $\mathfrak{g}_+$ be a Banach Lie bialgebra with respect to $\mathfrak{g}_-$ and $T:\mathfrak{g}_+  \to \mathfrak{g}_-$ be a bounded $\mathcal{O}$-operator on $\mathfrak{g}_-$ with respect to $(\mathfrak{g}_- ,-\ad^{\ast}).$ Then $\mathfrak{g}_-$ is a Banach Lie algebra with the Lie bracket given by 	
\begin{equation}\label{OO}
[\alpha,\beta]_{\mathfrak{g}^-}=\ad^{\ast}_{T(\beta)}(\alpha)-\ad^{\ast}_{T(\alpha)}(\beta),\quad \forall \alpha,\beta\in\mathfrak{g}_-.
\end{equation}
\end{pro}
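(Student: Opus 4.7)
The plan is to verify in turn the three defining requirements of a Banach Lie algebra for $(\mathfrak{g}_-, [\cdot,\cdot]_{\mathfrak{g}_-})$: continuity of the bracket, skew-symmetry, and the Jacobi identity.

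For continuity, the defining formula $[\alpha,\beta]_{\mathfrak{g}_-} = \ad^{\ast}_{T(\beta)}\alpha - \ad^{\ast}_{T(\alpha)}\beta$ combines the bounded operator $T$ with the continuous coadjoint action of $\mathfrak{g}_+$ on $\mathfrak{g}_-$ that is built into the definition of a Banach Lie bialgebra. A direct estimate of the form
\begin{equation*}
\|[\alpha,\beta]_{\mathfrak{g}_-}\| \;\le\; 2\,\|\ad^{\ast}\|\,\|T\|\,\|\alpha\|\,\|\beta\|
\end{equation*}
then furnishes the required boundedness. Skew-symmetry is immediate upon swapping $\alpha$ and $\beta$ in the defining formula.

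For the Jacobi identity, my first step would be to rewrite the $\mathcal{O}$-operator equation \eqref{EDO} with $\rho=-\ad^{\ast}$, which produces the compatibility
\begin{equation*}
T\bigl([\alpha,\beta]_{\mathfrak{g}_-}\bigr) \;=\; [T(\alpha), T(\beta)]_{\mathfrak{g}_+},
\end{equation*}
so that $T$ becomes a Lie algebra morphism once $\mathfrak{g}_-$ is endowed with the candidate bracket. Next I would invoke the fact that $-\ad^{\ast}$ is a representation of $\mathfrak{g}_+$, which yields the operator identity $[\ad^{\ast}_{x}, \ad^{\ast}_{y}] = -\ad^{\ast}_{[x,y]_{\mathfrak{g}_+}}$ for all $x,y\in \mathfrak{g}_+$. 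Substituting these two relations into the expansion of $[[\alpha,\beta]_{\mathfrak{g}_-},\gamma]_{\mathfrak{g}_-}$ produces four composition terms of the form $\ad^{\ast}_{T(\cdot)}\ad^{\ast}_{T(\cdot)}(\cdot)$. Summing over the cyclic permutations of $(\alpha,\beta,\gamma)$ then yields twelve summands that cancel pairwise, exactly as in the finite-dimensional argument of \cite{Ku}.

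The only delicate point is the bookkeeping in the cyclic-sum calculation, where one must keep careful track of the sign convention for $\ad^{\ast}$ so that all twelve composition terms vanish. No analytic difficulty arises, since every term lives in $\mathfrak{g}_-$ and all maps involved are bounded; no passage to limits, completions, or projective-tensor subtleties enters the proof.
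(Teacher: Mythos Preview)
Your proof is correct and follows essentially the same approach as the paper's. The paper handles continuity exactly as you do, and for the algebraic part (skew-symmetry and the Jacobi identity) it simply cites \cite[Proposition 2.18]{Ku} rather than spelling out the cyclic-sum cancellation; your proposal just inlines that argument.
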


\begin{proof}
By definition, we know that the coadjoint action of $\mathfrak{g}_+$ is continuous, and $\mathfrak{g}_-$ is preserved by the coadjoint action. Then by \cite[Proposition 2.18]{Ku}, we obtain that the bilinear map given above is a Lie bracket on $\mathfrak{g}_-.$ Finally it follows from the continuity of $\ad^{\ast}$ and $T$ that the bilinear map given in the proposition is continuous, therefore $\mathfrak{g}_-$ is a Banach Lie algebra.
\end{proof}

In the next theorem, we investigate the relationship between skew-symmetric bounded operators on Banach Lie algebras and coboundary Banach Lie bialgebras. 

\begin{thm}\label{T3}
Let $\mathfrak{g}_+$ be a Banach Lie algebra and $\langle \cdot,\cdot\rangle_{\mathfrak{g}_+ , \mathfrak{g}_- }$ be a duality pairing between Banach spaces. Let $\mathfrak{g}_-$ be a Banach Poisson Lie space with respect to $\mathfrak{g}_+$ and $T:\mathfrak{g}_-\to\mathfrak{g}_+ $ be a skew-symmetric bounded $\mathcal{O}$-operator on $\mathfrak{g}_-$ with respect to the representation $(\mathfrak{g}_+ ,-\ad^{\ast}).$ Then $\mathfrak{g}_+$ is a coboundary Banach Lie bialgebra with respect to $\mathfrak{g}_-$ such that the operator $S$ given in Definition \ref{DO} lies in $\Lambda^{2} \mathfrak{g}_+  .$
\end{thm}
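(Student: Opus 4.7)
The plan is to produce an explicit element $S\in\Lambda^{2}\mathfrak{g}_{+}$ from the data of $T$, show that the $1$-cochain induced on $\mathfrak{g}_{+}$ by the $\mathcal{O}$-operator equals $\ad_{\bullet}S$, and then conclude.

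First, I would apply Proposition \ref{PO} (whose proof uses only continuity of the coadjoint action and boundedness of $T$, not the full bialgebra hypothesis there) to equip $\mathfrak{g}_{-}$ with the Banach Lie bracket $[\alpha,\beta]_{\mathfrak{g}_{-}}=\ad^{\ast}_{T(\beta)}\alpha-\ad^{\ast}_{T(\alpha)}\beta$. Dualising and restricting to $\mathfrak{g}_{+}\subseteq\mathfrak{g}_{-}^{\ast}$ yields a candidate $1$-cochain $\theta\colon\mathfrak{g}_{+}\to\Lambda^{2}\mathfrak{g}_{-}^{\ast}$ by $\theta(x)(\alpha,\beta)=\langle x,[\alpha,\beta]_{\mathfrak{g}_{-}}\rangle$, whose skew-symmetry and continuity are inherited from those of the bracket. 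Next, I would define
$$
S(\alpha,\beta):=\langle T(\alpha),\beta\rangle_{\mathfrak{g}_{+},\mathfrak{g}_{-}},\qquad \alpha,\beta\in\mathfrak{g}_{-}.
$$
Skew-symmetry of $T$ immediately gives $S(\alpha,\beta)=-S(\beta,\alpha)$, while boundedness of $T$ and of the duality pairing give continuity. Crucially, $S_{\alpha}=\langle T(\alpha),\cdot\rangle$ is by definition the image of $T(\alpha)\in\mathfrak{g}_{+}$ inside $\mathfrak{g}_{-}^{\ast}$ under the pairing, so $S_{\alpha}\in\mathfrak{g}_{+}$ for every $\alpha$, and hence $S\in\Lambda^{2}\mathfrak{g}_{+}$.

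The core step is the verification of the identity $\theta(x)=\ad_{x}S$ for every $x\in\mathfrak{g}_{+}$. Expanding through the adjoint action on bilinear forms,
$$
\ad_{x}S(\alpha,\beta)=S(\ad^{\ast}_{x}\alpha,\beta)+S(\alpha,\ad^{\ast}_{x}\beta)=\langle T(\ad^{\ast}_{x}\alpha),\beta\rangle+\langle T(\alpha),\ad^{\ast}_{x}\beta\rangle.
$$
The second summand rewrites as $\langle[x,T(\alpha)],\beta\rangle$ via the pairing identity $\langle y,\ad^{\ast}_{z}\gamma\rangle=\langle[z,y],\gamma\rangle$, and skew-symmetry of $T$ turns the first summand into $-\langle T(\beta),\ad^{\ast}_{x}\alpha\rangle=-\langle[x,T(\beta)],\alpha\rangle$. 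Assembling these gives precisely the direct expansion $\theta(x)(\alpha,\beta)=\langle[T(\beta),x],\alpha\rangle-\langle[T(\alpha),x],\beta\rangle$. Since every $1$-coboundary is automatically a $1$-cocycle (from $[\ad_{x},\ad_{y}]=\ad_{[x,y]}$), $\mathfrak{g}_{+}$ is a Banach Lie bialgebra with respect to $\mathfrak{g}_{-}$ whose cocycle $\theta$ is the coboundary of $S\in\Lambda^{2}\mathfrak{g}_{+}$, which is exactly the assertion.

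I expect the main technical obstacle to be the sign bookkeeping between $\ad^{\ast}$ (the algebraic transpose used throughout the paper) and the coadjoint representation $-\ad^{\ast}$ that defines the $\mathcal{O}$-operator structure, together with the two opposing pairing conventions $\mathfrak{g}_{+}\hookrightarrow\mathfrak{g}_{-}^{\ast}$ and $\mathfrak{g}_{-}\hookrightarrow\mathfrak{g}_{+}^{\ast}$. The skew-symmetry of $T$ is precisely what ensures that the two terms in $\ad_{x}S$, which a priori look asymmetric in $\alpha$ and $\beta$, recombine into the two symmetric contributions of $\theta(x)$ arising from the two summands of $[\alpha,\beta]_{\mathfrak{g}_{-}}$.
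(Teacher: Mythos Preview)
Your proposal is correct and follows essentially the same route as the paper: both invoke Proposition~\ref{PO} to obtain the bracket on $\mathfrak{g}_{-}$, define $S(\alpha,\beta)=\langle T(\alpha),\beta\rangle$, and verify $\theta(x)=\ad_{x}S$ via the identical pairing computation (your displayed identity is exactly the paper's equation~\eqref{EO}). Your write-up is in fact slightly more explicit than the paper's, in particular your direct observation that $S_{\alpha}=\langle T(\alpha),\cdot\rangle\in\mathfrak{g}_{+}$ gives $S\in\Lambda^{2}\mathfrak{g}_{+}$, and your parenthetical remark that Proposition~\ref{PO} only needs the Banach Poisson--Lie space hypothesis rather than the full bialgebra structure stated there.
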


\begin{proof}
Let $[\cdot,\cdot]_{\mathfrak{g}_-}$ be the Lie bracket given in \eqref{OO}. It follows from Proposition \ref{PO} that $\mathfrak{g}_-$ is a Banach Lie algebra. For any $x\in \mathfrak{g}_+$ and $\alpha,\beta\in \mathfrak{g}_- $ we have 
\begin{equation*}
\begin{aligned}
\langle x, [\alpha,\beta]_{\frkg_{-}}\rangle_{\mathfrak{g}_+ , \mathfrak{g}_-}&=\langle x,\ad^{\ast}_{T(\beta)}(\alpha)-\ad^{\ast}_{T(\alpha)}(\beta)\rangle_{\mathfrak{g}_+ , \mathfrak{g}_- }=\langle [T(\beta),x],\alpha \rangle_{\mathfrak{g}_+ , \mathfrak{g}_- }-\langle [T(\alpha),x],\beta\rangle_{\mathfrak{g}_+ , \mathfrak{g}_- }\\
&=\langle T(\alpha),\ad^{\ast}_{x}\beta\rangle_{\mathfrak{g}_+ , \mathfrak{g}_- }-\langle T(\beta),\ad^{\ast}_{x}\alpha\rangle_{\mathfrak{g}_+ , \mathfrak{g}_- }.
\end{aligned}
\end{equation*}
Then by the skew-symmetry of $T,$ we have 
\begin{equation}\label{EO}
    \langle x,[\alpha,\beta]_{\frkg_{-}} \rangle_{\mathfrak{g}_+ , \mathfrak{g}_{-}}=\langle  T(\ad^{\ast}_{x} \alpha),\beta \rangle_{\mathfrak{g}_+ , \mathfrak{g}_{-}}+\langle T(\alpha),\ad^{\ast}_{x} \beta\rangle_{\mathfrak{g}_{+} , \mathfrak{g}_{-}}.
\end{equation}
Next define $S:\mathfrak{g}_- \times \mathfrak{g}_-\to \mathbb{K}$ by 
$$
S(\alpha,\beta)=\langle T(\alpha),\beta \rangle_{\mathfrak{g}_+ , \mathfrak{g}_- },\quad \forall \alpha,\beta\in\mathfrak{g}_-.$$ 
Then it follows from $\eqref{EO}$ that $\mathfrak{g}_+$ is a coboundary Banach Lie bialgebra. Finally it follows from \eqref{EO} and the skew-symmetry of $T$ that $S\in \Lambda^2 \mathfrak{g}_+.$  
\end{proof}

The following theorem is the main result of this section. Using bounded $\mathcal{O}$-operators, we give an equivalent characterization of skew-symmetric projective r-matrices.

\begin{thm}
Let $\frkg$ be a Banach Lie algebra and $r\in \frkg\otp\frkg.$ Let $S:\mathfrak{g}^{\ast}\times \frkg ^{\ast}\to\mathbb{K}$ be the operator given by 
$$S(\alpha,\beta)=(\alpha\otp \beta)(r),\quad\forall \alpha,\beta\in \frkg^{\ast},$$ 
where $\alpha\otp \beta$ denote the unique extension of $\alpha\otimes \beta.$ If $r$ is skew-symmetric, then $r$ is a quasi-triangular projective {\rm r}-matrix if and only if the operator $T:\mathfrak{g}^{\ast}\to \mathfrak{g}$ given by 
$$T(\alpha)=S_{\alpha},\quad \forall \alpha\in \mathfrak{g}^{\ast} $$ is a bounded $O$-operator on $\mathfrak{g}$ with respect to the coadjoint action $(\mathfrak{g}^{\ast},-\ad^{\ast})$, where $S_{\alpha}=(\alpha\otp \id)(r).$ 
\end{thm}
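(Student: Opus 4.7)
The strategy is to reduce the equivalence to a single pointwise identity relating the partial contractions of $\hat{\CYB}(r)$ to the $\mathcal{O}$-operator equation, and then to pass from algebraic to projective tensors by continuity. To start, I would verify that $T$ is bounded: by \cite[Proposition 2.8]{RYAN} applied to $r\in\frkg\otp\frkg$, there exist sequences $(x_k),(y_k)\subset\frkg$ with $r=\sum_{k=1}^{\infty}x_k\otimes y_k$ and $\sum_k\|x_k\|\,\|y_k\|<\infty$. Then $T(\alpha)=\sum_k\alpha(x_k)y_k$ converges absolutely in $\frkg$ and satisfies $\|T(\alpha)\|\le\|\alpha\|\sum_k\|x_k\|\,\|y_k\|$, so $\|T\|\le\|r\|_{\pi}$.

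The key step is to establish, for all $\alpha_1,\alpha_2\in\frkg^{\ast}$, the identity
\[
(\alpha_1\otp\alpha_2\otp\id)\hat{\CYB}(r)\;=\;[T(\alpha_1),T(\alpha_2)]+T(\ad^{\ast}_{T(\alpha_1)}\alpha_2)-T(\ad^{\ast}_{T(\alpha_2)}\alpha_1).
\]
I would first verify this on the truncations $r_n=\sum_{k=1}^{n}x_k\otimes y_k\in\frkg\otimes\frkg$, using the notations \eqref{r1}--\eqref{r3}. Setting $T_n(\alpha)=(\alpha\otimes\id)(r_n)$ and $T'_n(\alpha)=(\id\otimes\alpha)(r_n)$, direct rearrangement gives $(\alpha_1\otimes\alpha_2\otimes\id)[r_{12}^{n},r_{13}^{n}]=T_n(\ad^{\ast}_{T'_n(\alpha_2)}\alpha_1)$, $(\alpha_1\otimes\alpha_2\otimes\id)[r_{12}^{n},r_{23}^{n}]=T_n(\ad^{\ast}_{T_n(\alpha_1)}\alpha_2)$, and $(\alpha_1\otimes\alpha_2\otimes\id)[r_{13}^{n},r_{23}^{n}]=[T_n(\alpha_1),T_n(\alpha_2)]$, which yields the identity with $T'_n$ in place of $-T_n$ in the first summand. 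Letting $n\to\infty$: $r_n\to r$ in $\frkg\otp\frkg$, whence $\CYB(r_n)\to\hat{\CYB}(r)$ in $\frkg\otp\frkg\otp\frkg$ by Lemma~\ref{CYBC}; simultaneously $T_n\to T$ and $T'_n\to T'$ in operator norm via the same tail-of-absolutely-convergent-series bound. Since $r$ is skew-symmetric, $T'=-T$, and continuity of the partial contraction, the Lie bracket, and the coadjoint action yields the displayed identity.

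The forward direction is then immediate: if $\hat{\CYB}(r)=0$, the left-hand side of the identity vanishes for every $\alpha_1,\alpha_2\in\frkg^{\ast}$, and rearranging gives exactly the bounded $\mathcal{O}$-operator equation $[T(\alpha_1),T(\alpha_2)]=T(-\ad^{\ast}_{T(\alpha_1)}\alpha_2+\ad^{\ast}_{T(\alpha_2)}\alpha_1)$ of Definition~\ref{DO} with $\rho=-\ad^{\ast}$.

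The reverse direction is where the main obstacle lies. Assuming the $\mathcal{O}$-operator equation, the identity forces $(\alpha_1\otp\alpha_2\otp\id)\hat{\CYB}(r)=0$ in $\frkg$ for all $\alpha_1,\alpha_2$, and pairing with any $\gamma\in\frkg^{\ast}$ gives $(\alpha_1\otp\alpha_2\otp\alpha_3)\hat{\CYB}(r)=0$ for every triple. Upgrading this to $\hat{\CYB}(r)=0$ in $\frkg\otp\frkg\otp\frkg$ requires the simple-tensor functionals $\alpha_1\otp\alpha_2\otp\alpha_3$ to separate points of the projective tensor product. I would handle this by exploiting the total skew-symmetry of $\hat{\CYB}(r)$---obtained by passing the finite-dimensional identity $\CYB(a_n)=-\tfrac{1}{2}[\![a_n,a_n]\!]$ for the skew-symmetric truncations $a_n$ of $r_n$ to the limit, using continuity of $\CYB$ and of the algebraic Schouten bracket, so that $\hat{\CYB}(r)\in\barwedge^{3}\frkg$---combined with a Hahn--Banach style separation argument over the absolutely convergent series representation of elements in $\frkg\otp\frkg\otp\frkg$. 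This separation step is the technical crux of the proof.
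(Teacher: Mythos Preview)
Your forward direction and the core computation coincide with the paper's approach: both pair $\hat\CYB(r)$ against dual elements using the absolutely convergent series representation of $r$ and extract the $\mathcal{O}$-operator identity. You are more explicit about the limiting process (truncating to $r_n$, invoking Lemma~\ref{CYBC}, and passing to the limit), whereas the paper works directly with the infinite-sum Einstein notation; the content is the same.

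You are right to flag the converse as the problematic direction, and your instinct that this is ``the technical crux'' is sound. Knowing that $(\alpha_1\otp\alpha_2\otp\alpha_3)\hat\CYB(r)=0$ for every triple does not, in general, force $\hat\CYB(r)=0$ in $\frkg^{\otp 3}$: the canonical map $P\colon\frkg^{\otp 3}\to\mathcal{L}(\frkg^{\ast},\frkg^{\ast},\frkg^{\ast};\mathbb{K})$ can have nontrivial kernel when $\frkg$ lacks the approximation property---this is precisely the phenomenon recorded in the paper around \eqref{BN}. The paper's own proof of the converse consists of the single sentence ``it is similar to prove'', so the issue is not addressed there either. However, your proposed repair does not work: total skew-symmetry of $\hat\CYB(r)$ is a symmetry constraint on the element, not a separation property of the functionals, and nothing prevents $\Ker(P)$ from containing skew-symmetric elements. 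Hahn--Banach applied to an absolutely convergent series representation gives you only that the induced nuclear trilinear form is zero, which is exactly the hypothesis you already have. A genuine fix would require either an additional assumption on $\frkg$ (e.g.\ the approximation property, under which $P$ is injective) or an argument that bypasses pointwise separation altogether.
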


\begin{proof}
Let $r$ be a triangular projective r-matrix on $\mathfrak{g}.$ There are sequences $(x_n\in \mathfrak{g})$ and $(y_n \in \mathfrak{g})$ such that $\sum_{i=1}^{\infty} \|x_i\| \ \|y
_i\| <\infty$ and $S(\alpha,\beta)=\sum_{i=1}^{\infty}\alpha (x_i)\beta(y_i) .$ Firstly by the definition, we know that $T$ is a bounded operator between Banach spaces $\mathfrak{g}^{\ast}$ and $\mathfrak{g}$. Then using the Einstein notations, we write $r=x_{i}\otimes y^{i}$. We have
$$ [x_i,x_j]\otimes y^{i}\otimes y^{j}+x_i \otimes[y^i , x_j]\otimes y^j + x_i \otimes x_j \otimes [y^i, y^j]=0.$$
As $r$ is skew-symmetric, we get
$$ -[y_i,x_j]\otimes x^{i}\otimes y^{j}+x_i \otimes[y^i , x_j]\otimes y^j + x_i \otimes x_j \otimes [y^i, y^j]=0.$$
It follows that 
$$-\langle x_j \otimes x_i \otimes y^j, \ad^{\ast}_{y^i}\alpha\otimes \beta\otimes\gamma \rangle+\langle x_i \otimes x_j \otimes y^j, \alpha\otimes \ad^{\ast}_{y^{i}}\beta\otimes\gamma \rangle + \langle x_i \otimes x_j \otimes [y^i, y^j],\alpha\otimes \beta\otimes\gamma\rangle=0.$$ Then we find
$$ -(\ad^{\ast}_{\beta(x_i)y_{i}} \alpha)(x_j)\gamma(y^{j}) 
+(\ad^{\ast}_{\alpha(x_i) y^{i} }\beta)(x_j)\gamma(y^j)+\gamma([\alpha(x_i)y^{i},\beta(x_j)y^j]) =0.
$$
By definition, we have
$$-\gamma((\ad^{\ast}_{S_{\beta}} \alpha(x_{j}))y^j)+\gamma((\ad^{\ast}_{S_{\alpha}}\beta(x_j))y^{j})+\gamma([S_{\alpha},S_{\beta}])=0.$$
Then we obtain
$$[T(\alpha),T(\beta)]=T(\ad^{\ast}_{T(\beta)} \alpha-\ad^{\ast}_{T(\alpha)}\beta). $$
Therefore $T$ is an $\mathcal{O}$-operator

Conversely, if the operator $T$ is a bounded $\mathcal{O}$-operator, it is similar to prove that $S$ is a triangular projective r-matrix by the above proof.
\end{proof}

Let $\mathfrak{g}$ be a Banach Lie algebra and $\mathfrak{h}$ be a Banach space. Let $\rho:\mathfrak{g}\to \mathcal{B}(\mathfrak{h},\mathfrak{h})$ be a continuous representation. There is a natural Banach space structure on $\frkg\oplus\frkh$,  where the norm of $\mathfrak{g}\oplus \mathfrak{h}$ is given by 
$$\|x+u\|=\|x\|+\|u\|,\quad\forall u\in \mathfrak{h},\ \forall x\in \mathfrak{g}.$$ 
Then there is a Banach Lie algebra structure on $\mathfrak{g}\oplus \mathfrak{h}$ given by
$$ [x+u,y+v]=[x,y]+\rho(x)v-\rho(y)u,\quad \forall x,y\in \mathfrak{g},\ \forall u,v\in\mathfrak{h}.$$ Denote this Lie algebra by $\mathfrak{g}\ltimes_{\rho} \mathfrak{h}.$

Let $\mathfrak{g}$ and $\mathfrak{h}$ be Banach spaces and $T:\mathfrak{h}\to\mathfrak{g}$ be a linear operator. Then the graph of $T$ is given by
$$\operatorname{Gr}_{T}=\{(T(u),u)\in \mathfrak{g}\oplus \mathfrak{h}:\ \forall u\in\mathfrak{h}\}.$$ 

In the next proposition, we give a graph characterization of bounded $\mathcal{O}$-operators on Banach Lie algebras.

\begin{pro}
Let $\mathfrak{g}$ be a Banach Lie algebra and $\mathfrak{h}$ be a Banach space. Let $\rho:\mathfrak{g}\to \mathcal{B}(\mathfrak{h},\mathfrak{h})$ be a continuous representation. Then a linear operator $T:\mathfrak{h}\to\mathfrak{g}$ is a bounded $\mathcal{O}$-operator on $\mathfrak{g}$ with respect to the representation $(\mathfrak{h},\rho)$ if and only if $\Gr_{\mathfrak{h}}$ is a Banach Lie subalgebra of $\frkg\ltimes_{\rho}\frkh$.
\end{pro}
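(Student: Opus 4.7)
The plan is to reduce the statement to two separate facts: (i) an algebraic equivalence between the $\mathcal{O}$-operator identity and closure of $\operatorname{Gr}_T$ under the bracket of $\mathfrak{g}\ltimes_\rho\mathfrak{h}$, and (ii) a topological equivalence between boundedness of $T$ and closedness of $\operatorname{Gr}_T$ as a subspace of the Banach space $\mathfrak{g}\oplus\mathfrak{h}$. I interpret the ``$\operatorname{Gr}_{\mathfrak h}$'' in the statement as $\operatorname{Gr}_T$.

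First I would unwind the bracket on the semidirect product. For any $u,v\in\mathfrak{h}$, directly from the definition of $[\cdot,\cdot]$ on $\mathfrak{g}\ltimes_\rho\mathfrak{h}$ one gets
\[
\bigl[(T(u),u),(T(v),v)\bigr]=\bigl([T(u),T(v)],\ \rho(T(u))v-\rho(T(v))u\bigr).
\]
This element lies in $\operatorname{Gr}_T$ exactly when its first coordinate equals $T$ applied to its second coordinate, i.e.\ when $[T(u),T(v)]=T\bigl(\rho(T(u))v-\rho(T(v))u\bigr)$, which is precisely identity \eqref{EDO}. Thus $\operatorname{Gr}_T$ is closed under the bracket of $\mathfrak{g}\ltimes_\rho\mathfrak{h}$ if and only if $T$ satisfies the $\mathcal{O}$-operator relation. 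Linearity of $\operatorname{Gr}_T$ as a subspace is automatic from linearity of $T$.

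Next I would handle the analytic part using the closed graph theorem. If $T$ is bounded, then $\operatorname{Gr}_T$ is automatically closed in $\mathfrak{g}\oplus\mathfrak{h}$, hence a Banach subspace, and together with the bracket closure from the previous paragraph it is a Banach Lie subalgebra of $\mathfrak{g}\ltimes_\rho\mathfrak{h}$. Conversely, if $\operatorname{Gr}_T$ is a Banach Lie subalgebra, then in particular it is closed in $\mathfrak{g}\oplus\mathfrak{h}$; since $T$ is a linear map between Banach spaces with closed graph, the closed graph theorem yields boundedness of $T$, and the bracket closure gives \eqref{EDO}. Combining both directions yields the claimed equivalence.

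The main obstacle is essentially bookkeeping rather than substance: one must be careful that ``Banach Lie subalgebra'' is understood to mean a closed Lie subalgebra (so that it inherits a Banach space structure), which is what lets the closed graph theorem be invoked in the reverse direction. All other steps are a direct unpacking of definitions.
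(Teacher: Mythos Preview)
Your proposal is correct and follows essentially the same approach as the paper: both compute the bracket $[(T(u),u),(T(v),v)]$ directly to identify bracket-closure of $\operatorname{Gr}_T$ with the $\mathcal{O}$-operator identity \eqref{EDO}, and both invoke the closed graph theorem to pass between boundedness of $T$ and closedness of $\operatorname{Gr}_T$. Your organization into a purely algebraic part and a purely analytic part is slightly cleaner, and your remark that ``bounded $\Rightarrow$ closed graph'' is the elementary direction (rather than the closed graph theorem proper) is more precise than the paper's phrasing, but there is no substantive difference.
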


\begin{proof}
If $T$ is a bounded $O$-operator, for any $T(u)+u, T(v)+v\in \Gr_{T},$ we have
$$
\begin{aligned}
[T(u)+u,T(v)+v]&=[T(u),T(v)]+\rho(T(u))v-\rho(T(v))u\\
&=T(\rho(T(u))v-\rho(T(v))u)+\rho(T(u))v-\rho(T(v))u\in \Gr_{T}.
\end{aligned}
$$ As $T$ is bounded, then by the closed graph theorem, $\Gr_{T}$ is a Banach subspace of $\frkg\oplus \frkh.$

Conversely, if $\Gr_{\mathfrak{h}}$ is a Banach Lie subalgebra of $\frkg\ltimes_{\rho}\frkh$, then for any $u,v\in \mathfrak{h}$, we have 
$$
\begin{aligned}
[T(u)+u,T(v)+v]=[T(u),T(v)]+\rho(T(u))v-\rho (T(v))u\in \Gr_{T} .
\end{aligned}
$$
It follows that $[T(u),T(v)]=T(\rho(T(u)v-\rho(T(v))u)$. Hence $T$ is an $\mathcal{O}$-operator. Finally by the closed graph theorem, we know that $T$ is bounded. 
\end{proof}

Let $\mathcal{H}$ be a separable Hilbert space and $\langle\cdot,\cdot\rangle$ be its inner product. Then there is a complete orthonormal sequence $(e_i)_{i=1}^{\infty}$ such that for any $u\in \mathcal{H},$ 
$$u=\sum_{i=1}^{\infty}  \langle u,e_i\rangle e_i.$$ And by Bessel's inequality, we know that $\sum_{i=1}^{\infty} \langle u,e_i \rangle\le \|u\|^2.$

Let $\mathfrak{g}$ be a Banach Lie algebra and $\rho:\mathfrak{g}\to \mathcal{B}(\mathcal{H},\mathcal{H})$ be a continuous representation of $\mathfrak{g}$ on $\mathcal{H}.$ As $\mathcal{H}$ is self-dual, the dual representation of $\rho$ is $-\rho^{\ast}:\frkg\to \mathcal{B}(\mathcal{H},\mathcal{H})$, where $\rho^{\ast}$ denote the dual map of $\rho.$ 

For any operator $T\in \mathcal{B}(\mathcal{H},\mathfrak{g})$, as $T$ is continuous, we know  that 
$$T(u)=\sum_{i=1}^{\infty}\langle u,e_i \rangle T(e_i),\quad \forall u\in \mathcal{H}.$$ 
Let
$T_n:\mathcal{H}\to \mathfrak{g}$ be the operator given by $$ T_n(u)=\sum_{i=1}^{n} T(e_i),\quad \forall u\in\mathcal{H}.$$ 
Set  
$r^{T_{n}}=\sum_{i=1}^{n} T(e_{i})\otimes e_{i}-e_{i}\otimes T(e_{i}).$ Then we have $r^{T,n}\in \mathfrak{g}\otimes\mathcal{H}\subseteq (\mathfrak{g}\ltimes_{-\rho^{\ast}} \mathcal{H})\otimes (\mathfrak{g}\ltimes_{-\rho^{\ast}} \mathcal{H})$. One can check that $T_n$ is bounded for any $n>0.$ By \cite{Bai}, we have
$\CYB(r^{T_n})$ is a skew-symmetric solution of the classical Yang-Baxter equation if and only if $T_n$ is an $\mathcal{O}$-operator. And we have $\CYB(r^{T_n})\in \mathfrak{g}\otimes \mathcal{H}\otimes\mathcal{H}+\mathcal{H}\otimes\frkg\otimes\mathcal{H}+\mathcal{H}\otimes\mathcal{H}\otimes\frkg$. As $\mathcal{H}$ is self-dual, we see $\CYB(r^{T_n})\in \mathcal{L}(\mathcal{H},\mathcal{H};\mathfrak{g}).$

Finally, we investigate the relationship between $O$-operators with respect to $(\mathcal{H},-\rho^{\ast})$ and the tensor form of the classical Yang-Baxter equation. The next theorem is a generalization of the result in \cite{Bai}.

\begin{thm}
Let $\frkg$ be a Banach Lie algebra and $\mathcal{H}$ be a separable Hilbert space. Let $T:\mathcal{H}\to \frkg$ be a bounded operator, $\rho:\frkg\to\mathcal{H}$ be a continuous representation of $\frkg$ on $\mathcal{H}$ and $\rho^{\ast}$ be the dual representation of $\rho.$ Then $T$ is an $\mathcal{O}$-operator on $\mathfrak{g}$ with respect to the representation $(\mathcal{H},-\rho^{\ast})$ if and only if $\lim_{n\to\infty} \CYB(r^{T_n})(u,v)=0$ for any $u,v\in \mathcal{H}.$
\end{thm}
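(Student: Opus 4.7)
My plan is to derive a pointwise formula expressing $\CYB(r^{T_n})(u, v)$ in terms of the failure of $T_n$ to satisfy the $\mathcal{O}$-operator identity \eqref{EDO}, and then to pass to the limit using the strong convergence $T_n \to T$. This reduces the stated equivalence to a routine continuity check at each pair $(u, v) \in \mathcal{H} \times \mathcal{H}$.

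First I would expand
\begin{equation*}
\CYB(r^{T_n}) = [r^{T_n}_{12}, r^{T_n}_{13}] + [r^{T_n}_{12}, r^{T_n}_{23}] + [r^{T_n}_{13}, r^{T_n}_{23}]
\end{equation*}
inside $(\frkg \ltimes_{-\rho^{\ast}} \mathcal{H})^{\otimes 3}$, using the semidirect-product bracket $[x+u, y+v] = [x,y] - \rho^{\ast}(x) v + \rho^{\ast}(y) u$ together with the explicit expression $r^{T_n} = \sum_{i=1}^{n} (T(e_i) \otimes e_i - e_i \otimes T(e_i))$. The resulting terms split into a $\frkg$-valued part lying in $\frkg \otimes \mathcal{H} \otimes \mathcal{H} + \mathcal{H} \otimes \frkg \otimes \mathcal{H} + \mathcal{H} \otimes \mathcal{H} \otimes \frkg$ and $\mathcal{H}$-valued pieces that cancel by the skew-symmetry of $r^{T_n}$; this is precisely the content of the cited finite-rank result of \cite{Bai}, re-derived while keeping the continuous variables $u, v$ free. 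Contracting the two $\mathcal{H}$-slots against $(u, v)$ via the inner product, I expect to obtain
\begin{equation*}
\CYB(r^{T_n})(u, v) = [T_n(u), T_n(v)] - T_n\bigl(-\rho^{\ast}(T_n(u)) v + \rho^{\ast}(T_n(v)) u\bigr),
\end{equation*}
which specialises back to Bai's identity when $u = e_k$ and $v = e_\ell$.

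With this formula in hand, I would observe that $T_n = T \circ P_n$, where $P_n$ denotes the orthogonal projection of $\mathcal{H}$ onto $\mathrm{span}(e_1, \dots, e_n)$. Hence $\|T_n(u) - T(u)\| \leq \|T\|\,\|(I - P_n) u\| \to 0$ for every $u \in \mathcal{H}$, and $\|T_n\| \leq \|T\|$ uniformly in $n$. Combining the uniform boundedness of $(T_n)$ with continuity of the Lie bracket on $\frkg$ and of the bounded representation $\rho^{\ast}$, a standard two-sided estimate gives
\begin{equation*}
\lim_{n \to \infty} \CYB(r^{T_n})(u, v) = [T(u), T(v)] - T\bigl(-\rho^{\ast}(T(u)) v + \rho^{\ast}(T(v)) u\bigr)
\end{equation*}
for every $u, v \in \mathcal{H}$. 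The right-hand side vanishes identically in $(u, v)$ if and only if $T$ satisfies \eqref{EDO} with respect to $(\mathcal{H}, -\rho^{\ast})$, which establishes both directions of the equivalence at once.

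The main obstacle is the first step, namely the careful bookkeeping needed to write $\CYB(r^{T_n})$ as a map $\mathcal{H} \times \mathcal{H} \to \frkg$ with the correct signs. One must expand the three commutators in $\CYB(r^{T_n})$ under the semidirect bracket, sort the surviving terms by which tensor slot is $\frkg$-valued, and track the signs coming both from the skew-symmetric form of $r^{T_n}$ and from the use of $-\rho^{\ast}$ in place of $\rho^{\ast}$. Once the pointwise formula is established, the passage to the limit is routine.
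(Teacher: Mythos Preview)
Your proposal is correct and follows essentially the same approach as the paper: expand $\CYB(r^{T_n})$ using Bai's finite-rank identity to express its contraction against $(u,v)$ as the $\mathcal{O}$-operator obstruction for $T_n$, then pass to the limit via the strong convergence $T_n\to T$ and continuity of the bracket and representation. One minor discrepancy to watch for in your bookkeeping is that the paper's contraction over all three $\frkg$-valued slots produces an overall factor of $3$ in the pointwise formula, which is harmless for the equivalence but should appear when you carry out the expansion carefully.
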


\begin{proof}
By the Riesz representation theorem, the dual base of $(e_i)_{i=1}^{\infty}$ is $(e_i)_{i=1}^{\infty}.$ Then by the proof of \cite[Claim 1]{Bai}, we have 
\begin{equation*}
\begin{aligned}
\CYB(r^{T_n})=&\sum_{i,j=1}^{n} ([T_{n}(e_i),T_{n}(e_j)]+T_{n}(\rho(T_{n}(e_j))e_{i})-T_{n}(\rho(T_{n}(e_i))e_{j}))\otimes e_{i}\otimes e_{j}\\
&+\sum_{i,j=1}^{\infty} e_{i}\otimes ([T_{n}(e_i),T_{n}(e_j)]+T_{n}(\rho(T_{n}(e_j))e_{i})-T_{n}(\rho(T_{n}(e_i))e_{j}))\otimes e_{j}  \\
&+\sum_{i,j=1}^{\infty} e_{i}\otimes e_{j}\otimes ([T_{n}(e_i),T_{n}(e_j)]+T_{n}(\rho(T_{n}(e_j))e_{i})-T_{n}(\rho(T_{n}(e_i))e_{j})).
\end{aligned}
\end{equation*}
Hence for any $u,v\in \mathcal{H},$ we have
\begin{equation*}
\begin{aligned}
\CYB(r^{T_n})(u,v)=&\sum_{i,j=1}^{n} 3\langle u,e_{i}\rangle \langle v,e_{j} \rangle ([T_{n}(e_i),T_{n}(e_j)]+T_{n}(\rho(T_{n}(e_j))e_{i})-T_{n}(\rho(T_{n}(e_i))e_{j}))\\
=&\sum_{i,j=1}^{n} 3 ([T_{n}(u),T_{n}(v)]+T_{n}(\rho(T_{n}(v)u))-T_{n}(\rho(T_{n}(u)v)))\in \mathfrak{g}.
\end{aligned}
\end{equation*}
As $[\cdot,\cdot]$ and $\rho$ are both continuous maps, we get
$$
\begin{aligned}
&\sum_{i,j=1}^{\infty} ([T_{n}(u),T_{n}(v)]+T_{n}(\rho(T_{n}(v)u)-T_{n}(\rho(T_{n}(u)v))\\
=&[T(u),T(v)]+T(\rho(T(v)u)-T(\rho(T(u)v).
\end{aligned}
$$
Finally we obtain that $T$ is an $\mathcal{O}$-operator if and only of $\lim_{n\to \infty} \CYB(r^{T_n})=0.$
\end{proof}

\vspace{2mm}
\noindent
{\bf Funding Declaration. } This research is supported by Scientific Research Foundation for High-level Talents of Anhui University of Science and Technology (2024yjrc49).

\smallskip

\vspace{2mm}
\noindent
{\bf Declaration of interests. } The authors have no conflicts of interest to disclose.

\smallskip

\vspace{2mm}
\noindent
{\bf Data availability. } Data sharing not applicable to this article as no datasets were generated or analysed in this work.

\vspace{-.2cm}



\begin{thebibliography}{ab}

\bibitem{MX2024} R. Abedin, S. Maximov, A. Stolin and E. Zelmanov, Topological Lie bialgebras, Manin triples and their classification over $\frkg [\![x]\!]$, {\it Comm. Math. Phys.} {\bf 405}, 5 (2024).


\bibitem{Bai}
C. Bai, A unified algebraic approach to the classical Yang-Baxter equation, \emph{J. Phys. A: Math. Theor.} {\bf 40} (2007), 11073-11082.


\bibitem{B2024}
C. Bai, L. Guo and Y. Hong, Infinite-dimensional Lie Bialgebras via affinization of Novikov bialgebras and Koszul duality, \emph{Comm. Math. Phys.} {\bf 401} (2023), 2011–2049.



\bibitem{CR} V. Chari and A. Pressley, A Guide to Quantum Groups, Cambridge University Press, Cambridge (1994).

\bibitem{DO}  C. Dodson, G. Galanis and E. Vassiliou, Geometry in a Fréchet Context, 
Cambridge University Press, Cambridge(2015).

\bibitem{VGD} V. Drinfel’d, Hamiltonian structures on Lie groups, Lie bialgebras and the geometric meaning of the classical Yang-Baxter equations, {\it Soviet Math. Dokl.}, {\bf 27} (1983), 68–71.

\bibitem{D1} V. Drinfel'd, Quantum Groups, Proceedings of the International Congress of Mathematicians (Berkeley 1986), Berkeley, 789-820, Amer. Math. Soc., 1987.

\bibitem{EK} P.  Etingof and D. Kazhdan, Quantization of Lie bialgebras, I. {\it Selecta Math.} {\bf2} (1996), 1-41.

\bibitem{EK1} P.  Etingof and D. Kazhdan, Quantization of Lie bialgebras, II. {\it Selecta Math.} {\bf4} (1998), 213-231.



\bibitem{F}G. Felder, Conformal field theory and integrable systems associated to elliptic curves, Proceedings of the International Congress of Mathematicians (Basel 1994), Basel, 1247-1255, Birkhäuser, 1994.

\bibitem{GR} A. Grothendieck, Produits tensoriels topologiques et espaces nucléaires, {\it Mem. Amer. Math. Soc.}, {\bf 16}(1955), 140.

\bibitem{GO} I. Golubchikv and V. Sokolov, Compatible Lie brackets and integrable equations of the principal chiral model type, {\it Funct. Anal. Appl.} {\bf 36} (2002), 172–181.

\bibitem{K} Y. Kosmann-Schwarzbach, Lie bialgebras, Poisson Lie groups and dressing transformation, In: Kosmann-Schwarzbach Y., Tamizhmani K.M., Grammaticos B. (eds) Integrability of Nonlinear Systems. Lecture Notes in Physics, vol 638. Springer, Berlin, Heidelberg, 107-173, 2004.



 \bibitem{Ku}
B. A. Kupershmidt, What a classical $r$-matrix really is, \emph{J. Nonlinear Math. Phys.} {\bf 6} (1999), 448-488.

\bibitem{MX} S. Maximov, Infinite-dimensional Lie bialgebras and Manin pairs, Chalmers Tekniska Hogskola (Sweden), 2023.

\bibitem{N} S. Ng and E. Taft, Classification of the Lie bialgebra structures on the Witt and Virasoro algebras, {\it J. Pure
Appl. Algebra} {\bf 151} (2000), 67-88.

\bibitem{O} A. Odzijewicz and T. Ratiu, Banach Lie–Poisson spaces and reduction, \emph{Comm. Math. Phys.} {\bf243} (2003), 1–54.



\bibitem{RYAN} R. A. Ryan, Introduction to tensor products of Banach spaces, Springer, London(2002).



\bibitem{S2} M. A. Semenov-Tian-Shansky, Integrable systems and factorization problems, {\it Operator Theory: Advances and Applications} {\bf141} (2003), 155-218, Birkhauser Verlag Basel.

\bibitem{SK} T. Skrypnyk, Infinite-dimensional Lie algebras, classical r-matrices, and Lax operators: two approaches, {\it J. Math. Phys. } {\bf 54}(2013), 103507.

\bibitem{TP} A. B. Tumpach, Banach Poisson–Lie Groups and Bruhat–Poisson Structure of the Restricted Grassmannian, {\it Comm. Math. Phys.} {\bf 373} (2020), 795–858.






\end{thebibliography}
\end{document}